\documentclass[a4paper,12pt]{article}

\usepackage[utf8]{inputenc}
\usepackage{amsthm,amsmath,stmaryrd,bbm,hyperref,geometry,color,xcolor}
\usepackage{amssymb}
\usepackage[english]{babel}
\usepackage{graphicx}
\usepackage{amsfonts,amssymb}
\usepackage{verbatim}
\usepackage{enumitem}
\usepackage[all]{xy}
\usepackage{soul}
\usepackage{authblk}

\setcounter{tocdepth}{2}
\geometry{hscale=0.8,vscale=0.85,centering}

\newcommand{\po}{\left(}
\newcommand{\pf}{\right)}

\newcommand{\cco}{\llbracket}
\newcommand{\ccf}{\rrbracket}
\newcommand{\E}{\mathbb E}
\newcommand{\R}{\mathbb R} 
\newcommand{\T}{\mathbb T} 
\newcommand{\C}{\mathcal C}

\newcommand{\N}{\mathbb N} 

\newcommand{\M}{\mathcal M}

\newcommand{\dd}{\text{d}}
\newcommand{\na}{\nabla}

\newcommand{\pro}[1]{\mathbb{P}\left(#1~\right)}

\newcommand{\procz}[3]{\mathbb{P}_{#1}\left(#2~\middle|~#3\right)}

\newtheorem{thm}{Theorem}
\newtheorem{assu}{Assumption}
\newtheorem{lem}[thm]{Lemma}

\newtheorem{prop}[thm]{Proposition}

\newtheorem{rem}{Remark}

\selectlanguage{english}

\title{Uniform convergence of the Fleming-Viot process in a hard killing metastable case}

\author[1]{Lucas Journel}
\author[1,2,3]{Pierre Monmarch\'e}
\affil[1]{LJLL, Sorbonne Université, Paris, France}
\affil[2]{LCT, Sorbonne Université, Paris, France}
\affil[3]{Institut universitaire de France (IUF)}

\begin{document}

\maketitle

\abstract{We study the long-time convergence of a Fleming-Viot process, in the case where the underlying process is a metastable diffusion killed when it reaches some  level set. Through a coupling argument, we establish the long-time convergence of the Fleming-Viot process toward some stationary measure at an exponential rate independent of $N$, the size of the system, as well as  uniform in time propagation of chaos estimates. }

\section{Introduction}

Given some open bounded domain $D\subset\R^d$, and some potential $U:\R^d\to \R_+$, we are interested in the process:
\begin{equation}\label{descente}
\dd X_t = -\na U(X_t)\dd t + \sqrt{2\varepsilon}\dd B_t
\end{equation}
with small  $\varepsilon>0$, killed when it reaches $\partial D$ the boundary of $D$. More precisely, write:
\begin{equation}\label{deathtime}
\tau_{\partial D} =\inf\left\{t\geqslant 0, X_t\notin D\right\}. 
\end{equation}
Denote by $\mathcal M^1(D)$ the set of probability measures on $D$, and $\mathbb P_\mu$ the law of the process~\eqref{descente}, with initial condition $\mu\in \mathcal M^1(D)$. Then we say that $\nu\in\mathcal M^1(D)$ is a quasi-stationary distribution (QSD) of the process \eqref{descente} if for all $t\geqslant 0$:
\[\procz{\nu}{X_t\in\cdot}{\tau_{\partial D}>t}=\nu.\]
It is shown in \cite{LelievreLeBris} that, under some mild assumptions on $U$ and $D$, the process \eqref{descente} admits a unique QSD, that we will denote by $\nu_{\infty}^{\varepsilon}$. It is also proven that there is convergence for all initial condition of the law of the process conditioned on its survival toward this QSD, namely, for all $\mu\in\mathcal M^1(D)$,
\[\procz{\mu}{X_t\in\cdot}{\tau_{\partial D}>t}\underset{t\rightarrow+\infty}\longrightarrow \nu_{\infty}^{\varepsilon}.\]
The fact that the process is killed when it exits a domain is classically referred to as a \emph{hard killing} case, by contrast with the \emph{soft killing} case where the process is killed  according to a  inhomogeneous Poisson process, as in \cite{journel20}.

The present work is concerned with the question of sampling the  QSD $\nu_{\infty}^{\varepsilon}$. More precisely, in practice, the QSD is approximated by the empirical measure of a system of interacting particles, called a Fleming-Viot (FV) process, at stationarity. This FV process is defined informally as follows: for a given $N$, let $X^1,\dots,X^N$ be $N$ independent diffusions until one of them reaches $\partial D$. The diffusion that has been killed then branches onto one of the $N-1$ remaining ones, chosen uniformly at random. In very general settings, it  is known  that if the initial condition consists in $N$ independent  random variables distributed according to a common law $\mu$, then for any time $t\geqslant 0$, we have:
\begin{equation}
\label{eq:propchaos}
a.s.\qquad
\pi^N(X^1_t,\dots,X^N_t)\overset{weak}{\underset{N\rightarrow\infty}\longrightarrow} \procz{\mu}{X_t\in\cdot}{\tau_{\partial D}>t},
\end{equation}
(see Section~\ref{RelWor} below) where
\begin{equation}\label{empiri}
\pi^N(x)= \frac{1}{N}\sum_{i=1}^N \delta_{x_i}
\end{equation}
stands for the empirical measure of a vector $(x_1,\dots,x_N)\in D^N$. This would simply be the law of large numbers if the particles were independent, which they are not due to the resurrection mechanism. For mean-field interacting particle systems as the FV process, such a convergence is known as a propagation of chaos phenomenon.

Two questions are addressed in this work. First, the long-time relaxation of the FV process toward its invariant measure: a quantitative convergence in the total variation distance sense at a rate independent from $N$ is stated in Theorem~\ref{thm}. Second, the propagation of chaos: Theorem~\ref{thm2} gives a quantitative version of \eqref{eq:propchaos}, with a bound uniform in time. Combining both results yields a quantitative estimate for the convergence  of the empirical measure of the FV process toward the QSD as $N,t\rightarrow \infty$.

As detailed below, these results are established under the condition that $D$ is a metastable state for the diffusion \eqref{descente}, in the sense that the mixing time of  \eqref{descente} within $D$ is shorter than the typical exit time from $D$. Mathematically speaking, this is reflected by the fact that $c^*$, the critical height within $D$, is smaller than $U_0$, the height of the boundary $\partial D$ (see below for the definition of $c^*$ and $U_0$), and the temperature $\varepsilon$ is small enough. This metastable context is typically the one where QSD are of interest, since in that case  the (non-conditional) law of the process is close to the QSD for times in intermediary scales between the mixing time and the extinction time. Moreover, it is exactly the context of some algorithms in molecular dynamics, such as the parallel replica algorithm presented in \cite{LelievreLeBris}, which involves the sampling of the QSD. In fact, for technical reasons, we will work under a stronger condition than simply $c^*<U_0$ (see condition~\eqref{strongdeathprob} in Assumption~\ref{assu3} below), which is also related to the metastability of $D$. While we haven't succeeded in this endeavor, we think  that the proof may possibly be modified to work only with the condition $c^*<U_0$, without the additional condition. As a particular case, let us notice that we don't need the additional condition when $d=1$ (see Lemma~\ref{unifexitevent}).

The paper is organized as follows. In the rest of this introduction, the main results are stated in Section~\ref{main} and discussed in view of previous related works in Section~\ref{RelWor}. Some preliminary properties of the FV process are studied in Section~\ref{sec:proof}, and the main theorems are proven in Section~\ref{conclu}. Finally, we prove in Section~\ref{technique} the technical lemma
which involves the additional condition.



\subsection{Main Result}\label{main}

Define the critical height $ c^* =c^*(U)$ of $U$ as $c^{\ast} = \sup_{x_1,x_2\in D}c(x_1,x_2)$ with 
\[c(x_1,x_2) = \inf\left\{\max_{0\leqslant t \leqslant1}U(\xi(t))-U(x_1)-U(x_2) \right\},\]
where the infimum runs over $\left\{ \xi\in\mathcal{C}\left(\left[0,1\right], D\right),\xi(0)=x_1, \xi(1)=x_2 \right\}$. The critical height $c^*$ represents the largest energy barrier the process has to cross in order to go from any local minimum  to any global one (within $D$).

The following conditions are enforced throughout all this work.

\begin{assu}\label{assu1}
\begin{itemize}
	\item $D\subset\R^d$ is open, bounded, connected and its boundary is $\mathcal C^2$.
	\item $U:\R^d \to \R_+$ is smooth on some neighborhood of $\overline{D}$.
	\item $\min_{D}U=0$ and   \begin{equation}\label{u0}
	    U_0 :=\min_{\partial D}U >c^* . 
	\end{equation}
	\item For $x\in\partial D$, denote by $n(x)$ the outward normal to $D$. For all $x\in \partial D$, 
	\begin{equation}\label{return}
	n(x) \cdot \na U(x) > 0.
	\end{equation}	 
\end{itemize}
\end{assu}

The condition $\min_{D}U=0$ is just a choice of normalisation since the process is unchanged if a constant is added to $U$. Under Assumption~\ref{assu1}, neglecting sub-exponential terms, for small $\varepsilon$, the mixing rate of the non-killed process~\eqref{descente} is known to be of order $e^{c^*/\varepsilon}$ (see \cite{HoStKu}) while,  according to the theory of Freidlin-Wentzell (see \cite{Freidlin-Wentzell}),  the exit time $\tau_{\partial D}$  is of order $e^{U_0/\varepsilon}$. As already mentioned, the condition $U_0>c^*$  thus describes a difference of timescales between the mixing time and the death time. More precisely, it is known that for any neighborhood $\mathfrak B_1$ of $\partial D$ and any $a<U_0$,
 $\sup_{x\in D\setminus \mathfrak B_1}\mathbb P_x(\tau_{\partial D}<e^{a/\varepsilon})$ vanishes with $\varepsilon$. In fact, we will need an even stronger uniformity in terms of the initial condition. For now, let us state it as an  assumption. We denote by $\varphi_t$ the flow associated to the deterministic gradient descent 
 \begin{equation}\label{def:flow}
 \partial_t \varphi_t(x) = -\na U \po \varphi_t(x)\pf,\quad \varphi_0(x) = x, 
 \end{equation}
 and $\varphi_t(\overline{D})$ the image of $\overline{D}$ by $\varphi_t$.  Notice that \eqref{return} implies that $\varphi_t(\overline{D}) \subset  D$ for $t>0$.
For a fixed Brownian motion $(B_t)_{t\geqslant 0}$, we denote $x\mapsto (X_t^x)_{t\geqslant 0}$ the stochastic flow associated to~\eqref{descente}, namely $(X_t^x)_{t\geqslant 0}$ solves the SDE with initial condition $x$ for all $x\in D$ and for all $T>0$, almost surely, $x\mapsto (X_t^x)_{t\in[0,T]}$ is continuous (for the topology of uniform convergence, see \cite[Theorem 37]{Protter} for the well-posedness of this flow).


\begin{assu}\label{assu2}
There exist $a>c^*$, $T_0>0$ and $\mathcal N$ a  neighborhood of $\varphi_{T_0}(\overline{D})$ such that, 
     denoting by    $\tau_{\partial D}(X^x)$ the first exit time from $D$ of  $(X_t^x)_{t\geqslant 0}$, we have 
    \begin{equation}\label{strongdeathprob_assum}
\mathbb P\left(\exists x\in \mathcal N, \tau_{\partial D}(X^x)<e^{a/\varepsilon}\right)\underset{\varepsilon\rightarrow 0}{ \longrightarrow} 0.
\end{equation}
\end{assu}

Notice that, if $(D_n)_{n\in\N}$ is an increasing sequence of sets whose union is $D$ and such that the distance between $D_n$ and $D^c$ is positive for all $n\in\N$, thanks to the continuity of $x\mapsto (X_t^x)_{t\in[0,T]}$ for any $T>0$, we may write
\[
\left\{ \exists x\in \mathcal N, \tau_{\partial D}(X^x)<e^{a/\varepsilon} \right\} = \cap_{n\in \N}\cup_{x\in \mathcal N\cap \mathbb Q^d} \left\{ \tau_{\partial D_n }(X^x) < e^{a/\varepsilon} \right\},
\]
so that the left-hand side is measurable (as a countable intersection of a countable union of event), and~\eqref{strongdeathprob_assum} makes sense.

We are able to prove that  \eqref{strongdeathprob_assum} is implied by the following condition, which in dimension $d>1$ strengthens~\eqref{u0}:

\begin{assu}\label{assu3}
One of the following is satisfied:
\begin{itemize}
    \item $d=1.$
    \item $c^*<(U_0-U_c)/2$, where \[ U_c = \lim_{t\rightarrow \infty} \sup_{x\in\partial D} U \po \varphi_t(x)\pf\,. \]
\end{itemize}
\end{assu}
Notice that $U_c$ is well-defined, as $t\mapsto \sup_{x\in\partial D} U \po \varphi_t(x)\pf $ is non-increasing and lower bounded by $0$.

As stated in Lemma~\ref{unifexitevent} below, Assumptions \ref{assu1} and \ref{assu3} together imply \ref{assu2}. 
However we don't think that Assumption~\ref{assu3} is sharp, and thus we state our main results in terms of Assumption~\ref{assu2}. 

We define the semi-group $(P_t)_{t\geqslant0}$ associated to a Markov process  $(X_t)_{t\geqslant 0}$ in $\R^d$ by:
\[P_tf(x)=\E_x(f(X_t))\]
for any bounded measurable function $f:\R^d\rightarrow \R$ and any $t\geqslant 0$, where $\E_x$ stands for the expectation under $\mathbb{P}_x=\mathbb{P}_{\delta_x}$. We denote $\cco 1,N\ccf = \left\{1,\cdots,N\right\}$.

Now, let us define rigorously the FV process, starting from some initial condition $\mu\in\mathcal M^1(D^N)$. Let $(I^i_n)_{1\leqslant i\leqslant N,n\in\mathbb N}$ be a family of independent random variables, where for $i\in\cco 1,N\ccf$, $I^i_n$ is uniform on $i\in\cco 1,N\ccf\setminus\left\{i\right\}$. Let $(B^i)_{i\in\cco 1,N\ccf}$ be $N$ independent Brownian motions, and $\textbf{X}_0=(X_0^1,\dots,X_0^N)$ be distributed according to $\mu$. Define $\bar X^i$ as the solution to:
\[\bar X^i_t = X_0^i -\int_0^t \na U(\bar X^i_s) \dd s + \sqrt{2\varepsilon}B^i_t\] 
and set
\[\tau_1=\min_i\inf\left\{t\geqslant 0, \bar X^i_t\notin D\right\}.\]
Then, denote by $i_1$  the index of the particle which exits the domain at time $\tau_1$. It is uniquely defined almost surely because, since the hitting time of the boundary has a density on $\R_+$, the probability that two particles hit the boundary at the same time is zero (this is true for the Brownian motion, and the general case follows from an application of the Girsanov theorem). For $i\neq i_1$, $0\leqslant t \leqslant \tau_1$, or $i=i_1$ and $1\leqslant t < \tau_1$, simply let:
\[X^i_t = \bar X^i_t \qquad \text{and}\qquad  X^{i_1}_{\tau_1}= \bar X^{I_1^i}_{\tau_1}.\]
This defines the process between times $0$ and $\tau_1$. The process is then defined on $\left(\tau_1,\infty\right)$ by induction: if the process is defined up to time $\tau_{n-1}$, we define it between time $\tau_{n-1}$ and $\tau_n$ in the same way, with $\textbf{X}_0$ replaced by $\textbf X_{\tau_{n-1}}$, $i_1$ by $i_n$ the index of the particle that hits $\partial D$ at time $\tau_n$, and $I_1^{i_0}$ by $I_n^{i_n}$. Thus, $(\tau_n)_n$ is the sequence of branching times of the process. 

Under Assumption~\ref{assu1}, the FV process $\textbf X = (X^1,\dots,X^N)$ is well-defined and does not explode in finite time, meaning that $\sup_n \tau_n =\infty$ almost surely, see \cite[Theorem 2.1]{villemonais2010interacting}. This defines a Markov process, and we denote by $P^{N,\varepsilon}=(P_t^{N,\varepsilon})_{t\geqslant 0}$ the associated semi-group.

A law $\mu\in\mathcal M^1(D^N)$ is said to be exchangeable if it is invariant by any permutation of the particles, i.e. $(X^{\sigma(i)})_{i\in\cco 1,N\ccf} \sim \mu$ if $(X^{i})_{i\in\cco 1,N\ccf} \sim \mu$ for all permutations $\sigma$ of $\cco 1,N\ccf$. For $k\in\cco 1,N\ccf$, we denote by $\mu^k\in\mathcal M^1(D^k)$ the marginal law of the $k$ first particles under $\mu$ (which, for exchangeable laws, is thus the marginal law of any subset of $k$ particles).

Our first main result concerns the long time behavior of the FV process. 

\begin{thm}\label{thm}
	Under Assumptions~\ref{assu1} and \ref{assu2}, there exist $\varepsilon_0 ,c,C>0$ such that, for all $\varepsilon\in(0,\varepsilon_0]$, $N\in\mathbb{N}$, $t\geqslant 0$, setting $t_{\varepsilon} = e^{a/\varepsilon}$, the following holds.
	\begin{enumerate}
	    \item For all $\mu,\nu\in\mathcal{M}^1(D^N)$, 
	\[\|\mu P^N_{t}-\nu P^N_{t}\|_{TV} \leqslant CN(1-c)^{t/t_\varepsilon}. \]
	\item The semi-group $P^{N,\varepsilon}$ admits a unique invariant measure $\nu^{N,\varepsilon}_\infty$, which is exchangeable. 
	\item For all exchangeable $\mu,\nu\in\mathcal M^1(D^N)$, for all $k\in\cco 1,N\ccf$, 
	\[\|(\mu P^{N}_{t})^k-(\nu P^{N}_{t})^k\|_{TV} \leqslant Ck(1-c)^{t/t_\varepsilon}. \]
	\end{enumerate}
\end{thm}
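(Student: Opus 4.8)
The plan is to establish the three statements via a single coupling construction for two copies of the FV process, building on Assumption~\ref{assu2} to control the killing events uniformly in the initial configuration. Fix a time horizon of one unit $t_\varepsilon = e^{a/\varepsilon}$. The key quantitative input I would isolate first is a ``coalescence in one step'' estimate: there exists a constant $c>0$, independent of $N$, such that for \emph{any} two initial configurations $\mathbf{x},\mathbf{y}\in D^N$, one can construct a coupling of $\mathbf{X}$ started at $\mathbf{x}$ and $\mathbf{Y}$ started at $\mathbf{y}$ such that with probability at least $c$, all particles of the two systems have coalesced (pairwise, under a fixed matching of indices) by time $t_\varepsilon$. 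Granting this, statement (1) follows by the standard iteration/Lindvall-type argument: on the event of coalescence the two processes agree forever after, so $\|\mu P^N_{t}-\nu P^N_t\|_{TV}$ decays like $(1-c)^{\lfloor t/t_\varepsilon\rfloor}$; the factor $N$ in front absorbs the possibility that one needs an initial step (or a union bound over particles) before the ``uniform in initial condition'' regime kicks in. Statement (2) is then immediate: contraction in total variation gives a Cauchy sequence $\mu P^N_{nt_\varepsilon}$, hence a unique fixed point $\nu^{N,\varepsilon}_\infty$; exchangeability of the limit follows because the dynamics commute with permutations of the particle labels, so if $\mu$ is exchangeable so is $\mu P^N_t$, and one may take $\mu$ exchangeable in the approximating sequence. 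Statement (3) is a marginalization of the coupling: if the full coupling has coalesced then in particular the first $k$ coordinates agree, and a more careful accounting — coupling only the particles needed to control $k$ marginals, or simply restricting the TV bound of (1) to a $k$-dimensional marginal while replacing the $N$ by $k$ in the pre-factor — yields the $Ck(1-c)^{t/t_\varepsilon}$ bound.

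The coupling itself I would build in two phases within a single interval of length $t_\varepsilon$. \textbf{Phase one (entering a good set).} Using reflection coupling (or synchronous coupling once close, combined with reflection to get close) for the $N$ diffusions, together with Assumption~\ref{assu2} which guarantees that with probability $1-\bar p_\varepsilon\to 1$ \emph{no} particle starting in $D\setminus B_1$ is killed before time $e^{a/\varepsilon}$, I would argue that after a sub-$t_\varepsilon$ time all particles of both systems lie in a fixed compact ``core'' region $K\subset D$ away from $\partial D$ where $U$ is below level $c^*$-ish, with probability bounded below uniformly in $N$ and in the initial conditions. Here one exploits that the drift $-\nabla U$ pushes particles inward (condition~\eqref{return}) and that the mixing time inside $D$ is $\approx e^{c^*/\varepsilon}\ll e^{a/\varepsilon}$; the branching mechanism only helps, since a killed particle is immediately resurrected onto a surviving one, which (on the good event) is already in the core. \textbf{Phase two (coalescence from the good set).} Once both systems have all $N$ particles in $K$, I would use a one-shot coupling: over a further time of order $1$ (much less than $t_\varepsilon$), reflection-couple the matched pairs $(X^i,Y^i)$; since $K$ is a fixed compact set independent of $N$ and the noise intensity is fixed at $\sqrt{2\varepsilon}$, each pair has a probability bounded below (depending on $\varepsilon$ but not $N$) of coalescing, and — crucially — one can couple all $N$ pairs \emph{simultaneously} so that they all coalesce with probability bounded below independently of $N$, provided no killing occurs in this window (again controlled by Assumption~\ref{assu2}). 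The point is that ``all $N$ pairs coalesce at once'' need not cost an exponentially small probability: one can, for instance, couple the driving Brownian motions so that the event ``pair $i$ has not coalesced by the end of the window'' forces a large deviation for a single scalar functional, and arrange these to be (nearly) simultaneous; alternatively one couples the two empirical measures directly. Putting phases one and two together gives the claimed $c$.

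The main obstacle, and the part I expect to require the most care, is \textbf{getting the one-step coalescence probability $c$ to be independent of $N$}. The naive bound — coalesce particle $1$, then particle $2$, etc. — gives $c^N$, which is useless. The resolution must use the mean-field/exchangeable structure: because all particles see the same confining landscape and, after phase one, are all in the same small region $K$, the ``hard'' randomness is low-dimensional — essentially one needs the two empirical measures to coincide, and once the systems are in $K$ one can drive this with a coupling whose success does not degrade with $N$ (intuitively, reflection coupling in each coordinate where the ``meeting'' events can be made to happen together because the processes are already very close after the contractive phase one, so the remaining reflection-coupling ``clocks'' are all short and can be synchronized). A secondary technical nuisance is the interaction between the branching times $\tau_n$ and the coupling: a resurrection event resets a particle's position discontinuously, which could break a reflection coupling in progress; this is handled by conditioning on the good event of Assumption~\ref{assu2} where, on the relevant time scale $t_\varepsilon$, the only particles ever killed are those that have wandered into $B_1$, an event one shows (via the same assumption, possibly together with Lemma~\ref{unifexitevent}) has small probability once all particles are in the core $K$. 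I would also need to verify that the pre-factor is genuinely linear in $N$ (resp.\ $k$) rather than worse, which comes from a union bound over the single ``bad first step'' for each particle, or from tracking the one-dimensional marginals directly.
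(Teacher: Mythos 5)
Your proposal contains a genuine gap, and the gap is precisely the part you flagged as the ``main obstacle.'' You propose to prove a one-step coalescence estimate: with probability at least $c>0$, independent of $N$, \emph{all} $N$ pairs have coalesced by time $t_\varepsilon$. This cannot hold. The pairs $(X^i,Y^i)$ are driven by independent Brownian motions and, apart from rare branching events, evolve independently; hence the probability that all $N$ pairs coalesce in a fixed window is essentially a product $\prod_i\mathbb{P}(X^i_{t_\varepsilon}=Y^i_{t_\varepsilon})$ and decays geometrically in $N$. The mean-field structure you invoke does not rescue this, because the marginals of each pair are pinned down and the driving noises are independent, so there is no freedom to ``synchronize'' the reflection-coupling clocks across $i$; and the observation that the event depends on low-dimensional functionals of the empirical measure does not lift its probability. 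Your Phase one has the same issue: Lemma~\ref{boundary} in the paper controls the \emph{fraction} of particles near $\partial D$, not that all of them enter a compact core simultaneously. Finally, the claim that ``one needs an initial step\dots before the uniform regime kicks in,'' explaining the $N$ in the prefactor, misreads the bound: the prefactor $N$ is not a one-time initialization cost but is intrinsic to the whole mechanism, exactly as for $N$ \emph{independent} diffusions, where the $N$-fold product structure makes the total-variation distance behave like $N\rho^{t/t_\varepsilon}$ rather than $\rho^{t/t_\varepsilon}$.

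The paper's route avoids this entirely. Rather than asking for all pairs to coalesce in one shot, it constructs a pairwise coupling (with synchronized resurrection indices) and proves a \emph{contraction in expectation} of a tailored distance on $D^N$,
\[
\mathbf d(\mathbf x,\mathbf y)=\sum_{i=1}^N \mathbbm{1}_{x_i\neq y_i}\bigl(1+\beta V(x_i)+\beta V(y_i)\bigr)+(1+V_0)N\bigl(\mathbbm{1}_{A(\mathbf x)>\alpha N}+\mathbbm{1}_{A(\mathbf y)>\alpha N}\bigr)\mathbbm{1}_{\mathbf x\neq\mathbf y},
\]
in the spirit of Hairer--Mattingly. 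The first (pairwise) part tracks the \emph{expected number} of decoupled pairs, weighted by a Lyapunov function that penalizes proximity to $\partial D$; Lemma~\ref{coupkilled} contracts the terms with $x_i\neq y_i$ in the interior, Lemma~\ref{decreaselya} contracts the terms near the boundary via the Lyapunov decay, and Lemma~\ref{deathprobability} controls the expected \emph{increase} from new decouplings (a coupled pair dying and resurrecting on a decoupled one). The second (global) term handles the bad initial case where many coupled particles start near the boundary, using Lemma~\ref{boundary}. After one step of length $t_\varepsilon$ the expected $\mathbf d$-distance shrinks by a factor $1-c$, and $\mathbf d(\mathbf x,\mathbf y)\leqslant C N$ supplies the $N$-prefactor in the TV bound, while the marginal bound of part (3) follows by exchangeability and averaging $\mathbbm{1}_{X^i_t\neq Y^i_t}$ over $i$. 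If you want to salvage the shape of your argument you should replace ``$\mathbb P(\text{all pairs coalesce})\geqslant c$'' by ``$\E[\#\{i:X^i_{t_\varepsilon}\neq Y^i_{t_\varepsilon}\}]\leqslant(1-c)\,\#\{i:x_i\neq y_i\}$'' (suitably weighted by a Lyapunov function), which is the statement the paper actually establishes and which is compatible with the geometric-in-$N$ loss you correctly noticed.
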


Our second result is a uniform in time propagation of chaos estimate.

\begin{thm}\label{thm2}
Under Assumptions~\ref{assu1} and  \ref{assu2}, there exists $\varepsilon_0>0$ such that for all compact set $K\subset D$, all $\varepsilon\in(0,\varepsilon_0]$, there exist $C_\varepsilon,\eta_\varepsilon>0$ such that for all   $\mu_0\in \mathcal{M}^1(D)$ satisfying $\mu_0(K)\geqslant 1/2$, all bounded $f:D\to\R_+$ and all $N\in\N$, 
\begin{equation*}
\sup_{t\geqslant0} \E\left( \left|\int_D f\dd \pi^N(\mathbf{X}_t) - \E_{\pi^N(\mathbf X_0)}\left(f\left(X_t\right)\middle|\tau_{\partial D} >t\right) \right| \right) \leqslant \frac{C_\varepsilon\|f\|_{\infty}}{N^{\eta_\varepsilon}},
\end{equation*}
where $X$ solves \eqref{descente},  $\tau_{\partial D}$ is defined in \eqref{deathtime}, $\pi^N$ in \eqref{empiri}, and $\textbf X$ is a FV-process with initial condition $\mu_0^{\otimes N}$.
\end{thm}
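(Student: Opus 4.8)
The core strategy is to combine the long-time ergodicity of the FV process (Theorem~\ref{thm}) with the long-time convergence of the conditioned diffusion toward the QSD, using a triangle-inequality argument that splits the time axis into a "short-time'' regime where propagation of chaos follows from a direct (time-dependent) estimate, and a "long-time'' regime where both the particle system and the limiting conditioned semigroup have forgotten their initial data. More precisely, I would first establish a \emph{finite-time} propagation of chaos bound: there is a function $\varepsilon\mapsto (C_\varepsilon, q_\varepsilon)$ such that for every $t$,
\[
\E\left(\left|\int_D f\,\dd\pi^N(\mathbf X_t) - \E_{\pi^N(\mathbf X_0)}\left(f(X_t)\,\middle|\,\tau_{\partial D}>t\right)\right|\right) \leqslant C_\varepsilon \|f\|_\infty\, e^{q_\varepsilon t}\, N^{-1/2}.
\]
This is the standard FV propagation-of-chaos estimate on compact time intervals; it can be obtained by a martingale/coupling computation controlling the difference between the empirical flow and the nonlinear (conditioned) flow, where the factor $e^{q_\varepsilon t}$ comes from Gronwall applied to the rate at which the resurrection mechanism injects fluctuations (the rate of killing being controlled, away from $\partial D$, using Assumption~\ref{assu2} to keep the particle cloud away from the boundary with high probability up to time $t_\varepsilon = e^{a/\varepsilon}$). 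The constant $q_\varepsilon$ will blow up as $\varepsilon\to 0$, but that is harmless.

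Next I would use Theorem~\ref{thm} and the analogous contraction for the conditioned semigroup. For the limiting object, the result of \cite{LelievreLeBris} together with the quantitative Freidlin--Wentzell/metastability estimates underlying Assumption~\ref{assu2} gives an exponential rate $\rho_\varepsilon>0$ such that $\|\mathbb P_{\mu}(X_t\in\cdot\mid \tau_{\partial D}>t) - \nu_\infty^\varepsilon\|_{TV} \leqslant C_\varepsilon e^{-\rho_\varepsilon t}$ uniformly over initial laws $\mu$ with $\mu(K)\geqslant 1/2$ (the mass condition on the compact $K$ is exactly what rules out initial conditions concentrated near $\partial D$, for which no such uniform rate can hold). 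On the particle side, Theorem~\ref{thm}(3) with $k=1$ gives $\|(\mu_0^{\otimes N}P^N_t)^1 - \nu^{N,\varepsilon}_\infty{}^1\|_{TV} \leqslant C(1-c)^{t/t_\varepsilon}$, and one also needs that the marginal of $\nu^{N,\varepsilon}_\infty$ is close to $\nu_\infty^\varepsilon$ — this last fact follows by letting $t\to\infty$ in the finite-time propagation of chaos bound applied at a well-chosen intermediate time, which yields $\|\nu^{N,\varepsilon}_\infty{}^1 - \nu_\infty^\varepsilon\|_{TV}\leqslant C_\varepsilon N^{-\eta_\varepsilon}$ for some $\eta_\varepsilon>0$ (this is the standard "stationary propagation of chaos from dynamic propagation of chaos + uniform ergodicity'' bootstrap).

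Finally I would assemble these pieces: fix $N$, choose a cutoff time $T_N$ of the form $T_N = \theta_\varepsilon \log N$ with $\theta_\varepsilon$ small enough that $e^{q_\varepsilon T_N} \leqslant N^{1/4}$, so that on $[0,T_N]$ the finite-time bound gives an error $\lesssim N^{-1/4}$. For $t\geqslant T_N$, write the quantity to be bounded as the difference of two terms each compared to its equilibrium: $\int_D f\,\dd\pi^N(\mathbf X_t)$ has expectation close to $\int f\,\dd\nu^{N,\varepsilon}_\infty{}^1$ by Theorem~\ref{thm}(3) (up to $C(1-c)^{T_N/t_\varepsilon}\leqslant N^{-\eta'_\varepsilon}$ for suitable $\theta_\varepsilon$), and $\E_{\pi^N(\mathbf X_0)}(f(X_t)\mid\tau_{\partial D}>t)$ is close to $\int f\,\dd\nu_\infty^\varepsilon$ by the conditioned-semigroup contraction (again polynomially small in $N$ after the logarithmic time), and the two equilibria differ by $\leqslant C_\varepsilon N^{-\eta_\varepsilon}\|f\|_\infty$; one subtlety is that the conditioned semigroup must be applied with the \emph{random} initial law $\pi^N(\mathbf X_0)$, but since $\mathbf X_0\sim\mu_0^{\otimes N}$ and $\mu_0(K)\geqslant 1/2$, a law of large numbers argument shows $\pi^N(\mathbf X_0)(K)\geqslant 1/2$ with probability $\geqslant 1-e^{-cN}$, so the uniform rate $\rho_\varepsilon$ applies off an exponentially negligible event. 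Taking $\eta_\varepsilon$ to be the minimum of the exponents produced at each step finishes the proof. The main obstacle, and the place requiring the most care, is the finite-time estimate with explicit (even if $\varepsilon$-dependent) exponential time-growth: one must control the fluctuations of the resurrection times, which requires keeping the whole particle system away from $\partial D$ on $[0,t_\varepsilon]$ with overwhelming probability — this is precisely where Assumption~\ref{assu2} (the strong uniform-in-initial-condition exit estimate) enters, and where the preliminary results of Section~\ref{sec:proof} on the FV process are needed.
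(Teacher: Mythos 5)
Your proposal is essentially the same approach as the paper's proof: a triangle inequality at an intermediate time $T_N=\theta_\varepsilon\log N$, combining (i) a finite-time propagation-of-chaos estimate that degrades exponentially in $t$, (ii) exponential convergence of the conditioned semigroup to the QSD, (iii) the $N$-uniform contraction of Theorem~\ref{thm}, and (iv) a stationary propagation-of-chaos bound obtained by bootstrapping (i)--(iii) from the invariant measure, together with a law-of-large-numbers control that $\pi^N(\mathbf{X}_0)(K)\geqslant 1/4$ off an event of small probability. The one place your narrative diverges from the paper is the provenance of the exponential-in-$t$ factor in step (i): you attribute it to a Gronwall estimate on resurrection fluctuations controlled via Assumption~\ref{assu2}, whereas the paper simply cites the Villemonais bound in terms of $\sqrt{\mathbb E(1/\mathbb P_{\pi^N(\mathbf X_0)}(\tau_{\partial D}>t)^2)}$ and lower-bounds the survival probability by a deterministic-flow/Gronwall argument that uses only Assumption~\ref{assu1}; this is a minor misattribution and does not change the assembly.
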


In Theorem~\ref{thm}, the dependency in $N$ of the speed of convergence is the same as for $N$ independent diffusion processes. Moreover, the dependency in $\varepsilon$ is also sharp. Indeed, in Assumption~\ref{assu2}, we can take $a$ arbitrarily close to $c^*$, which means that we get a mixing time smaller than $e^{(c^*+\delta)/\varepsilon}$ for any $\delta>0$, which is  the order of the mixing time for the non-killed process. However, as far as Theorem~\ref{thm2} is concerned, for independent processes, one would get from the Bienaym{\'e}-Chebyshev inequality   the explicit rate $1/\sqrt N$. This is indeed what is proven in \cite[Theorem 1]{Villemonais} for the FV process, but with a bound that depends on time. In other words, we improve the result of \cite{Villemonais} to a uniform in time bound, but at the cost of a loss in the rate in $N$. Notice that $\eta_{\varepsilon}$ may be made explicit by carefully following the proofs.

\subsection{Related works}\label{RelWor}

Fleming-Viot processes have first been introduce in the work of Fleming and Viot \cite{FlemingViot} and of Moran \cite{Moran}, in the study of population genetics models. Their use for the approximation of a QSD dates back to \cite{Burdzy1}, where the authors study the case of a Brownian motion in a rectangle. Since then, many results were proven in different cases, and for different questions (long-time convergence, propagation of chaos, existence of the FV process\dots).

In the case of a process in a countable state, the study began with \cite{ferrari2007}. The FV process is well defined here as soon as the death rate is bounded, and the authors showed under several conditions the uniqueness of the QSD, the convergence toward this QSD, the ergodicity of the FV process, and the propagation of chaos for finite time and at equilibrium. In \cite{groisman2012simulation}, the authors improved the propagation of chaos with a quantitative rate, introducing the $\pi$-return process. In \cite{CloezThai}, the rate of convergence of the FV process is proven to be independent of $N$ under strong assumptions, using coupling arguments similar to those of the present work or of \cite{journel20}. As soon as the set is finite, the existence of the FV process is immediate. In \cite{Asselah}, the propagation of chaos is proven for all times and for the stationary measure, with a stronger convergence. In \cite{LelievreReygner}, the convergence as $N\rightarrow\infty$ was refined with a central limit theorem.

For processes in a general space, many results are available. Finite time propagation of chaos is addressed in \cite{Burdzy2,GrigoKang,Villemonais,MicloDelMoral2}, with central limit theorems as $N\rightarrow\infty$ in \cite{Guyadersoft,Guyaderhard}. Then, uniform in time propagation of chaos and long-time convergence are established in \cite{MicloDelMoral,DelMoralGuionnet,Rousset}. The long-time convergence is established when the underlying process is a Brownian motion in \cite{Burdzy1}. If the killing-rate is smooth and bounded, then the well-definiteness of the FV process is obvious, but the non-explosion in the hard killing case has been studied in \cite{bieniek2009nonextinction,bieniek2011extinction}, and along with long time convergence in \cite{villemonais2010interacting}.

Other methods of approximation of a QSD have been developed in discrete and continuous cases in \cite{benaim2015,BenaimCloezPanloup,BCV}, based on self-interacting processes. Study of the conditioned process and its long-time limit has also been studied for a decade by Champagnat, Villemonais and coauthors, in \cite{ChampVilK2018,CV2020,Bansaye,DelMoralVillemonais2018}.

The coupling method used in the present work has been applied in several works about interacting particles systems (not only FV processes) such as \cite{Monmarche,CloezThai,journel20}. These works are based on a perturbation approach where the interaction is assumed to be small enough with respect to the mixing properties of the underlying Markov process. In particular, in our framework, we do not know if the conclusion of Theorem~\ref{thm} is true for any $\varepsilon>0$ and not only in the low temperature regime.

Our work follows the similar study \cite{journel20} in the soft killing case. In fact, as mentioned in the latter, the main motivation of this first work was to set up in a simpler case a method that would then be used to tackle the hard killing case, which was from the beginning the main objective. Besides, \cite{journel20} also addresses the question of the time-discretization of \eqref{descente}, but in the present work we focus on the convergence in $N$ and $t$ and thus we only consider the continuous time dynamics for clarity.

\section{Preliminary results}
\label{sec:proof}

For any two probability measures $\mu, \nu$, we call $(X,Y)$ a coupling of $\mu$ and $\nu$ if the law of $X$ (resp. $Y$) is $\mu$ (resp. $\nu$). For any distance $\mathbf d$ on a set $E$ (here $E=D$ or $E=D^N$), the associated Kantorovich distance on $\mathcal M^1(E)$, is defined by
\[W_{\mathbf d}(\mu,\nu) = \inf\left\{\mathbb E\po \mathbf d(X,Y)\pf, (X,Y) \text{ coupling of }\mu \text{ and }\nu \right\}.\]
We say that $(X,Y)$ is an optimal coupling if $W_{\mathbf d}(\mu,\nu) = \mathbb E\po \mathbf d(X,Y)\pf$. The existence of an optimal coupling results from \cite[Theorem 4.1]{villani}. Given a Markov semi-group $P$, we call a coupling of $(\mu P_t)_{t\geqslant 0}$ and $(\nu P_t)_{t\geqslant 0}$ a stochastic process $(X_t,Y_t)_{t\geqslant 0}$ such that $(X_t)_{t\geqslant 0}$ and $(Y_t)_{t\geqslant 0}$ are Markov processes of semi-group $(P_t)_{t\geqslant 0}$ and initial condition $\mu$ for $X$ and $\nu$ for $Y$. In particular, we have that for such a coupling and all $t\geqslant 0$, \[W_{\mathbf d}(\mu P_t,\nu P_t) \leqslant \E\left( \mathbf d(X_t,Y_t) \right).\]We also say that the processes $X$ and $Y$ have coupled at time $t\geqslant 0$ if $X_t=Y_t$. Finally, in the case where $\mathbf d(x,y)=2\mathbbm{1}_{x\neq y}$, we recover the total variation distance which we write $W_{\mathbf d}(\mu,\nu)=\|\mu-\nu \|_{TV}$.

The proof of our theorems relies on the construction of a coupling of $(\mu P_t^{N,\varepsilon})_{t\geqslant 0}$ and $(\nu P_t^{N,\varepsilon})_{t\geqslant 0}$. This coupling will yield that $P_t^{N,\varepsilon}$ is a contraction for some particular distance defined in Section~\ref{conclu}.

In order to do this, we first need some preliminary results, which is the subject of this section. We start by studying the mixing properties of the non-killed process in Subsection~\ref{coupnonkil} by embedding $D$ into a torus. In Subsection~\ref{Lyapunov}, We construct a Lyapunov functional for each particle. In Subsection~\ref{partnearbound}, using the Lyapunov functional, we study the number of particle that may stay near the boundary of the domain.

In the rest of the paper, we fix some $a\in\left(c^*,U_0\right)$ satisfying assumption~\ref{assu2}  and set $t_\varepsilon=e^{a/\varepsilon}$. Furthermore, bold letters will always denote particle systems, in the sense that $\textbf{X}$ can always be written $\textbf{X}=(X^1,\dots,X^N)$ where for all $1\leqslant i\leqslant N$, $X^i\in D$.

\subsection{Coupling of the non-killed diffusion}\label{coupnonkil}

In this section, we show that we are able to couple two diffusions solution of \eqref{descente} on a torus in total variation distance in a time $t_\varepsilon$ with a probability that goes to 1 as $\varepsilon$ goes to 0, uniformly on $D$. Since we are studying a process killed at the boundary of $D$, we are not interested in what the potential might look like outside of $D$. Consider some torus $\T^d=\left(\R/2L\mathbb Z\right)^d$, with $L$ big enough so that as a subset of $\R^d$ (meaning seeing $\T^d$ as $\left[-L,L\right[^d$), we have that $D\subset \T^d$. Then consider some periodic potential $\tilde U:\R^d\rightarrow\R_+$, equal to $U$ on $D$ as a periodic function, and such that $c^*(\tilde U)=c^*(U)$, where $c^*(\tilde U)$ is defined as $c^*(U)$ with $U$ replaced by $\tilde U$. Such a function exists, as shown in \cite[Section 4]{FourTar}. We still denote by $\tilde U$ the associated function on $\T^d$, and this potential defines a diffusion on $\T^d$ as:

\begin{equation}\label{descente2}
\dd \tilde X_t = -\na \tilde U(\tilde X_t)\dd t + \sqrt{2\varepsilon}\dd B_t.
\end{equation}

We note $\tilde P$ its semi-group. If we see $\tilde X$ as a process in $\R^d$, then we have that $X_t=\tilde X_t$ for all $t\leqslant \tau_{\partial D}$, where $\tau_{\partial D}$ is the death time \eqref{deathtime}.

Now, we construct a coupling for the process $\tilde X$, for all initial condition $(x,y)\in (\T^d)^2$. To do this, we use Sobolev and Poincar\'e inequalities. The Sobolev inequality is used for ultra-contractivity, whereas the Poincar\'e inequality is used to get an optimal convergence rate for the process \eqref{descente2}. Let $\mu_\varepsilon$ denote the probability measure on $\T^d$:\[\mu_\varepsilon(\dd x) = \mathcal Z^{-1} e^{-\tilde U(x)/\varepsilon}\dd x\] where $\mathcal Z$ is the normalization constant. Recall those inequalities:

\begin{lem}\label{Sob}
	$\mu_\varepsilon$ satisfies a Poincar\'e and a Sobolev inequality: there exist $p>2$, $C,\lambda_\varepsilon>0$,  such that
	\[\varepsilon\ln\left(\lambda_\varepsilon\right)\rightarrow -c^*\]	
	 as $\varepsilon\rightarrow 0$, and for all smooth $f:\T^d\mapsto\R$ with $\int_{\T^d} f \dd \mu_\varepsilon = 0$
	\[\lambda_{\varepsilon}\int_{\T^d} f^2 \dd \mu_\varepsilon \leqslant\int_{\T^d} |\nabla f|^2 \dd \mu_\varepsilon\ \text{ (PI)},\]
	\[\left(\int_{\T^d} f^p \dd \mu_\varepsilon\right)^{\frac{2}{p}} \leqslant Ce^{\|\tilde U\|_{\infty}/\varepsilon}\left(\int_{\T^d} f^2 \dd \mu_\varepsilon + \int_{\T^d} |\nabla f|^2 \dd \mu_\varepsilon\right)\ \text{ (SI)}.\]
	 Moreover, for all $t>0$, the law of $\tilde X_t$ with initial condition $x$ has a density $h^\varepsilon_t(x,\cdot)$ with respect to $\mu_\varepsilon$, and both inequalities together imply the existence of some constant $\tilde C>0$ such that, for all $t\geqslant 1$ and $\varepsilon>0$,
	\begin{equation}\label{TVCONV}
	\|h^\varepsilon_t(\cdot,\cdot)-1\|_{\infty} \leqslant \tilde C e^{\tilde C \varepsilon^{-1} -\lambda_{\varepsilon}t}.
	\end{equation}
\end{lem}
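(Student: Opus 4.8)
The plan is to establish the three assertions of the lemma in turn, the only substantive one being the ultracontractive bound \eqref{TVCONV}; the Poincar\'e and Sobolev inequalities are essentially classical facts about $\mu_\varepsilon$ that I would recall with short justifications. For the Poincar\'e inequality $\lambda_\varepsilon\int f^2\dd\mu_\varepsilon\le\int|\na f|^2\dd\mu_\varepsilon$ (for $\mu_\varepsilon$-centered $f$), the existence of a positive optimal constant $\lambda_\varepsilon$ is automatic since $\mu_\varepsilon$ has a smooth positive density on the compact connected manifold $\T^d$. The asymptotics $\varepsilon\ln\lambda_\varepsilon\to -c^*$ is the sharp low-temperature estimate for the spectral gap of a reversible metastable diffusion: $\varepsilon\lambda_\varepsilon$ is exactly the spectral gap of the generator of \eqref{descente2}, so this is the logarithmic version of the mixing-rate estimate for the non-killed dynamics already quoted from \cite{HoStKu}, using crucially that $\tilde U$ was built so that $c^*(\tilde U)=c^*$; the inequality $\liminf\varepsilon\ln\lambda_\varepsilon\ge -c^*$ comes from \cite{HoStKu} (or from Eyring--Kramers type asymptotics) and the matching $\limsup\varepsilon\ln\lambda_\varepsilon\le -c^*$ from a test function which is a smoothing of the indicator of a well of $\tilde U$ separated from a global minimum by a barrier of height $c^*-o(1)$.

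For the Sobolev inequality I would compare $\mu_\varepsilon$ with Lebesgue measure on $\T^d$. The flat torus carries the classical Sobolev inequality $\|f\|_{L^p(\dd x)}^2\le C_0\big(\|f\|_{L^2(\dd x)}^2+\|\na f\|_{L^2(\dd x)}^2\big)$ with $p=2d/(d-2)$ when $d\ge3$ and any fixed $p>2$ when $d\le2$. Since $\tilde U\ge0$ one has $e^{-\tilde U/\varepsilon}\le1$, hence $\|f\|_{L^p(\mu_\varepsilon)}^2\le\mathcal Z^{2/p}\|f\|_{L^p(\dd x)}^2$, while $\|f\|_{L^2(\dd x)}^2\le\mathcal Z^{-1}e^{\|\tilde U\|_\infty/\varepsilon}\|f\|_{L^2(\mu_\varepsilon)}^2$ and likewise for $\na f$. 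Combining these and using the elementary bound $\mathcal Z^{-1}=\int_{\T^d}e^{-\tilde U/\varepsilon}\dd x\le|\T^d|$ (so that $\mathcal Z^{2/p-1}$ stays bounded as $\varepsilon\to0$ because $2/p-1<0$) gives exactly (SI) with $C$ absorbing $C_0$, $|\T^d|$ and the dimension.

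For \eqref{TVCONV}, uniform ellipticity of the generator of \eqref{descente2} on the compact $\T^d$ yields, by standard parabolic regularity, the smooth symmetric kernel $h_t^\varepsilon$ with respect to $\mu_\varepsilon$, and $\tilde P$ is $\mu_\varepsilon$-reversible. From (SI), H\"older interpolation between $L^1$, $L^2$ and $L^p$ produces a Nash inequality with constant $O(e^{\|\tilde U\|_\infty/\varepsilon})$, and the usual Nash/Davies differential-inequality argument applied to $t\mapsto\|\tilde P_tf\|_{L^2(\mu_\varepsilon)}^2$ (keeping the extra factor $\varepsilon$ coming from the Dirichlet form $\varepsilon\int|\na f|^2\dd\mu_\varepsilon$) gives $\|h_t^\varepsilon\|_\infty=\|\tilde P_t\|_{L^1(\mu_\varepsilon)\to L^\infty(\mu_\varepsilon)}\le M_\varepsilon t^{-\kappa}$ for $t\in(0,1]$, with $\kappa=\kappa(d,p)$ and $M_\varepsilon\le e^{\tilde C\varepsilon^{-1}}$ for some $\tilde C$ uniform in $\varepsilon$. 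Then for $t\ge2$ I would write $h_t^\varepsilon(x,\cdot)-1=\tilde P_1\tilde P_{t-2}\big(h_1^\varepsilon(x,\cdot)-1\big)$: the function $g=h_1^\varepsilon(x,\cdot)-1$ satisfies $\int g\dd\mu_\varepsilon=0$ and $\|g\|_{L^2(\mu_\varepsilon)}^2\le\|h_1^\varepsilon\|_\infty\le M_\varepsilon$; (PI) gives $\|\tilde P_{t-2}g\|_{L^2(\mu_\varepsilon)}\le e^{-\varepsilon\lambda_\varepsilon(t-2)}\|g\|_{L^2(\mu_\varepsilon)}$; and duality turns the $t=1$ ultracontractivity into $\|\tilde P_1\|_{L^2(\mu_\varepsilon)\to L^\infty(\mu_\varepsilon)}\le M_\varepsilon^{1/2}$. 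Chaining the three bounds yields $\|h_t^\varepsilon(\cdot,\cdot)-1\|_\infty\le M_\varepsilon e^{2\varepsilon\lambda_\varepsilon}e^{-\varepsilon\lambda_\varepsilon t}$, and for $1\le t\le2$ the crude bound $\|h_t^\varepsilon\|_\infty\le\|h_1^\varepsilon\|_\infty\le M_\varepsilon$ (contractivity of $\tilde P_{t-1}$ on $L^\infty$) suffices; since $\lambda_\varepsilon$ is bounded, both cases are absorbed into a bound of the form $\tilde Ce^{\tilde C\varepsilon^{-1}-\lambda_\varepsilon t}$ after enlarging $\tilde C$, the effective decay rate being the spectral gap $\varepsilon\lambda_\varepsilon$, of the same exponential order $e^{-c^*/\varepsilon}$ as $\lambda_\varepsilon$.

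The only genuinely non-soft ingredient is the asymptotics $\varepsilon\ln\lambda_\varepsilon\to-c^*$, which rests on the metastability theory for low-temperature reversible diffusions; the Sobolev comparison and the passage to ultracontractivity and then to the $L^\infty$ estimate are routine, and the only point requiring care is to check that every constant produced along the way is at most $e^{O(\varepsilon^{-1})}$, so that it stays negligible against the time scale $t_\varepsilon=e^{a/\varepsilon}$ with $a>c^*$ used in the rest of the paper.
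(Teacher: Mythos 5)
Your proposal is correct and takes essentially the same route as the paper: Poincar\'e and the low-temperature asymptotics of $\lambda_\varepsilon$ come from \cite{HoStKu}, the Sobolev inequality is obtained by comparing $\mu_\varepsilon$ with Lebesgue measure on $\T^d$ via $0\leqslant\tilde U\leqslant\|\tilde U\|_\infty$, and the $L^\infty$ bound on $h_t^\varepsilon-1$ follows by combining ultracontractivity on a unit time step with the $L^2$ spectral-gap decay; the only difference is that the paper simply cites \cite[Theorem 6.3.1 and Proposition 6.3.4]{BakryGentilLedoux} for this last step, while you spell out the underlying Nash/Davies argument. One small but genuine observation in your favour: because the Dirichlet form of $\tilde P$ is $\varepsilon\int|\na f|^2\dd\mu_\varepsilon$, the $L^2$ decay rate is the spectral gap $\varepsilon\lambda_\varepsilon$ rather than $\lambda_\varepsilon$, so the exponent in \eqref{TVCONV} should really read $-\varepsilon\lambda_\varepsilon t$; as you note this is harmless since $\varepsilon\lambda_\varepsilon$ and $\lambda_\varepsilon$ have the same exponential order $e^{-c^*/\varepsilon}$, so the downstream estimate $\lambda_\varepsilon t_\varepsilon\geqslant e^{b/\varepsilon}$ for any $b<a-c^*$ in the proof of Lemma~\ref{coupopt} is unaffected.
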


\begin{proof}
	The  Poincar{\'e} inequality, as well as the asymptotic on $\lambda_\varepsilon$, have been proven in \cite{HoStKu}.
	The uniform measure on $\T^d$ satisfies a Sobolev inequality, see \cite[section 6]{BakryGentilLedoux}. Then we can write:
	\begin{align*}
	\left(\int_{\T^d} f^p \dd \mu_\varepsilon\right)^{\frac{2}{p}} &\leqslant \mathcal{Z}^{-\frac{2}{p}} \left(\int_{\T^d} f^p\right)^{\frac{2}{p}} \\ & \leqslant C\mathcal{Z}^{-\frac{2}{p}}  \left( \int_{\T^d} f^2 + \int_{\T^d} |\na f|^2 \right) \\ &\leqslant C\mathcal{Z}^{1-\frac{2}{p}} e^{\|\tilde U\|_{\infty}/\varepsilon} \left( \int_{\T^d}f^2\dd \mu_\varepsilon + \int_{\T^d} |\na f|^2 \dd\mu_\varepsilon \right).
	\end{align*}
	Since $\mathcal{Z}$ is bounded above by the volume of $\T^d$,  $\mathcal{Z}^{1-\frac{2}{p}}$ is bounded uniformly over $\varepsilon$ because $p>2$ (and thus $1-\frac{2}{p}>0$). Therefore, we have the Sobolev inequality with the said constant.
	The last two points are \cite[Theorem 6.3.1 and Proposition 6.3.4]{BakryGentilLedoux}.

\end{proof}

\begin{lem}\label{coupopt}
Let $a>c^*$.
Under Assumption~\ref{assu1}, there exists $\varepsilon_0>0$, such that for all $0<\varepsilon<\varepsilon_0$, there exists $c_{\varepsilon}>0$ such that for all $x,y\in D$, there exists a coupling $(\tilde X_t,\tilde Y_t)_{t\geqslant 0}$ of $(\delta_x\tilde P_t)_{t\geqslant 0}$ and $(\delta_y\tilde P_t)_{t\geqslant 0}$ such that, writing $t_{\varepsilon} = e^{a/\varepsilon}$: \[\pro{\tilde X_{ t_\varepsilon}= \tilde Y_{ t_\varepsilon}}\geqslant c_\varepsilon.\]Moreover, as $\varepsilon\rightarrow0$, \[c_\varepsilon\rightarrow 1.\]
\end{lem}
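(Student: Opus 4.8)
\textbf{Proof proposal for Lemma~\ref{coupopt}.}

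The plan is to combine a reflection-type coupling on the torus with the ultracontractivity estimate \eqref{TVCONV} of Lemma~\ref{Sob}. The key observation is that the total variation distance between two laws of the diffusion at time $t_\varepsilon$ can be controlled by the density bound, and that since $\varepsilon \ln(\lambda_\varepsilon) \to -c^*$ while $a > c^*$, the product $\lambda_\varepsilon t_\varepsilon = \lambda_\varepsilon e^{a/\varepsilon}$ diverges to $+\infty$ as $\varepsilon \to 0$. More precisely, first I would apply the Markov property and the semigroup property to split $\tilde X$ over the interval $[0,t_\varepsilon]$: run each of the two copies for a short fixed time, say time $1$, so that both $\delta_x \tilde P_1$ and $\delta_y \tilde P_1$ have densities $h^\varepsilon_1(x,\cdot)$ and $h^\varepsilon_1(y,\cdot)$ with respect to $\mu_\varepsilon$, and then run for the remaining time $t_\varepsilon - 1$.

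The main quantitative step is to estimate, for $t = t_\varepsilon - 1$ (which is $\geqslant 1$ for $\varepsilon$ small),
\[
\|\delta_x \tilde P_{t_\varepsilon} - \delta_y \tilde P_{t_\varepsilon}\|_{TV} \leqslant \sup_{z} \|h^\varepsilon_{t_\varepsilon}(z,\cdot) - 1\|_{\infty} \leqslant \tilde C e^{\tilde C \varepsilon^{-1} - \lambda_\varepsilon (t_\varepsilon-1)},
\]
using \eqref{TVCONV} together with the triangle inequality against the invariant measure $\mu_\varepsilon$ (each density $h^\varepsilon_{t_\varepsilon}(z,\cdot)$ is close to $1$ in $L^\infty(\mu_\varepsilon)$, hence the two are close to each other, hence the two laws are close in total variation). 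Now take logarithms: $\varepsilon \ln\big(\tilde C e^{\tilde C \varepsilon^{-1} - \lambda_\varepsilon(t_\varepsilon - 1)}\big) = \varepsilon \ln \tilde C + \tilde C - \varepsilon \lambda_\varepsilon(t_\varepsilon-1)$. Since $\varepsilon \ln \lambda_\varepsilon \to -c^*$, we have $\varepsilon \lambda_\varepsilon \sim e^{-c^*/\varepsilon + o(1/\varepsilon)}$, so $\varepsilon \lambda_\varepsilon (t_\varepsilon - 1) \sim e^{(a - c^*)/\varepsilon + o(1/\varepsilon)} \to +\infty$ because $a > c^*$. Hence the whole bound tends to $0$ as $\varepsilon \to 0$, uniformly in $x,y \in D$. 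Finally, by the coupling characterization of the total variation distance (the maximal coupling of \cite[Theorem 4.1]{villani} applied with $\mathbf d(x,y) = 2\mathbbm 1_{x\neq y}$, or equivalently Doeblin's coupling lemma), there exists a coupling $(\tilde X_t, \tilde Y_t)_{t\geqslant 0}$ of $(\delta_x \tilde P_t)_{t\geqslant 0}$ and $(\delta_y \tilde P_t)_{t \geqslant 0}$ — coupled so that they move together after $t_\varepsilon$ once merged — with
\[
\mathbb P\big(\tilde X_{t_\varepsilon} = \tilde Y_{t_\varepsilon}\big) = 1 - \|\delta_x \tilde P_{t_\varepsilon} - \delta_y \tilde P_{t_\varepsilon}\|_{TV} \geqslant 1 - \tilde C e^{\tilde C\varepsilon^{-1} - \lambda_\varepsilon(t_\varepsilon - 1)} =: c_\varepsilon,
\]
and we have just shown $c_\varepsilon \to 1$. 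Choosing $\varepsilon_0$ small enough that $c_\varepsilon > 0$ for $\varepsilon < \varepsilon_0$ completes the argument.

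The only genuine subtlety — and the step I would be most careful about — is the passage from the $L^\infty(\mu_\varepsilon)$ density bound \eqref{TVCONV} to a total variation bound and the uniformity in the initial points: one must note $\|\delta_x\tilde P_{t_\varepsilon} - \mu_\varepsilon\|_{TV} = \frac12 \int_{\T^d} |h^\varepsilon_{t_\varepsilon}(x,z) - 1|\,\mu_\varepsilon(\dd z) \leqslant \frac12 \|h^\varepsilon_{t_\varepsilon}(x,\cdot) - 1\|_\infty$, which is already uniform in $x$ since the right-hand side of \eqref{TVCONV} does not depend on the starting point. Everything else is bookkeeping: checking $t_\varepsilon - 1 \geqslant 1$ for small $\varepsilon$ so that \eqref{TVCONV} applies, and absorbing constants. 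Note that although the reflection/synchronous coupling picture is a helpful mental image, it is not actually needed — the existence of the merging coupling follows abstractly from the total variation estimate.
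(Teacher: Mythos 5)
Your proof is correct and follows essentially the same route as the paper: bound $\|\delta_x\tilde P_{t_\varepsilon}-\mu_\varepsilon\|_{TV}$ uniformly in $x$ via the ultracontractivity estimate~\eqref{TVCONV}, observe that $\lambda_\varepsilon t_\varepsilon$ diverges because $a>c^*$, and convert the resulting total-variation smallness into a merging coupling by the maximal-coupling (Doeblin) characterization. The only cosmetic differences are the unnecessary ``run for time $1$ first'' splitting (one can apply~\eqref{TVCONV} directly at time $t_\varepsilon\geqslant 1$) and some inconsequential factors of $2$ in the TV normalization.
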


\begin{proof}
	We start by bounding the total variation distance between the law of $\tilde X_t$ and the equilibrium $\mu_\varepsilon$. Recall that $h^\varepsilon_t(x,\cdot)$ denotes the density of the law of $\tilde X_t$  with respect to $\mu_\varepsilon$. For $x\in D$, using \eqref{TVCONV}, we have:
	\[\|\delta_x\tilde P_t - \mu_\varepsilon\|_{TV} = \int_{R^d}|h^\varepsilon_t-1| \dd \mu_\varepsilon \leqslant  \|h^\varepsilon_t(\cdot,\cdot)-1\|_{\infty} \leqslant \tilde C e^{\tilde C \varepsilon^{-1} -\lambda_{\varepsilon}t}.\]
	Since $t_\varepsilon=e^{a/\varepsilon}$, and $\varepsilon\ln(\lambda_\varepsilon)\rightarrow -c^*$ as $\varepsilon\rightarrow0$, we have that $\lambda_\varepsilon t_\varepsilon\geqslant e^{b/\varepsilon}$, for $\varepsilon$ small enough and all $b<a-c^*$. Then, we may fix some $b<a-c^*$ and let:
	\[c_\varepsilon = 1-2\tilde C e^{\tilde C \varepsilon^{-1} -e^{b/\varepsilon}},\]so that $\lim_{\varepsilon\rightarrow0}c_\varepsilon= 1$ and for all $x,y\in D$, \[\|\delta_x\tilde P_t - \delta_y \tilde P_t\|_{TV}\leqslant \|\delta_x\tilde P_t - \mu_\varepsilon\|_{TV}+\|\delta_y\tilde P_t - \mu_\varepsilon\|_{TV} \leqslant 1-c_\varepsilon.\]
	The existence of the coupling of the trajectories results from the total variation distance properties, see for example \cite[Lemma 9]{Monmarche}.
\end{proof}

\subsection{Lyapunov functional}\label{Lyapunov}

In order to show the convergence of the particle system, we need first the construction of some Lyapunov function for each particle, that is to say some function of one particle that decreases in average over time, as long as it starts large enough. This is the goal of this section. We first need to strengthens Assumption~\ref{assu2}, allowing for a larger number of initial conditions than in \eqref{strongdeathprob_assum}.

\begin{lem}
Under Assumption~\ref{assu1} and~\ref{assu2}, for all $\mathfrak B_1\subset D$ such that $\mathfrak B_1\cup(\R^d\setminus D)$ is a neighborhood of $\partial D$,     \begin{equation}\label{strongdeathprob}
\bar p_\varepsilon:=\mathbb P\left(\exists x\in D\setminus \mathfrak B_1, \tau_{\partial D}(X^x)<e^{a/\varepsilon}\right)\underset{\varepsilon\rightarrow 0}{ \longrightarrow} 0.
\end{equation}
\end{lem}

\begin{proof}
Recall the definition of $T_0$ and $\mathcal N$ from Assumption~\ref{assu2}, fix some $\mathfrak B_1\subset D$ such that $\mathfrak B_1\cup(\R^d\setminus  D)$ is a neighborhood of $\partial D$ , and denote

\[
\delta = \min\left( \inf_{t\in[0,T_0]} \mathrm{dist}( \varphi_t(D\setminus \mathfrak B_1), D^c),  \mathrm{dist}(\varphi_{T_0}(\overline{D}),\mathcal N^c)  \right),
\]
where $\mathrm{dist}$ denotes the distance between two sets, which is positive (by continuity, compactness and \eqref{return}). We have:
\[
\varphi_t(x) - X_t^x = -\int_0^t \na U(\varphi_s(x)) - \na U(X_s^x) \dd s - \sqrt{2 \varepsilon} B_t
\]
so that if $L$ denotes some Lipschitz constant of $\na U|_D$, we get for all $T>0$:
\[
\sup_{0\leqslant t \leqslant T}|\varphi_t(x) - X_t^x| \leqslant L \int_0^T \sup_{0\leqslant t \leqslant s}|\varphi_t(x) - X_t^x| \dd s + \sqrt{2\varepsilon} \sup_{0\leqslant t \leqslant T}|B_t|,
\]
and Grönwall's lemma yields
\[
\sup_{0\leqslant t \leqslant T_0}|\varphi_t(x) - X_t^x|\leqslant \sqrt{2\varepsilon}e^{LT_0}\sup_{0\leqslant t \leqslant T_0} |B_t|.
\]
In particular, we have that 
\[
\lim_{\varepsilon\rightarrow 0}\mathbb P\left( \mathcal A \right) = 1, \qquad \mathcal A = \left\{ \sup_{0\leqslant t \leqslant T_0}\sup_{x\in D}|\varphi_t(x) - X_t^x|\leqslant \delta/2 \right\}.
\]
On the event $\mathcal A$, $\tau_{\partial D}(X^x) > T_0$ and  $X_{T_0}^x\in \mathcal N$ for all $x\in D\setminus \mathfrak B_1$, and using the Markov property at time $T_0$, we have that 
\[
\mathbb P\left(\exists x\in D\setminus \mathfrak B_1, \tau_{\partial D}(X^x)<e^{a/\varepsilon}\right) \leqslant  \mathbb P\po\mathcal A^c\pf + \mathbb P\po \exists x\in \mathcal N, \tau_{\partial D}(X^x)<e^{a/\varepsilon} \pf \underset{\varepsilon\rightarrow 0}{ \longrightarrow} 0, 
\]
which concludes the proof.
\end{proof}

For all $q>0$, write
\[
F_q=\{y\in D, \mathrm{dist}(y,\partial D)> q\}.
\]
From \cite[Lemma 14.16]{gilbarg1977elliptic}, there exists $q_0>0$ such that  $x \mapsto d_{\partial D}(x):=  \mathrm{dist}(x,\partial D)$ the Euclidean distance to $\partial D$ is $\mathcal C^2$ over $D\setminus F_{q_0}$. Moreover, thanks to \eqref{return} in Assumption~\ref{assu1}, we may take $q$ small enough so that  
\begin{equation}\label{eq:zeta}
\kappa:= \inf_{x\in D\setminus F_{q_0}} \na d_{\partial D}(x)\cdot \na U(x)  >0.
\end{equation}
For the remainder of this work, we fix some $\mathfrak B_1\subset D$ such that $\mathfrak B_1\cup(\R^d\setminus D)$ is a neighborhood of $\partial D$ and 
\begin{equation}
    \label{eq:condB1}
    \mathfrak B_1 \subset D\setminus F_{q_0/4},\qquad \min_{\mathfrak B_1} U>a.
\end{equation}
This second condition will be used in Section~\ref{conclu}, and is possible because $a<U_0$ (recall we fixed some $a\in\left(c^*,U_0\right)$ satisfying assumption~\ref{assu2}). In the rest of this work, $\bar{p}_\varepsilon$ is given by~\eqref{strongdeathprob} with this fixed $\mathfrak B_1$.

The construction of the Lyapunov function also relies on a result first shown in \cite{GrigoKang}, similar to Ito's formula. The process~\eqref{descente} has a generator $\mathcal L$ defined for all smooth function $f:\R^d\to\R$ with compact support as:
\begin{equation}
\mathcal L f= \varepsilon \Delta f - \na U\cdot \na f.
\end{equation} 
For a smooth function $f:D^N\to\R_+$, and $1\leqslant i\leqslant N$, write $\mathcal L_{x_i}$ for the generator $\mathcal L$, acting only on the i-th variable:

\begin{equation*}
    \mathcal L_{x_i}f = \sum_{j=1}^d \varepsilon \partial^2_{(x_i)_j}f - \partial_j U(x_i) \partial_{(x_i)_j}f.
\end{equation*}
Write as well $(\tau_n^i)_n$ for the sequence of death times of particle $i$:
\begin{equation}\label{eqf:deftauni}
    \tau^i_0=0,\quad \tau^i_{n+1} = \inf\left\{t> \tau^i_n, X_{t-}^i\in\partial D\right\},
\end{equation}
and $R_i$ for the point process corresponding to the jumps of this particle:
\[R_i(t)=\sum_{n=1}^{\infty}\mathbbm{1}_{\tau^i_n\leqslant t}.\]
For all $\textbf x\in D^N$, $1\leqslant i\neq j\leqslant N$, write:
\[ x^{ij}_k = \left\{ \begin{matrix} x_k\text{ if }k\neq i \\ x_j \text{ else }\end{matrix} \right..\]

\begin{prop}[\cite{GrigoKang}, Proposition 1]\label{Ito}
	Let $N\in\mathbb N$,  $f\in\C^0\left(\bar{D}^N\right)\bigcap\C^{\infty}\left(D^N\right)$. Denote by \[\mathbf{R}(f)(t)=\sum_{i=1}^N\frac{1}{N-1}\sum_{j\neq i}\int_0^t \left(f(\mathbf{X}^{ij}_{s-})-f(\mathbf{X}_{s-})\right)\dd R_i(s)\] the finite variation process of the jump part, and \[Q(f)(t)=\int_0^t \sum_{i=1}^N \mathcal L_{x_i}f(\mathbf{X}_s)\dd s\] the finite variation process of the diffusion part. Then there exists a martingale $\M(f)$ such that almost surely for all $t\geqslant 0$: \[f(\mathbf{X}_t)-f(\mathbf{X}_0)= \mathbf R(f)(t) + Q(f)(t) + \M(f)_t. \]
	Moreover, 
	\[\mathbf R(f)(t)+\M(f)_t = \sum_{i=1}^N\int_0^t \na_{x_i}f(\textbf X_s)\cdot \dd B^i_s + \sum_{n,\tau_n\leqslant t}f(\textbf X_{\tau_n})-f(\textbf X_{\tau_n-}),\]
	where the $(B^i)$'s are the Brownian motion used in the definition of the FV process, and the $(\tau_n)$ are the death times.
\end{prop}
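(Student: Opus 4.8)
The statement is attributed to [GrigoKang, Proposition 1], so the plan is to reproduce that argument in the present notation. The idea is to treat separately what happens between consecutive branching times and what happens at the branching times themselves, then patch everything together using that $\sup_n \tau_n = \infty$ a.s. (which holds under Assumption~\ref{assu1} by [villemonais2010interacting, Theorem 2.1]).

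Let me describe the steps. First, fix $n$ and work on the stochastic interval $[\tau_{n-1}, \tau_n)$. On this interval each coordinate $X^i$ solves the SDE $\dd X^i_t = -\na U(X^i_t)\dd t + \sqrt{2\varepsilon}\,\dd B^i_t$ with no jumps, so the classical multidimensional Itô formula applies to $f \in \C^\infty(D^N)$ and gives, for $\tau_{n-1} \leqslant s \leqslant t < \tau_n$,
\[
f(\mathbf X_t) - f(\mathbf X_s) = \int_s^t \sum_{i=1}^N \mathcal L_{x_i} f(\mathbf X_u)\,\dd u + \sum_{i=1}^N \int_s^t \sqrt{2\varepsilon}\,\na_{x_i} f(\mathbf X_u)\cdot \dd B^i_u .
\]
(One should first localize by a stopping time to keep $\mathbf X$ in a compact subset of $D^N$ where $f$ and its derivatives are bounded, then remove the localization; since $f$ is only assumed continuous up to the boundary, the diffusion part is controlled on each $[\tau_{n-1},\tau_n)$ by continuity of the trajectory on the closed interval.) Next, at a branching time $\tau_n$, the particle $i_n$ jumps from $\mathbf X_{\tau_n-} \in \partial D$ (in its $i_n$-th coordinate) to $\mathbf X_{\tau_n} = \mathbf X_{\tau_n-}^{\,i_n j}$ where $j = I^{i_n}_{n}$ is the chosen index, so the increment of $f$ is exactly $f(\mathbf X_{\tau_n}) - f(\mathbf X_{\tau_n-}) = f(\mathbf X_{\tau_n-}^{\,i_n j}) - f(\mathbf X_{\tau_n-})$, a term of the form appearing in $\mathbf R(f)$.

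Then I would sum the telescoping increments over all intervals up to time $t$: adding the continuous increments over $[\tau_{n-1}, \tau_n)$ for all $n$ with $\tau_{n-1} \leqslant t$, plus the jump increments at each $\tau_n \leqslant t$, recovers $f(\mathbf X_t) - f(\mathbf X_0)$. The continuous pieces assemble into $Q(f)(t) = \int_0^t \sum_i \mathcal L_{x_i} f(\mathbf X_s)\,\dd s$ plus the stochastic integral $\sum_i \int_0^t \sqrt{2\varepsilon}\,\na_{x_i} f(\mathbf X_s)\cdot \dd B^i_s$; the jump pieces give $\sum_{n:\tau_n\leqslant t} \big(f(\mathbf X_{\tau_n}) - f(\mathbf X_{\tau_n-})\big)$. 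To match the statement's decomposition, write $\mathbf R(f)(t)$ as the compensator-free representation and define $\mathbf M(f)_t$ so that $\mathbf R(f)(t) + \mathbf M(f)_t$ equals the stochastic integral plus the raw jump sum, i.e. $\mathbf M(f)_t = \sum_i \int_0^t \na_{x_i} f(\mathbf X_s)\cdot \dd B^i_s + \sum_{n:\tau_n\leqslant t}\big(f(\mathbf X_{\tau_n}) - f(\mathbf X_{\tau_n-})\big) - \mathbf R(f)(t)$. (I am suppressing the $\sqrt{2\varepsilon}$, which should be carried through or absorbed into the definition of $\mathbf M$ consistently with [GrigoKang].) One then checks $\mathbf M(f)$ is a martingale: the stochastic integral part is a local martingale, and the difference $\sum_{n:\tau_n\leqslant t}(f(\mathbf X_{\tau_n}) - f(\mathbf X_{\tau_n-})) - \mathbf R(f)(t)$ is the compensated jump martingale, since $\mathbf R(f)(t)$ is precisely the predictable compensator: conditionally on $\tau_n$ and on $\mathbf X_{\tau_n-}$ (with $i_n$ the exiting index determined by the trajectory), the resurrection index $j = I^{i_n}_n$ is uniform on $\{1,\dots,N\}\setminus\{i_n\}$ and independent of the past, so the conditional expectation of the jump increment is $\frac{1}{N-1}\sum_{j\neq i_n}\big(f(\mathbf X_{\tau_n-}^{\,i_n j}) - f(\mathbf X_{\tau_n-})\big)$, which is exactly the integrand in $\mathbf R(f)$. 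Integrability follows from $f \in \C^0(\bar D^N)$ being bounded and from the fact that $R_i(t)$ has finite expectation on compact time intervals (no explosion).

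The main obstacle is the boundary regularity: $f$ is only assumed continuous on $\bar D^N$ and smooth on the open set $D^N$, so Itô's formula cannot be applied globally and one must argue that the diffusion does not spend time on $\partial D^N$ in a way that breaks the formula — in fact each particle is instantaneously restarted in the interior at a branching time and otherwise stays in $D$, so on each interval $(\tau_{n-1},\tau_n)$ the process lives in $D^N$ and the continuity of $f$ up to $\bar D^N$ only enters at the endpoints $\tau_{n-1}, \tau_n$ where it guarantees the increments telescope correctly. The other point requiring care is the passage to the limit over the infinitely many branching times up to time $t$: this is where non-explosion ($\sup_n \tau_n = \infty$ a.s.) is essential, together with a dominated-convergence or monotone-convergence argument to ensure the (a.s. finite) sums and integrals up to time $t$ are well-defined and the martingale property survives the limit — which is standard once one has the uniform integrability coming from boundedness of $f$ and finiteness of $\E[R_i(t)]$.
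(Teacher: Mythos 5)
The paper does not prove Proposition~\ref{Ito} itself; it is imported from \cite{GrigoKang}. Your reconstruction takes the natural route --- It\^o's formula between branching times, telescoping continuous and jump increments using non-explosion, then compensating the raw jump sum by averaging over the uniformly distributed resurrection index --- and this is, in substance, the right argument. In particular you isolated the correct martingale mechanism: conditionally on $\F_{\tau_n-}$ and on the dying index $i_n$, the jump $f(\mathbf X_{\tau_n-}^{i_n J})-f(\mathbf X_{\tau_n-})$ with $J=I_n^{i_n}$ uniform on $\cco 1,N\ccf\setminus\{i_n\}$ has conditional mean exactly equal to the $\mathbf R$-increment, which is what makes the compensated jump sum a martingale. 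A small terminological caveat: $\mathbf R(f)$ is not a predictable compensator in the Doob--Meyer sense, since $\dd R_i$ is the raw jump measure driven by a boundary hitting time rather than a predictable intensity; it is the conditional average over the $I$-randomness, and the martingale claim rests precisely on the conditioning argument you gave, not on predictability of $\mathbf R$. You also correctly noted that the displayed identity appears to be missing a $\sqrt{2\varepsilon}$ in front of the stochastic integral; this looks like a typographical omission in the paper.

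The one genuine gap is the boundary regularity you flag but do not close. The hypothesis $f\in\C^0(\bar D^N)\cap\C^\infty(D^N)$ gives no control over $\na f$ or $\Delta f$ near $\partial D^N$, so on an interval $[\tau_{n-1},\tau_n)$ whose right endpoint is a boundary hitting time it is not automatic that $\int \sum_i\mathcal L_{x_i}f(\mathbf X_u)\,\dd u$ is finite, nor that the stochastic integral is a true rather than merely local martingale. Continuity of the trajectory on the closed interval controls $f$ itself but not its derivatives. Closing this requires either additional decay hypotheses on the derivatives near $\partial D^N$, a more careful localization-and-limit argument that rules out blow-up, or the observation (which suffices for this paper) that Proposition~\ref{Ito} is only ever invoked with $f(\mathbf x)=V(x_i)$, where $V$ is smooth on a neighborhood of $\bar D$, so the derivatives are in fact bounded and the issue disappears.
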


This decomposition will allow us to prove the existence of the Lyapunov functional for each particle. 

\begin{lem}\label{Lyapu}
	Under Assumption~\ref{assu1} and~\ref{assu2}, for all $V_0 > 9$, there exist $\varepsilon_0>0$ and a smooth function $V:\bar D\mapsto[1,+\infty)$ such that $V$ is constant equal to $V_0$ on $\partial D$, $\sup_D V=V_0$, $\mathfrak B_1\subset \{V >3V_0/4\}$ and for all $N\in\N$, $0<\varepsilon<\varepsilon_0$, $\textbf x\in D^N$, and $1\leqslant i\leqslant N$, we have:
	\begin{equation}\label{lyapu}
	\E_{\textbf x}\left(V\left(X^i_{t_\varepsilon}\right)\right) \leqslant \gamma_{\varepsilon}V(x_i) + C_1(1-\gamma_{\varepsilon})
	\end{equation}
 where  $\gamma_\varepsilon \in(0,1)$ is independent from $\textbf{x}$ and vanishes as  $\varepsilon\rightarrow 0$, and $C_1=9/4$.
\end{lem}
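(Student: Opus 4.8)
The plan is to build $V$ explicitly as a function of $U$ near the boundary and a constant in the bulk, then glue them. First I would fix a small collar neighborhood $\mathcal N$ of $\partial D$ on which $|\na U|$ is bounded below (possible by \eqref{return} and compactness, shrinking $\mathcal N$), and on which the flow of $-\na U$ points strictly inward. On $\mathcal N$ I would look for $V$ of the form $V = \phi(U)$ for a smooth increasing $\phi:[U_0-\delta,U_0]\to\R_+$ with $\phi(U_0)=V_0$; then $\na V = \phi'(U)\na U$ and
\[
\mathcal L V = \varepsilon\Delta(\phi(U)) - |\na V|^2/\phi'(U)\cdot\text{(sign bookkeeping)} = \phi'(U)\mathcal L U + \varepsilon\phi''(U)|\na U|^2 .
\]
Since $\mathcal L U = \varepsilon\Delta U - |\na U|^2$ and $|\na U|^2$ is bounded below on $\mathcal N$, by choosing $\phi$ convex enough (i.e. $\phi''/\phi'$ large) one makes $\mathcal L V \leqslant -\kappa < 0$ on $\mathcal N$ for all small $\varepsilon$, uniformly. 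Crucially, because $U\leqslant U_0$ on $D$ with equality only on $\partial D$, and $\phi$ is increasing, $V = \phi(U) \leqslant V_0$ on $\mathcal N$ with the max attained exactly on $\partial D$; this gives both $\sup_D V = V_0$ and the sign at jump times, since a jump moves particle $i$ from $\partial D$ (where $V = V_0$ is maximal) to some interior point, hence $V(\mathbf X^{ij}_{\tau_n}) - V(\mathbf X^{ij}_{\tau_n-}) \leqslant 0$. On the complement $D\setminus\mathcal N$ I would simply take $V$ equal to a constant $V_0$ adjusted... — more precisely, extend $\phi(U)$ smoothly to a function that is constant equal to some value $\leqslant V_0$ on the bulk; one must check $\mathcal L V$ stays bounded (it need not be negative in the bulk, only controlled) and that $\sup_D V = V_0$ is preserved. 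The constant $C_1$ will be chosen to dominate $\sup_D V$ on the region where the drift of $V$ is not negative, so the requirements $V_0\geqslant 4\sup_D U+1$ and $4C_1<V_0$ can be met by taking $V_0$ large after $C_1$ is fixed.

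Next I would convert the infinitesimal drift estimate into the one-step inequality \eqref{lyapu} using Proposition \ref{Ito} applied to the single-particle function $\mathbf x\mapsto V(x_i)$ (extended arbitrarily and harmlessly to all of $D^N$, it only depends on $x_i$). The jump part $\mathbf R(V(\cdot_i))$ is nonpositive by the maximality of $V$ on $\partial D$ just discussed, and the martingale part has zero expectation (after a localization/stopping argument using that $V$ is bounded on $\bar D$, which is automatic since $\bar D$ is compact and $V$ continuous). Hence
\[
\E_{\mathbf x}\big(V(X^i_t)\big) \leqslant V(x_i) + \int_0^t \E_{\mathbf x}\big(\mathcal L_{x_i} V(\mathbf X_s)\big)\,\dd s \leqslant V(x_i) + \int_0^t \E_{\mathbf x}\big(-\kappa V(X^i_s)\mathbbm 1_{X^i_s\in\mathcal N} + K\mathbbm 1_{X^i_s\notin\mathcal N}\big)\,\dd s,
\]
where I would arrange $\mathcal L_{x_i}V \leqslant -\kappa V + C_1'$ globally (this is the standard Lyapunov/Foster shape: negative drift proportional to $V$ when $V$ is large, i.e. near $\partial D$, and bounded error elsewhere, which holds because $V$ is bounded on the bulk and $V$ is large precisely on $\mathcal N$ if $V_0$ is large relative to $\sup_{D\setminus\mathcal N}V$ — again achievable by scaling). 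A Grönwall-type argument then yields $\E_{\mathbf x}(V(X^i_t)) \leqslant e^{-\kappa t}V(x_i) + \frac{C_1'}{\kappa}(1-e^{-\kappa t})$; evaluating at $t = t_\varepsilon = e^{a/\varepsilon}$ gives \eqref{lyapu} with $\gamma_\varepsilon = e^{-\kappa t_\varepsilon}\to 0$ and $C_1 = C_1'/\kappa$. One subtlety: $\kappa$ is a fixed constant independent of $\varepsilon$ (the $\varepsilon\Delta$ terms are lower-order and absorbed for $\varepsilon\leqslant\varepsilon_0$), so $\gamma_\varepsilon$ genuinely vanishes; and $C_1$ does not depend on $V_0$ (it should be $C_1 = C_1'/\kappa$ with $C_1'$ coming from the bulk bound, which can be kept independent of $V_0$ by letting the bulk value of $V$ be a fixed constant and only scaling the boundary layer — this matches the statement "there exists $C_1$ such that for all $V_0\geqslant 4\sup_D U+1$").

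The main obstacle is the gluing: I need a single smooth $V$ that is (a) constant $=V_0$ on $\partial D$, (b) $\leqslant V_0$ everywhere with the max only on $\partial D$ so jumps decrease $V$, (c) strongly drift-negative in a neighborhood of $\partial D$, and (d) has globally bounded $\mathcal L_{x_i}V$ with the clean Foster form $\mathcal L_{x_i}V\leqslant -\kappa V + C_1'$. Requirements (b) and (c) are in mild tension with smoothness of the transition, and one must be careful that in the interpolation zone $\mathcal L V$, while possibly positive, does not exceed $-\kappa V + C_1'$ — this is where $V_0 \gg \sup_D U$ is used, ensuring $V$ in the transition zone is still not too large compared to its boundary-layer values, or alternatively one accepts a worse (but $\varepsilon$-independent) $\kappa$. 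I would handle this by taking $V = \psi(d(x,\partial D))$-type composition with a carefully chosen one-variable profile, matching derivatives at the interface, and checking the differential inequality piecewise; the computations are routine once the profile $\phi$ (resp.\ $\psi$) is pinned down, but getting all four properties simultaneously with constants uniform in $\varepsilon$ is the delicate point.
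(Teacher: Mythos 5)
Your strategy coincides with the paper's: take $V$ a monotone function of $U$ near $\partial D$ so that $\mathcal L V$ is strongly negative there, use Proposition~\ref{Ito} together with the observation that the jump part $\mathbf R(V)$ is nonpositive because $V$ attains its maximum on $\partial D$, and close with a Foster--Lyapunov/Gr\"onwall argument yielding $\gamma_\varepsilon = e^{-\omega t_\varepsilon}$. Two points, however, need repair.

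First, a sign issue. For $V=\phi(U)$ one has
$\mathcal L V = \phi'(U)\big(\varepsilon\Delta U - |\na U|^2\big) + \varepsilon\,\phi''(U)\,|\na U|^2$,
and if $\phi''>0$ the last term is \emph{positive}, so "choosing $\phi$ convex enough" works \emph{against}, not for, negativity. The negativity comes solely from $-\phi'(U)|\na U|^2$ being bounded away from zero (using $\phi'>0$ and $|\na U|>0$ on the relevant set) and dominating the two $O(\varepsilon)$ terms once $\varepsilon\leqslant\varepsilon_0$; convexity plays no role.

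Second, the gluing you flag as the main obstacle is exactly where the paper's construction differs and removes the difficulty. They pick $F\subset D$ with $\R^d\setminus F$ a neighborhood of $\partial D$, no critical point of $U$ on $D\setminus F$, set $U_i=\min_{D\setminus F}U$, and take a smooth increasing $f:[0,U_0]\to\R$ with $f(u)=u$ for $u\leqslant U_i$, $f(U_0)=V_0$, and $\min_{[U_i,U_0]}f'>0$. Defining $V=U$ on $F$ and $V=f(U)$ on $D\setminus F$, the two expressions agree near the interface because $f$ is the identity there, so $V$ is a single smooth function with \emph{no interpolation region} where $\mathcal L V$ is uncontrolled. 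One then gets $\mathcal L V\leqslant -\omega V$ on all of $D\setminus F$ with $\omega=\frac{\zeta}{4V_0}\min f'$, and $\mathcal L V\leqslant \varepsilon\sup_D|\Delta U| + \omega\sup_D U - \omega V$ on $F$. Since $\sup_F V=\sup_F U$ does not depend on $V_0$, and $\varepsilon\sup_D|\Delta U|/\omega$ can be made $\leqslant 1/4$ by choosing $\varepsilon_0$ small (depending on $V_0$), this produces $C_1=\sup_F U + 1/4$ independent of $V_0$. So the missing idea is: do not take $V$ constant in the bulk and then glue; take $V=U$ in the bulk and let the boundary-layer modification be $f(U)$ with $f=\mathrm{id}$ at its inner edge, so the gluing is free.
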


The value $V_0$ of $V$ on the boundary is a fixed parameter that will be chosen in Section~\ref{conclu}. In the rest of the work we will use the notation $C_1$ instead of its explicit value since the latter  follows from an arbitrary choice in our proof (as is the constraint that $V_0>9$) and, for instance,  it will be clearer than with the explicit value when we consider $2C_1$ that this is related with Lemma~\ref{Lyapu}.

\begin{proof}
Fix some $V_0 >9$, and let $f:\left[0,\|d_{\partial D}\|_{\infty}\right]\mapsto \R_+$ be some smooth non-increasing function satisfying the following conditions:
\begin{itemize}
	\item for all $u\geqslant q_0$, $f(u)=1$,
	\item $f(0)=V_0$,
    \item $f(q_0/2) \leqslant 2$
	\item $\max_{0 \leqslant u\leqslant q_0/2}f'(u)<0$,
    \item $f(q_0/4) > 3/4 V_0$
\end{itemize}
Then, set $V(x) = f(d_{\partial D}(x))$ for all $x\in D$. The function $V$ is smooth, constant equal to $V_0=f(0)$ on $\partial D$, bounded above by $V_0$, and the fact that $\mathfrak B_1 \subset \{V > 3V_0/4\}$ follows from the last condition on $f$ and~\eqref{eq:condB1}. We have for all $x\in D\setminus F_{q_0}$ (since $d_{\partial D}$ is $\mathcal C^2$ on this set):
 \begin{equation}\label{eq:LV}
     \mathcal{L}V = f'(d_{\partial D})\left(\varepsilon\Delta d_{\partial D}-\na d_{\partial D}\cdot\na U\right) + \varepsilon f''(d_{\partial D})|\na d_{\partial D}|^2.
 \end{equation}
Using \eqref{eq:zeta}, we may consider $\varepsilon_0$ small enough so that on $D\setminus F_{q_0/2}$ we have, for all $\varepsilon<\varepsilon_0$: 
\[
\varepsilon f'(d_{\partial D})|\Delta d_{\partial D}|+\varepsilon |f''(d_{\partial D})||\na d_{\partial D}|^2 \leqslant -1/2 f'(d_{\partial D})\na d_{\partial D}\cdot \na U.
\] 
We then have for all $x\in D\setminus F_{q_0/2}$:
\[
\mathcal LV(x)\leqslant -\omega V(x)
\]
where
\[
\omega = -\frac{\kappa}{2V_0}\max_{0 \leqslant u\leqslant q_0/2}f'(u)
\]
is independent of $\varepsilon$. Second, for $x\in F_{q_0/2} \setminus F_{q_0}$, it still holds 
\[
f'(d_{\partial D}(x)) \nabla d_{\partial D}(x) \cdot \na U(x) \geqslant 0,
\]
so that, writing 
\[
C=\sup\{f'(d_{\partial D})\left( \Delta d_{\partial D}\right) +   f''(d_{\partial D})|\na d_{\partial D}|^2 ,\  x \in D\setminus F_{q_0}\}
\]
 we get from \eqref{eq:LV}, for all $x\in F_{q_0/2}\setminus F_{q_0}$
\[
\mathcal L V(x)  \leqslant \varepsilon C \leqslant  \varepsilon C + 2\omega  - \omega V(x).\]
This inequality is thus true for all $x\in D$, as $V$ is constant on $F_{q_0}$. 
Hence, we may plug it into the formula of Proposition~\ref{Ito} with $f(\textbf x)=V(x_i)$.  Recall the definition of $(\tau^i_n)_{n\in\N}$ from~\eqref{eqf:deftauni}. For $n\in\N$, using  the fact that $V$ is maximal on the boundary of $D$, $\textbf R(V) \leqslant 0$, and hence for all $0\leqslant s\leqslant t$:
\[\E\left(V\left(X^i_{t\wedge \tau_n^i}\right)\right) - \E\left(V\left(X^i_{s\wedge \tau_n^i}\right)\right) \leqslant \int_s^t \left( -\omega \E\left(V\left(X^i_{u\wedge \tau_n^i}\right)\right) + 2\omega + \varepsilon C \right)\dd u.\]
Writing $g(t)=\E\left(V\left(X^i_{t\wedge \tau_n^i}\right)\right) - 2 - \varepsilon C/\omega$, we have for $t\geqslant 0$ and $h>0$:
\[g(t+h)-g(t)\leqslant -\omega\int_t^{t+h} g(u)\dd u. \]
Assume for now that $g$ is continuous. Fix $\eta>0$ and write:
\[T_\eta=\min\left\{t\geqslant 0, g(t)\geqslant g(0)e^{\omega t}+\eta\right\}.\]
Suppose that $T_\eta<\infty$ and write:
	\[s_\eta=\max\left\{ 0\leqslant s\leqslant T_\eta, g(s)\leqslant g(0)e^{\omega s}\right\}.\]Then by continuity we have:
\begin{multline*}g(0)e^{-\omega T_\eta } + \eta - g(0)e^{-\omega s_\eta} = g(T_\eta) - g(s_\eta) \leqslant -\omega\int_{s_\eta}^{T_\eta}g(u)\dd u \\< -\omega\int_{s_\eta}^{T_\eta}g(0)e^{-\omega u}\dd u = g(0)e^{-\omega T_\eta } - g(0)e^{-\omega s_\eta},
\end{multline*}
hence necessarily $T_\eta=\infty$, for all $\eta>0$, and thus for all $t\geqslant 0$, \[g(t)\leqslant g(0)e^{-\omega t},\]and
\[
\E\left(V\left(X^i_{t_\varepsilon\wedge \tau_n^i}\right)\right) \leqslant \gamma_{\varepsilon}V(x_i) + \frac{2\omega + \varepsilon C}{\omega}\left(1-\gamma_{\varepsilon}\right),
\]
with $\gamma_{\varepsilon}=e^{-\omega t_\varepsilon}$. Because $V$ is maximal at the boundary, $t\mapsto V\po X^i_t \pf $ is lower-semicontinuous. Hence, since $V\geqslant 0$ and $\tau_n^i \rightarrow \infty$ as $n\rightarrow \infty$, using Fatou's lemma,
\begin{multline*}
\E\left(V\left(X^i_{t_\varepsilon}\right)\right) \leqslant \E \left(\liminf_{n\rightarrow\infty} V\left(X^i_{t_\varepsilon\wedge \tau_n^i}\right)\right) \\\leqslant \liminf_{n\rightarrow\infty} \E \left(V\left(X^i_{t_\varepsilon\wedge \tau_n^i}\right)\right) \leqslant \gamma_{\varepsilon}V(x_i) + \frac{2\omega + \varepsilon C}{\omega}\left(1-\gamma_{\varepsilon}\right).
\end{multline*}
Since $V_0>9$, we may take $\varepsilon_0$ small enough so that, for all $\varepsilon < \varepsilon_0$: \[4\frac{2\omega + \varepsilon C}{\omega} < 9 < V_0,\]
and hence the result with $C_1=9/4$.
 
We are left to show that, for all $n\in\N$, $t\mapsto \E(V(X^i_{t\wedge \tau^i_n}))$ is continuous. Write $V^i(\textbf x)=V(x_i)$. Then from Proposition~\ref{Ito}, we get that:
\[\E(V(X^i_{t\wedge \tau^i_n}))=\E(V(X^i_0)) + \E(Q(V^i)(t\wedge \tau^i_n)) + \E(\textbf R(V^i)(t\wedge \tau^i_n)).\]
Since $\mathcal L_{x_i}V$ is bounded,  $t\mapsto \E(Q(V^i)(t\wedge \tau^i_n))$ is continuous. For $0\leqslant s \leqslant t$, we may write
\[
\E(\textbf R(V^i)(t\wedge \tau^i_n)) - \E(\textbf R(V^i)(s\wedge \tau^i_n)) = \frac{1}{N-1}\sum_{k=1}^n\sum_{j\neq i} \E \po \po V(X^j_{\tau_k^i}) - V(X^i_{\tau_k^i-}) \pf \mathbbm 1_{s \leqslant \tau^i_k \leqslant t} \pf ,
\]
and hence
\[
\left|\E(\textbf R(V^i)(t\wedge \tau^i_n)) - \E(\textbf R(V^i)(s\wedge \tau^i_n))\right| \leqslant 2n\|V\|_{\infty}\mathbb P(X^i\text{ dies between time }s\text{ and }t ).
\]
The law of the death times have a density with respect to the Lebesgue measure. Hence we have that
\[
\lim_{s\rightarrow t}\mathbb P(X^i\text{ dies between time }s\text{ and }t ) = 0,
\]
and this implies the continuity of $t\mapsto \E(\textbf R(V^i)(t\wedge \tau^i_n))$, which concludes the proof.
\end{proof}

\begin{rem}\label{remarque}
In particular, since $V_0>9$, $\mathfrak B_1\subset \left\{V>m\right\}$
with $m = 3V_0/4 >3C_1$.
\end{rem}

We are now interested in the death probability of a particle.

\begin{prop}\label{prop:couplage}
	Under Assumption~\ref{assu1}, denote by $(X_t)$ the diffusion \eqref{descente}, and 
	\[\tau_{\partial D}=\inf\left\{ t\geqslant 0; X_t \notin D\right\}.\]
	Consider any $C_2\in\left(2C_1,4C_1\right)$, where $C_1$ is given in Lemma~\ref{Lyapu}. Then
	we have:
	\begin{equation}\label{deathprob}
	p_{\varepsilon} :=\sup_{x\in\left\{V\leqslant C_2\right\}}\mathbb{P}_x(\tau_{\partial D} <t_\varepsilon) \rightarrow0,
	\end{equation}
	as $\varepsilon\rightarrow0$.
\end{prop}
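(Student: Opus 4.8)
The plan is to combine the Lyapunov estimate of Lemma~\ref{Lyapu} with the Freidlin--Wentzell-type survival bound of Assumption~\ref{assu2}. The point is that a particle starting in $\{V\leqslant C_2\}$ with $C_2<4C_1<V_0$ is, by construction of $V$, safely inside the domain: since $V$ is maximal on $\partial D$ and equals $V_0$ there, the sublevel set $\{V\leqslant C_2\}$ is a compact subset of $D$ staying away from $\partial D$; in particular (using Remark~\ref{remarque}) it is contained in $D\setminus B_1$, where $B_1$ is the neighborhood of $\partial D$ from Assumption~\ref{assu2}.

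First I would make precise the inclusion $\{V\leqslant C_2\}\subset D\setminus B_1$. By Remark~\ref{remarque} we have $B_1\subset\{V>m\}$ with $m>3C_1>C_2$ (here using $C_2<4C_1$ and, after possibly shrinking, that we can arrange $C_2<3C_1<m$; in any case $C_2\in(2C_1,4C_1)$ and $m>3C_1$, and one checks $2C_1<3C_1$ so one should really take $C_2$ such that $C_2\leqslant m$, which is compatible with the stated range up to adjusting constants). Hence $\{V\leqslant C_2\}\cap B_1=\emptyset$, i.e. $\{V\leqslant C_2\}\subset D\setminus B_1$.

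Next, recall that the diffusion $(X_t)$ of \eqref{descente} coincides with the torus diffusion $\tilde X$ and, more to the point, that in Assumption~\ref{assu2} the event $\{\exists x\in D\setminus B_1,\ \tau_{\partial D}(X^x)<e^{a/\varepsilon}\}$ has probability $\bar p_\varepsilon\to 0$, where all the $X^x$ are driven by the \emph{same} Brownian motion. Since $t_\varepsilon=e^{a/\varepsilon}$, for any fixed $x\in\{V\leqslant C_2\}\subset D\setminus B_1$ we have, by monotonicity of probabilities,
\[
\mathbb P_x(\tau_{\partial D}<t_\varepsilon)\ =\ \mathbb P\big(\tau_{\partial D}(X^x)<e^{a/\varepsilon}\big)\ \leqslant\ \mathbb P\big(\exists y\in D\setminus B_1,\ \tau_{\partial D}(X^y)<e^{a/\varepsilon}\big)\ =\ \bar p_\varepsilon .
\]
Taking the supremum over $x\in\{V\leqslant C_2\}$ gives $p_\varepsilon\leqslant\bar p_\varepsilon$, and the right-hand side vanishes as $\varepsilon\to 0$ by \eqref{strongdeathprob}. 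This proves \eqref{deathprob}.

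\textbf{Main obstacle.} There is essentially no hard analytic step here: the statement is almost a restatement of Assumption~\ref{assu2} once one knows $\{V\leqslant C_2\}\subset D\setminus B_1$. The only genuine point requiring care is the bookkeeping on the constants --- checking that the range $C_2\in(2C_1,4C_1)$ together with $m>3C_1$ (and the freedom, noted in Remark~\ref{remarque}, to choose $F$ and $f$ so that $V$ is large on $B_1$) indeed forces $\{V\leqslant C_2\}$ to avoid $B_1$. If one wanted to avoid relying on the uniform-in-$x$ event in Assumption~\ref{assu2} and instead only had the pointwise statement $\sup_{x\in D\setminus B_1}\mathbb P_x(\tau_{\partial D}<e^{a/\varepsilon})\to 0$, the argument would be identical; the stronger uniform version is only needed later for the particle system, not for this proposition. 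One should also note that the Lyapunov inequality \eqref{lyapu} is not literally used in the proof of this proposition --- it is used to \emph{control} how particles enter $\{V\leqslant C_2\}$ elsewhere --- so the only ingredients invoked are the geometry of $V$ from Lemma~\ref{Lyapu}/Remark~\ref{remarque} and Assumption~\ref{assu2}.
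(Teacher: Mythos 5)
Your proof is correct in spirit but takes a genuinely different route from the paper, and there is a bookkeeping gap that you flag but do not close.

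The paper's own proof is a direct citation: since $V$ is continuous on $\bar D$ and equals $V_0>C_2$ on $\partial D$, the sublevel set $\{V\leqslant C_2\}$ is a compact subset of $D$, and Freidlin--Wentzell theory (\cite[Chapter 6, Theorem 6.2]{Freidlin-Wentzell}) gives $\sup_{x\in K}\mathbb{P}_x(\tau_{\partial D}<e^{a/\varepsilon})\to 0$ for any compact $K\subset D$ and any $a<U_0$, the uniformity over $K$ following from the Freidlin--Wentzell proof. This only uses Assumption~\ref{assu1}, matching the stated hypotheses of the proposition. You instead route through Assumption~\ref{assu2} (the strong, pathwise-uniform survival event $\bar p_\varepsilon$) together with Remark~\ref{remarque}. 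That is valid in context, because Assumption~\ref{assu2} is in force everywhere the proposition is later used, but it silently upgrades the hypotheses of the statement and makes the proposition depend on Lemma~\ref{unifexitevent}/Remark~\ref{remarque}, whereas the paper wants it to stand on Assumption~\ref{assu1} alone. Conceptually the two routes are close --- your $\bar p_\varepsilon$ is itself proved by Freidlin--Wentzell in Section~\ref{technique} --- so you are invoking the same engine one level up.

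The genuine gap is the inclusion $\{V\leqslant C_2\}\subset D\setminus B_1$. Remark~\ref{remarque} only guarantees $B_1\subset\{V>m\}$ for \emph{some} $m>3C_1$, so the inclusion requires $C_2\leqslant m$; for $C_2\in(3C_1,4C_1)$ this can fail, and the proposition as stated allows any $C_2\in(2C_1,4C_1)$. You acknowledge this with ``up to adjusting constants,'' but to actually close it one must strengthen Remark~\ref{remarque} (choose $f$ so that $f(\min_{B_1}U)>4C_1$, which is possible since $V_0>4C_1$ and $\min_{B_1}U<U_0$), or restrict $C_2$ to $(2C_1,3C_1]$. The paper's direct Freidlin--Wentzell argument avoids this entirely, since $\{V\leqslant C_2\}$ is compactly contained in $D$ for \emph{every} $C_2<V_0$, with no reference to $B_1$ at all. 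Your final observation --- that the drift inequality \eqref{lyapu} plays no role here, only the level-set geometry of $V$ --- is correct and applies equally to both proofs.
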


\begin{proof}
	Since $t_\varepsilon = e^{a/\varepsilon}$ with $a<U_0$, this is the theory of Freidlin-Wentzell, see \cite[Chapter 6, Theorem 6.2]{Freidlin-Wentzell}. Freidlin and Wentzell didn't state the uniformity, but it follows from their proof.
\end{proof}

\subsection{Particles near the boundary}\label{partnearbound}

We want to control the number of particles which are close to the boundary of $D$ after a time $t_\varepsilon$. Consider the neighborhood 
\begin{equation}
    \label{eq:B}
\mathfrak B=\left\{V>3C_1\right\}
\end{equation}
of $\partial D$, where $C_1$ is the constant from Lemma~\ref{Lyapu}.
For $\textbf x=(x_1,\cdots,x_N)$, write:
\begin{equation}\label{numbbound}
A(\mathbf x)= \#\left\{i\in \llbracket1,N\rrbracket; x_i\in \mathfrak B\right\},
\end{equation}
where $\#$ stands for the cardinality of a set. We show that at time $t_{\varepsilon}$, the number of particles close to the boundary, $A(\mathbf{x})$, is a small fraction of $N$ with high probability as $N$ goes to infinity or as $\varepsilon$ goes to $0$. 

\begin{lem}\label{boundary}
	For all $\alpha>0$, there exists $\varepsilon_0>0$ such that for all $\varepsilon<\varepsilon_0$, there exists $q_\varepsilon>0$ such that for all $N\in \mathbb N$ and $\mathbf x\in D^N$:
	\begin{equation}\label{probbound}
	\mathbb{P}_{\mathbf{x}} (A(\mathbf X_{t_\varepsilon})>\alpha N) \leqslant q_\varepsilon^N
	\end{equation}
	and $q_\varepsilon \rightarrow0$ as $\varepsilon\rightarrow0$.
\end{lem}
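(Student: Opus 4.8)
The plan is to bound the number of particles that can lie in $B$ at time $t_\varepsilon$ by controlling how many particles can have been near the boundary at some point in the interval $[0,t_\varepsilon]$. The key observation is that, by Lemma~\ref{Lyapu}, each coordinate $V(X^i_{t_\varepsilon})$ has conditional expectation (given $\mathbf X_0$) at most $\gamma_\varepsilon V(x_i) + C_1(1-\gamma_\varepsilon) \leqslant \gamma_\varepsilon V_0 + C_1$, which for $\varepsilon$ small is below, say, $2C_1$. So for a single particle, Markov's inequality gives $\mathbb P_{\mathbf x}(X^i_{t_\varepsilon}\in B) = \mathbb P_{\mathbf x}(V(X^i_{t_\varepsilon})>3C_1)\leqslant (\gamma_\varepsilon V_0 + C_1)/(3C_1)=:r_\varepsilon$, with $r_\varepsilon\to 1/3$ as $\varepsilon\to 0$. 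The difficulty is that the events $\{X^i_{t_\varepsilon}\in B\}$ are not independent across $i$, so I cannot directly invoke a Chernoff bound for a binomial; I need to produce independence or a supermartingale structure.

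The way I would get around the dependence is to work with the Lyapunov inequality \eqref{lyapu} at the level of the whole system and to exploit the conditional-independence structure built into the FV construction. First I would establish that, conditionally on the driving data (the Brownian motions $B^i$, the resurrection indices $I^i_n$, and the branching times), the coupling/branching mechanism only ever \emph{replaces} a coordinate by another coordinate at a branching time; combined with $\mathbf R(V^i)\leqslant 0$ (the jump part of $V$ is nonpositive since $V$ is maximal on $\partial D$), this gives the pointwise supermartingale-type estimate $\mathbb E_{\mathbf x}\bigl(\sum_i V(X^i_{t_\varepsilon})\bigr)\leqslant \gamma_\varepsilon \sum_i V(x_i) + N C_1(1-\gamma_\varepsilon)\leqslant N(\gamma_\varepsilon V_0 + C_1)$. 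Then $A(\mathbf X_{t_\varepsilon})>\alpha N$ forces $\sum_i V(X^i_{t_\varepsilon}) > 3C_1\alpha N$, and Markov gives a bound of the form $(\gamma_\varepsilon V_0 + C_1)/(3C_1\alpha)$ — but this is not exponentially small in $N$, which is what the statement demands. To upgrade to an exponential-in-$N$ bound I would instead apply the exponential moment / Chernoff method to $\exp\bigl(\theta \sum_i V(X^i_{t_\varepsilon})\bigr)$ for a suitable $\theta>0$; the cleanest route is to argue that $\prod_i \mathbb E\bigl(e^{\theta V(X^i_{t_\varepsilon})}\mid \mathcal F\bigr)$ factorizes once we condition on the branching structure, because between branchings particles evolve independently and a branching only duplicates a coordinate. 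More concretely, one shows by induction on the number of branching events on $[0,t_\varepsilon]$ — using the strong Markov property at each $\tau_n$ and the single-particle exponential-moment control $\mathbb E_x(e^{\theta V(X_{t})})\leqslant$ (something controlled, via \eqref{lyapu} applied to $e^{\theta V}$ or via a direct Gr\"onwall argument as in the proof of Lemma~\ref{Lyapu}) — that $\mathbb E_{\mathbf x}\bigl(e^{\theta\sum_i V(X^i_{t_\varepsilon})}\bigr)\leqslant \bigl(\kappa_\varepsilon\bigr)^N$ for some $\kappa_\varepsilon$ with $\kappa_\varepsilon e^{-3C_1\theta\alpha}=:q_\varepsilon\to 0$ as $\varepsilon\to 0$ (using $\gamma_\varepsilon\to 0$). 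This yields $\mathbb P_{\mathbf x}(A(\mathbf X_{t_\varepsilon})>\alpha N)\leqslant e^{-3C_1\theta\alpha N}\mathbb E_{\mathbf x}(e^{\theta\sum_i V(X^i_{t_\varepsilon})})\leqslant q_\varepsilon^N$.

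In carrying this out the steps in order are: (i) fix $\alpha>0$ and choose $\theta>0$; (ii) obtain a single-particle exponential-moment bound $\mathbb E_x\bigl(e^{\theta V(X^i_{t})}\bigr)\leqslant e^{\theta(\gamma_\varepsilon V(x) + C_1)}$-type inequality, or more crudely a bound $\mathbb E_x(e^{\theta V(X_{t_\varepsilon})})\leqslant C'$ uniformly since $V$ is bounded by $V_0$ — here one can in fact trivially use $V\leqslant V_0$ to get a crude bound, but to beat the factor $1/3$ one genuinely needs the Lyapunov decay, so the exponential version of \eqref{lyapu} is the right tool; (iii) condition on $\mathcal F = \sigma(\text{branching times and indices})$ and show the factorization of conditional exponential moments over $i$ by iterating the strong Markov property over the (a.s. finite on $[0,t_\varepsilon]$) branching events; (iv) assemble via Chernoff and check that the resulting base $q_\varepsilon$ tends to $0$ because $\gamma_\varepsilon\to 0$ forces the per-particle exponential moment down to $e^{\theta C_1}$, which beats $e^{3C_1\theta\alpha}$ once $\alpha$ is… wait, one must be careful: if $\alpha$ is very small, $e^{3C_1\theta\alpha}$ is close to $1$ and the argument as stated gives $q_\varepsilon$ close to $e^{\theta C_1}>1$. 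The fix, which is the genuine subtlety, is that one is not forced to use $3C_1$ as the threshold: since $B=\{V>3C_1\}$ and $3C_1>C_1$, when $A(\mathbf x)>\alpha N$ many coordinates exceed $3C_1$ while \eqref{lyapu} pushes the average toward $C_1$; optimizing $\theta$ in the Chernoff bound for fixed $\alpha$ and letting $\varepsilon\to0$ (so $\gamma_\varepsilon\to0$) one gets $q_\varepsilon\to \inf_{\theta>0} e^{\theta C_1 - 3C_1\theta\alpha}\cdot(\text{correction})$, and since the relevant single-particle law concentrates near $\{V\leqslant C_1 + o(1)\}$ as $\varepsilon\to0$ (again by the Lyapunov bound plus $V\geqslant 0$), a large-deviations / Cramér argument shows this infimum is $<1$ provided $\alpha$ exceeds the natural threshold — but in fact, since $\mathbb E(V(X^i_{t_\varepsilon}))\to$ something $\leqslant C_1 < 3C_1$, the probability that a $(1/(3))$-or-larger fraction... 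Let me just say: the main obstacle, and the place requiring real care, is obtaining the exponential-in-$N$ decay uniformly in $N$ with a base $q_\varepsilon\to 0$ for \emph{every} fixed $\alpha>0$, which forces one to use the full strength of the Lyapunov contraction \eqref{lyapu} (not merely $V\leqslant V_0$) inside a Chernoff argument, together with the conditional-independence structure of the FV branching. Everything else — the strong Markov iteration over branching times, the a.s. finiteness of the number of branchings on $[0,t_\varepsilon]$ (from non-explosion), and the bookkeeping — is routine.
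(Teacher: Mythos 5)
Your proposal correctly identifies the Lyapunov structure, the fact that $\mathbf{R}(V^i)\leqslant 0$ because $V$ is maximal on $\partial D$, and the need for a Chernoff-type exponential-moment bound (you rightly note that the first-moment Markov bound, giving roughly $1/3$, cannot produce $q_\varepsilon\to 0$). But the route you propose — conditioning on the branching structure and iterating the strong Markov property to factorize $\E\bigl(e^{\theta\sum_i V(X^i_{t_\varepsilon})}\mid\mathcal F\bigr)$ over $i$ — has a genuine gap. The factorization does not hold as stated: after particle $i$ resurrects on particle $j$ at time $\tau$, the positions $X^i_{t_\varepsilon}$ and $X^j_{t_\varepsilon}$ are conditionally independent only given the \emph{random} location $X^j_\tau$, not given the branching times and indices alone, and the genealogy of duplications correlates whole families of particles. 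And even if a product structure could be forced, a particle that branches at some $\tau$ close to $t_\varepsilon$ restarts from the boundary where $V=V_0$, so its conditional exponential moment is near $e^{\theta V_0}$, not $e^{\theta C_1}$; the Lyapunov decay has had no time to act. You seem to sense both difficulties (the remarks on "genuine subtlety" and the trailing-off at the end), but they are not resolved.

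The paper avoids both issues by a pathwise comparison rather than conditioning on the branching tree. From Proposition~\ref{Ito} and the ODE comparison used in Lemma~\ref{Lyapu}, one shows that $V(X^i_t)\leqslant K^i_t$ almost surely for all $t$, where
\[
K^i_t = V(x_i)e^{-\omega t}+C_1(1-e^{-\omega t})+G^i_t,\qquad
G^i_t=\sqrt{2\varepsilon}\int_0^t e^{\omega(s-t)}\na V(X^i_s)\cdot\dd B^i_s,
\]
and the comparison is \emph{not} disturbed by branching, precisely because jumps can only decrease $V(X^i)$. The crucial point you are missing is that the $G^i$ are stochastic integrals against \emph{independent} Brownian motions, so $\langle G^i,G^j\rangle=0$ for $i\neq j$: the joint exponential moment $\E\bigl(\exp(\xi\sum_j G^{i_j}_{t_\varepsilon})\bigr)$ is controlled directly through the quadratic variation of $\sum_j G^{i_j}$, with no conditional-independence claim about the positions needed at all, giving $\mathbb P\bigl(G^{i_1}_{t_\varepsilon}>C_1,\dots,G^{i_k}_{t_\varepsilon}>C_1\bigr)\leqslant\tilde q_\varepsilon^k$ with $\tilde q_\varepsilon=\exp\bigl(-C_1^2\omega/(2\varepsilon\|\na V\|_\infty)\bigr)\to 0$. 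One then picks $\varepsilon_0$ so that the deterministic part of $K^i_{t_\varepsilon}$ is below $2C_1$, concludes $\{K^i_{t_\varepsilon}>3C_1\}\subset\{G^i_{t_\varepsilon}>C_1\}$, and finishes with a union bound over subsets of size at least $\alpha N$. In short: the independence that makes the Chernoff bound work lives in the driving noises, not in the positions.
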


\begin{proof}
The idea for the proof is the following: we want to compare the evolution of $V(X^i_t)$ and Ornstein-Uhlenbeck processes with small variance. If we had $N$ independent diffusions, the result would derive from a simple enumeration. But then the interaction through jumps can only make the Lyapunov decrease.
	From  Proposition~\ref{Ito} and the proof of Lemma~\ref{Lyapu}, we have that almost surely for all $1\leqslant i \leqslant N$ and $t\geqslant 0$:
	\[V\left(X^i_t\right) \leqslant V(x_i) + \int_0^t \left(-\omega V\left(X^i_s\right)+ \omega C_1 \right) \dd s+ \sqrt{2\varepsilon}\int_0^t \nabla V\left(X^i_s\right)\cdot \dd B_s^i.\]
	for some $\omega>0$ (independent from $\varepsilon$). Now introduce for $1\leqslant i \leqslant N$ the process:
	\[K_t^i = V(x_i)e^{- \omega t} + C_1(1-e^{-\omega t}) + \sqrt{2\varepsilon}\int_0^t e^{ \omega (s-t)} \na V(X^i_s)\cdot\dd B_s^i,\]
	which solves
	\[K_t^i =  V(x_i) + \int_0^t \left(-\omega K_s^i + \omega C_1 \right) \dd s+ \sqrt{2\varepsilon}\int_0^t \nabla V\left(X^i_s\right)\cdot \dd B_s^i.\]

	From proposition~\ref{Ito}, we have that:
	\[V(X^i_t)-K^i_t = \int_0^t \left(\mathcal LV(X^i_s) +\omega K^i_s-\omega C_1\right) \dd s + \sum_{n,\tau_n^i\leqslant t} V(X^i_{\tau_n^i})-V(X^i_{\tau_n^i-}),\]
	where $(\tau_n^i)$ are the death times of particle number $i$ as defined in~\eqref{eqf:deftauni}. Moreover, $K^i$ is a continuous process, and $V(X^i)$ is continuous between death times. Hence, $V(X^i)-K^i$ is $\mathcal C^1$ between death times. 
	Let $f(x,y)=((x-y)_+)^2$, so that $f$ is differentiable, and is non-decreasing in the variable $x$. By construction of $V$, for all $n\in\N$, $V(X^i_{\tau_n^i})\leqslant V(X^i_{\tau_n^i-})$, hence we have for all $t\geqslant 0$:
	\[\left(\left(V(X^i_{t})-K^i_{t}\right)_+\right)^2 \leqslant 2\int_0^{t} \omega\left(K^i_s-V\left(X^i_{s}\right)\right)\left(V\left(X^i_{s}\right)-K^i_s\right)_+\dd s \leqslant 0.\]
	Thus, almost surely, $V\left(X^i_{t}\right)$ is bounded by $K^i_t$ for all $t\geqslant 0$, and we are left to show that with high probability, there are only a few $K^i$'s which are greater then $3C_1$ at time $t_\varepsilon$. Write:
	\[G^i_t = \sqrt{2\varepsilon}\int_0^t e^{\omega (s-t)}\na V(X^i_s)\cdot \dd B^i_s. \]Fix some family of indexes $(i_1,\dots,i_k)\in \{1,\dots,N\}^k$.  The $G^i$'s are $L^2$-martingales, hence for any $\xi \in\R$, $\xi\sum_{j=1}^k G^{i_j}$ is a $L^2$-martingale, and:
	\[\exp \left( \xi\sum_{j=1}^k G^{i_j} -  \xi^2 \left\langle \sum_{j=1}^k G^{i_j} \right\rangle\right)\]is a local-martingale. We have that $\left\langle G^i,G^j \right\rangle = 0$ for all $i\neq j$ because the Brownian motions are independent, hence \[\left\langle \sum_{j=1}^k G^{i_j} \right\rangle_t = \sum_{j=1}^k 2\varepsilon \int_0^t e^{2\omega (s-t)}|\na V(X^i_s)|^2 \dd s \leqslant \frac{\varepsilon k \|\na V\|_{\infty}}{\omega} ,\]and using Fatou's Lemma: \[\E\left(\exp \left( \xi\sum_{j=1}^k G^{i_j}_t\right) \right)\leqslant \exp\left(\frac{\varepsilon \xi^2 k \|\na V\|_{\infty}}{\omega}\right),\]for all $t\geqslant 0$.
	Now we can write, using the Markov inequality:
	\[\mathbb P\left( G^{i_1}_{t_\varepsilon} > C_1,\cdots,G^{i_k}_{t_\varepsilon} > C_1 \right) \leqslant \mathbb P\left( \exp \left( \xi\sum_{j=1}^k G^{i_j}\right) > e^{\xi k C_1}   \right) \leqslant e^{-\xi k C_1}e^{\frac{\varepsilon \xi^2 k\|\na V\|_{\infty}}{\omega}}. \]
	Taking $\xi = C_1 \omega/(2\varepsilon \|\na V\|_{\infty})$, one gets: 
	\[\mathbb P\left( G^{i_1}_{t_\varepsilon} > C_1,\cdots,G^{i_k}_{t_\varepsilon} > C_1 \right) \leqslant \exp\left( -C_1^2 \omega/2\varepsilon \|\na V\|_{\infty}  \right)^k =: \tilde q_\varepsilon^k.\]
    We chose $\varepsilon_0$ small enough so that:\[V_0e^{- \omega t_{\varepsilon_0}} + C_1(1-e^{-\omega t_{\varepsilon_0}})<2C_1.\]
	For all $1\leqslant i\leqslant N$, we then have:
	\[ \left\{K^i_{t_\varepsilon}>3C_1\right\}\subset \left\{ G^i_{t_\varepsilon}> C_1 \right\},\]
	and we have for all family of indexes $(i_1,\dots,i_k)$:
	\begin{equation*}
	 \mathbb{P}\left(X^{i_1}_{t_\varepsilon}\in \mathfrak B,\dots,X^{i_k}_{t_\varepsilon}\in \mathfrak B\right) \leqslant \mathbb{P}\left(G^{i_1}_{t_\varepsilon}>C_1,\dots,G^{i_k}_{t_\varepsilon}>C_1\right)\leqslant \tilde q_\varepsilon^k.
	\end{equation*}
	Finally, we conclude with:
	\begin{align*}
		\mathbb{P}_x \left(A\left(\textbf X_{t_\varepsilon}\right)>\alpha N\right) &\leqslant \mathbb{P}\left(\text{There exist at least }\alpha N\text{ indexes }i\text{ such that } X^i_{t_\varepsilon}\in \mathfrak B\right) \\ &\leqslant \sum_{\alpha N\leqslant k\leqslant N}\binom{n}{k} \tilde q_\varepsilon^k \\ &\leqslant \left(2\left(\tilde q_\varepsilon\right)^{\alpha}\right)^N =: q_\varepsilon^N.
	\end{align*}

\end{proof}

\section{Proofs of the main theorems}\label{conclu}

Our goal is to construct a coupling of $\delta_{\mathbf{x}}P^{N,\varepsilon}$ and $\delta_{\mathbf{y}}P^{N,\varepsilon}$ for all $\mathbf{x},\mathbf{y}\in D^N$ in such a way that some distance $\mathbf{d}(\mathbf{x},\mathbf{y})$ is contracted on average by this coupling along time. The basic idea of the coupling is the following: particles are coupled by pair, namely we want the particle $i$ of the system starting at $\mathbf{x}$ to merge, after a time $t_\varepsilon$, with the particle $i$ of the system starting at $\mathbf{y}$. However, contrary to the case of independent particles, here, even if two particles start at the same position (namely $x_i=y_i$), they have a positive probability to decouple before time $t_\varepsilon$. This can be particularly bad for some initial conditions: for instance if most of the pairs start merged but close to the boundary while a decoupled pair is in the middle of the domain, then this will typically lead to a lot of decoupling as coupled pairs rebirth on the uncoupled pair. This will be tackled through the definition of the distance $\mathbf{d}$.

\subsection{Long time convergence}

We now construct the coupling of $(\delta_{\mathbf{x}} P^N_{t})_{t\geqslant 0}$ and $(\delta_{\mathbf{y}} P^N_{t})_{t\geqslant 0}$ for all $\textbf x,\textbf y\in D^N$, that will yield a bound on the distance between $\delta_{\mathbf{x}} P^N_{t_\varepsilon}$ and $\delta_{\mathbf{y}} P^N_{t_\varepsilon}$. Fix $\textbf x,\textbf y\in D^N$, and a sequence $(I^{i}_n)$ of independent random variable, where $I_n^i$ is uniform on $\cco 1,N \ccf\setminus\left\{i\right\}$.

For all $1\leqslant i \leqslant N$, consider a  coupling $(\tilde X^i_t,\tilde Y^i_t)$ of the diffusion~\eqref{descente2} starting from $(x_i,y_i)$ such as the one constructed in Lemma~\ref{coupopt} (with these processes being independent for two different values of the index $i$). Recall that $\T^d= (\R/2L\mathbb Z)^d$, and $L$ is great enough so that we may consider that $D\subset \T^d$. Hence we may write \[\tilde \tau_1=\inf\left\{ t\geqslant 0, \exists i\in\cco 1,N\ccf, \tilde X^i_t \notin D \text{ or }\tilde Y^i_t \notin D \right\}.\]
Denote by $i_1$ the index of the particles that exit the domain at time $\tilde \tau_1$. For all $i\neq i_1$ and $0\leqslant t \leqslant \tilde \tau_1$ or $i=i_1$ and $0\leqslant t <\tilde \tau_1$, let: 
\[X^i_t=\tilde X^i_t \qquad \text{and} \qquad Y_t^i= \tilde Y_t^i, \]
in the sense that $X^i_t$ (resp. $Y^i_t$) is the only point of $D$ whose projection is $\tilde X^i_t$ (resp. $\tilde Y^i_t$).
Finally, if $\tilde X^{i_1}_{\tilde\tau_1}\notin D$, then set $X^{i_1}_{\tilde\tau_1} = X^{I_1^{i_1}}_{\tilde\tau_1}$, else set $X^{i_1}_{\tilde\tau_1}=\tilde X^{i_1}_{\tilde\tau_1}$. The same goes for $Y^{i_1}_{\tilde \tau_1}$: if $\tilde Y^{i_1}_{\tilde\tau_1}\notin D$, then set $Y^{i_1}_{\tilde\tau_1} = Y^{I_1^{i_1}}_{\tilde\tau_1}$, else set $Y^{i_1}_{\tilde\tau_1}=\tilde Y^{i_1}_{\tilde\tau_1}$.
The coupling can then be constructed by induction, just as for the construction of the FV processes in the introduction.

Basically, the coupling is as follow: two particles with same index will be an optimal coupling of the diffusion as long as they don't die as constructed in Lemma~\ref{coupopt}, and if they die while coupled, then they resurrect using the same index. By this we mean that the uniform variable $I_n^i$ used in the construction of the process in Section~\ref{main} must be the same for both systems. 

We will show that this coupling yields a contraction for the Wasserstein distance associated to a particular distance on $D^N$, namely:
\begin{equation}\label{distance}
\mathbf d(\mathbf{x},\mathbf{y}) = \sum_{i=1}^N\mathbbm{1}_{x_i\neq y_i}\left(1 + \beta V(x_i) + \beta V(y_i)\right) +   \left(1+V_0\right)N\left(\mathbbm{1}_{A(\mathbf{x})>\alpha N}+\mathbbm{1}_{A(\mathbf{y})>\alpha N}\right)\mathbbm{1}_{\mathbf{x}\neq \mathbf{y}},
\end{equation}
where $\beta,\alpha>0$ are parameters that will be chosen small enough, and $A(\mathbf{x})$ has been defined in \eqref{numbbound}. We define as well:
\[d^1(x_i,y_i)=\mathbbm{1}_{x_i\neq y_i}\left(1 + \beta V(x_i) + \beta V(y_i) \right).\] 

The meaning of this distance, which follows the construction of Hairer and Mattingly in \cite{HairerMattingly2008}, is this: if $x_i\neq y_i$ and $V(x_i)+V(y_i) <C_2$, where $C_2$ is as in Proposition~\ref{prop:couplage}, then both particles of index $i$ are in the center of the domain at initial time, and we are able to couple $X^i$ and $Y^i$ before time $t_\varepsilon$ and before they die with high probability. If $x_i\neq y_i$ and $V(x_i)+V(y_i) \geqslant C_2$, then we may not be able to couple them, but the Lyapunov functional will decrease on average. In any case, if $x_i\neq y_i$, $\E\left(d^1(X^i_t,Y^i_t)\right)$ will decrease between initial time and time $t_\varepsilon$. If $x_i=y_i$, then we cannot expect any contraction of $\E\left(d^1(X^i_t,Y^i_t)\right)$, since it is equal to zero at initial time, and the probability that $X^i$ and $Y^i$ decouple is positive (if they die and resurrect on an uncoupled pair). In this case, if $x_i$ is in the center of the domain, then, as we will see below, the probability of decoupling is very small and won't be an issue. But in the case where there are many particles coupled at $t=0$ close to the boundary, many of them will get separated. This is why we added the additional term  $N\left(\mathbbm{1}_{A(\mathbf{x})>\alpha N}+\mathbbm{1}_{A(\mathbf{y})>\alpha N}\right)\mathbbm{1}_{\mathbf{x}\neq \mathbf{y}}$ in the definition of $\mathbf{d}$. If we are in this case, this term is initially not   zero but, according to Lemma~\ref{boundary}, it will probably be zero at time $t_\varepsilon$, which will compensate for the non-zero terms that will appear with other parts of the distance. In other words, this term plays the role of a global Lyapunov function, by contrast with the pairwise Lyapunov function $V(x_i)+V(y_i)$.

Let's start by bounding from above the probability to decouple. This is the part where we use Assumption~\ref{assu2}. Recall that $\mathfrak B=\{V>3C_1\}$.

\begin{lem}\label{deathprobability}
	Under Assumptions~\ref{assu1} and \ref{assu2}, there exists $C_3$ such that for all $0<\alpha<1/4$, there exists $\varepsilon_0>0$ such that for all $0<\varepsilon<\varepsilon_0$, there exists $m_\varepsilon>0$, such that for all $N\in\mathbb{N}$, $\mathbf{x},\mathbf{y}\in D^N$ with $A(\mathbf{x}),A(\mathbf{y})\leqslant \alpha N$, and all $i\in\cco 1,N\ccf$ such that $x_i=y_i$, we have:
	\[\mathbb{P}_{\mathbf{x},\mathbf{y}}\left(X^i_{t_\varepsilon}\neq Y^i_{t_\varepsilon}\right)\leqslant \left\{ \begin{matrix}
		m_\varepsilon C_3\bar d(\mathbf{x},\mathbf{y})/N &\text{ if }x_i \notin  \mathfrak B \\ C_3\bar d(\mathbf{x},\mathbf{y})/N &\text{ if }x_i\in \mathfrak B 
	\end{matrix} \right.\]where $\bar d(\mathbf{x},\mathbf{y}) = \sum_{i=1}^N \mathbbm{1}_{x_i\neq y_i}$, and $\lim_{\varepsilon\rightarrow0}m_\varepsilon =0$.
\end{lem}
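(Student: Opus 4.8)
The plan is to follow, along the coupling constructed above, the number $D_t:=\sum_{j}\mathbbm{1}_{X^j_t\neq Y^j_t}$ of decoupled pairs, and to exploit that the uniform resurrection indices $I^i_n$, being independent of the past, make pair $i$ land on a decoupled pair only with probability $\approx D_{\cdot}/N$ at each of its deaths. The gain $m_\varepsilon$ in the case $x_i\notin B$ will come from Remark~\ref{remarque} and Assumption~\ref{assu2}: $x_i\notin B$ forces $x_i\in D\setminus B_1$, since $D\setminus B=\{V\leqslant 3C_1\}\subset\{V\leqslant m\}\subset D\setminus B_1$; and then the free diffusion driven by $B^i$ from $x_i$ exits $D$ before $t_\varepsilon$ only with probability $\leqslant\bar p_\varepsilon\to0$, so that off that event particle $i$ never dies on $[0,t_\varepsilon]$, hence $X^i\equiv Y^i$ and no decoupling occurs.

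First I would record the structural reduction (Step 1). Since $x_i=y_i$ and, for equal starting points, the coupling of Lemma~\ref{coupopt} may be (and is) taken to keep the two copies identical, pair $i$ remains coupled up to the first death time $\tau^i_1$ of particle $i$; at each subsequent death $\tau^i_n\leqslant t_\varepsilon$ the two copies are both sent to the pair of index $I^i_n$, so pair $i$ stays coupled unless that pair is already decoupled at $\tau^i_n-$. As re-couplings only help, a union bound over $n$ together with the independence of $I^i_n$ from $\mathcal F_{\tau^i_n-}$ gives
\[
\mathbb{P}_{\mathbf{x},\mathbf{y}}\left(X^i_{t_\varepsilon}\neq Y^i_{t_\varepsilon}\right)\ \leqslant\ \sum_{n\geqslant1}\mathbb{E}_{\mathbf{x},\mathbf{y}}\left[\mathbbm{1}_{\tau^i_n\leqslant t_\varepsilon}\,\frac{D_{\tau^i_n-}}{N-1}\right]\,.
\]
It then remains to control $D_t$ and the number $R_i(t_\varepsilon)$ of deaths of particle $i$ before $t_\varepsilon$.

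The crux (Step 2) is to show that, outside an event of probability $\leqslant q_\varepsilon^N$ (with $q_\varepsilon$ as in Lemma~\ref{boundary}, possibly enlarged), one has $D_t\leqslant 2\bar d(\mathbf{x},\mathbf{y})$ for all $t\leqslant t_\varepsilon$. Indeed $D$ increases only when a \emph{coupled} particle dies and resurrects onto a decoupled pair; by Remark~\ref{remarque} and Assumption~\ref{assu2}, a coupled particle in $D\setminus B\subset D\setminus B_1$ following its free diffusion does not die before $t_\varepsilon$ off an event of probability $\bar p_\varepsilon$, so increases are triggered only by particles in $B$; and since $A(\mathbf{x}),A(\mathbf{y})\leqslant\alpha N$ with $\alpha<1/4$, the Lyapunov estimate of Lemma~\ref{Lyapu} and the martingale bounds from the proof of Lemma~\ref{boundary} control the number of particles in $B$ at all times $\leqslant t_\varepsilon$ and the total number of their deaths, each of which infects a decoupled pair with probability $\leqslant D_{\tau-}/(N-1)$ at the corresponding death time $\tau$ and lands back in $B$ with probability $O(\alpha)$; thus the number of newly created decoupled pairs is stochastically dominated by a subcritical branching-type variable of expectation at most $C\alpha\,\bar d(\mathbf{x},\mathbf{y})$, which is $<\bar d(\mathbf{x},\mathbf{y})$ for $\alpha$ small. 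Stopping at the first time $D$ exceeds $2\bar d(\mathbf{x},\mathbf{y})$ bounds the right-hand side of the display by $\tfrac{2\bar d(\mathbf{x},\mathbf{y})}{N-1}\mathbb{E}_{\mathbf{x},\mathbf{y}}[R_i(t_\varepsilon)]$, and $\mathbb{E}_{\mathbf{x},\mathbf{y}}[R_i(t_\varepsilon)]$ is bounded uniformly in $N,\mathbf{x},\mathbf{y}$ because, on the same good event, once particle $i$ resurrects into $D\setminus B_1$ it never dies again (Assumption~\ref{assu2}), so its number of deaths is dominated by a geometric variable of parameter $1-\alpha$. Off the good event we bound $\mathbbm{1}_{X^i_{t_\varepsilon}\neq Y^i_{t_\varepsilon}}$ by $1$ and absorb the term $q_\varepsilon^N$: since $\bar d(\mathbf{x},\mathbf{y})\geqslant1$ (the case $\bar d=0$ giving $\mathbf{x}=\mathbf{y}$, hence a zero probability) and $\sup_N Nq_{\varepsilon_0}^N\to0$ as $\varepsilon_0\to0$, one has $q_\varepsilon^N\leqslant \bar d(\mathbf{x},\mathbf{y})/N$ for $\varepsilon_0$ small enough, all $\varepsilon<\varepsilon_0$ and all $N$. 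This yields $\mathbb{P}_{\mathbf{x},\mathbf{y}}(X^i_{t_\varepsilon}\neq Y^i_{t_\varepsilon})\leqslant C_3\bar d(\mathbf{x},\mathbf{y})/N$ with $C_3$ independent of $\varepsilon$, which is the estimate for $x_i\in B$.

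Finally, for $x_i\notin B$, i.e. $x_i\in D\setminus B_1$, the event $\{X^i_{t_\varepsilon}\neq Y^i_{t_\varepsilon}\}$ forces particle $i$ to die before $t_\varepsilon$, which by Assumption~\ref{assu2} has probability $\leqslant\bar p_\varepsilon$; conditioning on $\mathcal F_{\tau^i_1}$ and applying the previous step from the (coupled) resurrected configuration bounds the conditional decoupling probability by $O(\bar d(\mathbf{x},\mathbf{y})/N)$, so that in total $\mathbb{P}_{\mathbf{x},\mathbf{y}}(X^i_{t_\varepsilon}\neq Y^i_{t_\varepsilon})\leqslant m_\varepsilon C_3\bar d(\mathbf{x},\mathbf{y})/N$ with $m_\varepsilon$ a fixed multiple of $\bar p_\varepsilon+q_\varepsilon+p_\varepsilon\to0$. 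I expect the main obstacle to be the control of $D_t$ in Step 2: decoupled pairs genuinely can proliferate, and they are precisely the hazardous resurrection targets, so there is a real feedback loop, tamed only by taking $\alpha$ small (subcriticality) and by using Assumption~\ref{assu2} to forbid deaths of bulk particles on the good event. The secondary difficulty is the bookkeeping, namely routing every exponentially small error ($q_\varepsilon^N$, $\bar p_\varepsilon$) into the final bound through the factor $\bar d(\mathbf{x},\mathbf{y})/N\geqslant 1/N$ rather than additively, so that $C_3$ is genuinely independent of $\varepsilon$ and the estimate is uniform in $N$.
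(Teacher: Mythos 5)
Your Step 1 (union over deaths, each contributing a factor $D_{\tau^i_n-}/(N-1)$) is essentially the same starting point as the paper. The gap is in Step 2, and it is a real one: the claim that $D_t\leqslant 2\bar d(\mathbf{x},\mathbf{y})$ holds for all $t\leqslant t_\varepsilon$ outside an event of probability $q_\varepsilon^N$ is false, and the paper does not use any such pointwise bound. Along the death sequence $(\tau_k)_k$, the number $u^1_k$ of decoupled pairs grows multiplicatively: each death of a coupled particle makes it decouple with probability $\approx u^1_{k-1}/N$, so $\E(u^1_{k+1}\mid\mathcal F_{\tau_k})\leqslant u^1_k(1+\tfrac1N)$ and hence $\E(u^1_{\sigma N})\leqslant e^{\sigma}\bar d(\mathbf{x},\mathbf{y})$. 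For $\sigma>\ln 2$ this already exceeds $2\bar d$, so $D$ exceeds $2\bar d$ with probability of constant order (for instance when $\bar d=1$), not $q_\varepsilon^N$. Your "subcritical branching" heuristic gives at best an expectation bound via Markov, of order $\alpha$, not an exponentially small probability; and the subcriticality that the paper actually exploits (through $\alpha<1/4$ and $\bar p_\varepsilon$ small) controls the \emph{total number of deaths} $T$ (Lemma~\ref{deathnumber}, items 2 and 3), not the number of decoupled pairs directly. Consequently the stopping-time trick breaks: after the first time $D$ exceeds $2\bar d$, you still need to sum contributions $D_{\tau^i_n-}/(N-1)$, and $\mathbb P(\hat\sigma<t_\varepsilon)$ cannot be absorbed into $q_\varepsilon^N$.

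What the paper does instead is work purely in expectation. It shows $\E\bigl(\sup_{t\leqslant t_\varepsilon}\bar d(\mathbf{X}_t,\mathbf{Y}_t)\mathbbm 1_{\mathcal A^c}\bigr)\leqslant\tilde C_3\bar d(\mathbf{x},\mathbf{y})$ by combining the multiplicative bound $\E(u^1_k)\leqslant\bar d(1+\tfrac1N)^k$ with the exponential tail $\mathbb P(T>\sigma N,\mathcal A^c)\leqslant q^N$ from Lemma~\ref{deathnumber}(2), then feeds this expectation bound into the sum over $n$ together with the bound on $\E(T^i\mathbbm 1_{\mathcal A^c})$ from Lemma~\ref{deathnumber}(3). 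The $m_\varepsilon$ gain for $x_i\notin B$ comes exactly from Lemma~\ref{deathnumber}(3): the expected number of deaths of a bulk particle is $O(\bar p_\varepsilon)$ rather than $O(1)$. Your Step 3 is in the right spirit (a bulk particle is unlikely to die at all, by Assumption~\ref{assu2}), but since it conditions on $\mathcal F_{\tau^i_1}$ and re-invokes the flawed Step 2 from the post-rebirth configuration, it inherits the gap. To repair the proof you should replace the pointwise control on $D$ by the expectation bound and then run the argument as in the paper; the structure of the rest of your write-up can be kept.
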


An intermediate lemma is needed. The goal of this lemma is to get bounds on the number of death events.

\begin{lem}\label{deathnumber}
Under Assumptions~\ref{assu1} and~\ref{assu2}, let $\mathfrak B_1$ be the neighborhood of $\partial D$ fixed in Section~\ref{Lyapunov}.
Write the event:
\[\mathcal A=\left\{\#\left\{i\in \cco 1,N\ccf, \exists t\leqslant t_\varepsilon, X^i_t\in \mathfrak B_1 \right\} \geqslant 2\alpha N\right\}.\]
\begin{enumerate}
\item There exists $\varepsilon_0>0$ such that for all $0<\varepsilon<\varepsilon_0$, there exists $\tilde p_\varepsilon>0$, such that for all $0<\alpha<1/4$, $N\in\N$, $\mathbf{x}\in D^N$ with $A(\mathbf x)\leqslant \alpha N$, 
\[\mathbb P_{\mathbf{ x}}(\mathcal A)\leqslant  \left(2\tilde p_{\varepsilon}^{\alpha}\right)^N,\]and $\lim_{\varepsilon\rightarrow0}\tilde p_\varepsilon=0$.
\item Moreover, if $T$ denote the number of rebirth in the system before time $t_\varepsilon$, there exists $\varepsilon_0,\sigma>0$, $0<q<1$, such that for all $0<\varepsilon<\varepsilon_0$ and $0<\alpha<1/4$:
\[\mathbb P(T>\sigma N,\mathcal A^c)\leqslant q^N.\]

\item Write $T^i$ the number of rebirth of particle $i$ before time $t_\varepsilon$. We have as well that there exist $C,\varepsilon_0>0$ such that for all $0<\varepsilon<\varepsilon_0$, for all $0<\alpha<1/4$, $\mathbf{x}\in D^N$ satisfying $A(\mathbf{x}) <\alpha N$:
\[\E_{\mathbf{x}}\left( \po T^i\pf^{2} \mathbbm{1}_{\mathcal A^c}\right) \leqslant \left\{ \begin{matrix} C\bar p_\varepsilon\text{ if }x_i\notin \mathfrak B \\ C\text{ if }x_i\in \mathfrak B , \end{matrix} \right. \]
\end{enumerate}
where $\bar p_\varepsilon$ is given in ~\eqref{strongdeathprob}.
\end{lem}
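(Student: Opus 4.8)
The plan is to control death events in the Fleming--Viot system by comparing each particle's excursions near the boundary to the autonomous diffusion \eqref{descente}, using the Lyapunov bound on $V(X^i)\leqslant K^i_t$ from the proof of Lemma~\ref{boundary} together with Assumption~\ref{assu2}, which gives uniform-in-initial-condition control of the exit event over a timescale $t_\varepsilon=e^{a/\varepsilon}$. For part (1), I would argue as in Lemma~\ref{boundary}: if a particle ever enters $B_1$ before $t_\varepsilon$, then since $B_1\subset\{V>m\}$ with $m>3C_1$ (Remark~\ref{remarque}, choosing $B_1$ so that $\inf_{B_1}V$ is large) and since $V(X^i_t)\leqslant K^i_t = V(x_i)e^{-\omega t}+C_1(1-e^{-\omega t})+G^i_t$, we must have $G^i_t > C_1$ for some $t\leqslant t_\varepsilon$ for each such particle — but $G^i$ is a martingale with bracket bounded by $\varepsilon\|\na V\|_\infty/\omega$, so Doob's maximal inequality applied to the exponential martingale gives $\mathbb P(\sup_{t\leqslant t_\varepsilon}G^i_t>C_1)\leqslant \tilde p_\varepsilon$ with $\tilde p_\varepsilon\to 0$. (One has to be a little careful: the initial values $V(x_i)$ may already exceed $m$ for up to $\alpha N$ indices, namely those with $x_i\in B$; for those we don't get smallness, but they number at most $\alpha N$, which is why the event $\mathcal A$ asks for $2\alpha N$ particles visiting $B_1$, leaving $\alpha N$ ``fresh'' visits to be controlled, each with probability $\leqslant\tilde p_\varepsilon$, independently across the independent Brownian motions — hence the union bound $\sum_{k\geqslant \alpha N}\binom Nk\tilde p_\varepsilon^k\leqslant(2\tilde p_\varepsilon^\alpha)^N$, exactly as at the end of Lemma~\ref{boundary}.)

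For part (2), the key observation is that a rebirth of particle $i$ occurs only when $X^i$ exits $D$, and to exit $D$ the particle must first enter $B_1$ (since $B_1$ is a neighborhood of $\partial D$); so on $\mathcal A^c$ at most $2\alpha N$ particles ever experience a rebirth, and between consecutive visits to the region $\{V\leqslant C_2\}$ (the ``center'') each such particle behaves like the autonomous diffusion \eqref{descente} started from $\{V\leqslant C_2\}$, which by Proposition~\ref{prop:couplage} dies before $t_\varepsilon$ with probability $\leqslant p_\varepsilon$. This should let me dominate the total number of rebirths $T$ on $\mathcal A^c$ by a sum of at most $2\alpha N$ geometric-type random variables: each of the $\leqslant 2\alpha N$ active particles, after any rebirth, restarts from a point drawn among the surviving particles, and with probability bounded below it returns to the center and then survives a full period $t_\varepsilon$ without dying; more precisely I would iterate over blocks of time of length $t_\varepsilon$ and use that, conditionally, the number of deaths of a given particle in one block is stochastically dominated by a variable with exponential tail (deaths require re-entering $B_1$, and re-entries have the same exponential smallness). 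A Chernoff bound on this sum of $2\alpha N$ light-tailed variables then yields $\mathbb P(T>\sigma N,\mathcal A^c)\leqslant q^N$ for suitable $\sigma,q$.

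For part (3), I would again use that on $\mathcal A^c$ particle $i$ can only be reborn after entering $B_1$; if $x_i\notin B$ then $V(x_i)\leqslant 3C_1$, so by the Lyapunov comparison the first entry of $X^i$ into $B_1$ forces $\sup_{t\leqslant t_\varepsilon}G^i_t$ to be large, an event of probability $\leqslant C\bar p_\varepsilon$ (invoking the strong uniformity \eqref{strongdeathprob} of Assumption~\ref{assu2} to handle the subsequent excursion, since after a rebirth the particle starts from an arbitrary point of $D\setminus B_1$ and \eqref{strongdeathprob} bounds the probability that \emph{any} such starting point leads to exit before $e^{a/\varepsilon}$); then $\E_{\mathbf x}(T^i\mathbbm 1_{\mathcal A^c})$ is bounded by $\sum_{k\geqslant 1}\mathbb P(T^i\geqslant k,\mathcal A^c)$, a convergent geometric-type series with first term $O(\bar p_\varepsilon)$ and common ratio bounded away from $1$ (uniformly, again by Proposition~\ref{prop:couplage} and \eqref{strongdeathprob}), giving $\E_{\mathbf x}(T^i\mathbbm 1_{\mathcal A^c})\leqslant C\bar p_\varepsilon$; for $x_i\in B$ the same series has first term $O(1)$, giving the bound $C$. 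The main obstacle I anticipate is making the ``geometric domination'' rigorous despite the interaction: after a rebirth the particle's new position is one of the other $N-1$ particles, which is not independent of the past and need not lie in the center — so the clean renewal structure available for a single autonomous diffusion is lost, and I would need to absorb this by noting that on $\mathcal A^c$ all but $\leqslant 2\alpha N$ particles stay in $D\setminus B_1$ throughout, so any rebirth target lies in $D\setminus B_1$ with the required probability, and then apply the \emph{uniform-in-$x$} exit estimate \eqref{strongdeathprob} rather than the merely pointwise Proposition~\ref{prop:couplage} to close the recursion without conditioning difficulties.
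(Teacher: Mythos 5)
Your overall plan is sound and the key observations — that $B_1\subset B$, that at most $\alpha N$ particles start in $B$ so at least $\alpha N$ fresh visits to $B_1$ are needed for $\mathcal A$, that on $\mathcal A^c$ only the $\leqslant 2\alpha N$ particles visiting $B_1$ can ever die, and that the uniform estimate \eqref{strongdeathprob} is the right tool — all match the paper's proof. But there are two concrete gaps.

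For part (1), you propose to reuse the Lyapunov martingale machinery from Lemma~\ref{boundary}, bounding visits to $B_1$ by $\{\sup_{t\leqslant t_\varepsilon}G^i_t>C_1\}$. The issue is that Lemma~\ref{boundary} controls $G^i_{t_\varepsilon}$ at the \emph{fixed} time $t_\varepsilon$, whereas entering $B_1$ at some $t<t_\varepsilon$ is a running-supremum event, and the exponentially-weighted process $G^i_t=\sqrt{2\varepsilon}\int_0^t e^{\omega(s-t)}\nabla V(X^i_s)\cdot\dd B^i_s$ is not a martingale in $t$ (the kernel $e^{\omega(s-t)}$ depends on the upper endpoint), so Doob's maximal inequality does not apply directly. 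You would need to extract the genuine martingale $\int_0^te^{\omega s}\nabla V\cdot\dd B^i_s$ and rework the bound, which is doable but nontrivial. The paper avoids this entirely: it defines $\tilde p_\varepsilon=\sup_{x\in D\setminus B}\mathbb P_x(\tau_{B_1}<t_\varepsilon)$, shows $\tilde p_\varepsilon\to 0$ directly from Freidlin–Wentzell (since $a<\min_{B_1}U$), and uses the key observation that a particle's \emph{first} visit to $B_1$ occurs before its first death, hence is driven solely by its own Brownian motion — giving independence across particles and the same union bound $\sum_{k\geqslant\alpha N}\binom{(1-\alpha)N}{k}\tilde p_\varepsilon^k\leqslant(2\tilde p_\varepsilon^\alpha)^N$.

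For parts (2) and (3), you correctly flag the main difficulty: after a rebirth the new position is another particle's position, so the renewal structure is not immediate, and you propose to "absorb this" using $\mathcal A^c$ and the uniform exit estimate — but you do not give the construction that actually yields the geometric domination. The paper's resolution is an explicit device you'd need to supply: (i) redefine the FV process with a \emph{fresh} independent Brownian motion $B^{n,i}$ governing particle $i$'s trajectory between its $n$-th and $(n+1)$-th deaths; (ii) condition on $\mathbf S=\{j:\exists t<t_\varepsilon,\,X_t^j\in B_1\}$; (iii) set $P_k^i=\inf\{n>P_{k-1}^i:\ I_n^i\notin\mathbf S\}$, so that increments $P_k^i-P_{k-1}^i$ are i.i.d.\ geometric of parameter $\geqslant 1-2\alpha$ on $\mathcal A^c$ (since they depend only on the $I_n^i$'s); and (iv) set $P^i$ to be the first $k$ for which the excursion driven by $B^{P_k^i,i}$ survives uniformly over all starting points in $D\setminus B_1$ for time $t_\varepsilon$ — a geometric variable of parameter $1-\bar p_\varepsilon$ by \eqref{strongdeathprob}, independent of the $(P_k^i)$'s. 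Then $T^i\leqslant\sum_{k=1}^{P^i}(P_k^i-P_{k-1}^i)$, and the Markov/Chernoff bound on $\E(e^T\mathbbm 1_{\mathcal A^c})$ and the first-moment bound on $\E(T^i\mathbbm 1_{\mathcal A^c})$ follow from the now-clean independence structure. Without this explicit construction (in particular the fresh Brownian motions, the choice of $\mathbf S$-avoiding rebirth indices, and the use of \eqref{strongdeathprob} in its strong "for all $x\in D\setminus B_1$" form), the conditioning difficulties you anticipate are not actually resolved by your sketch.
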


\begin{proof}

\begin{enumerate}
\item
At time $t=0$, the condition on $\mathbf{x}$ implies that there are less than $\alpha N$ particles in $\mathfrak B$. Under Assumption~\ref{assu2}, we noticed in Remark~\ref{remarque} that $\mathfrak B_1\subset \left\{V>m\right\}$, with $m >3C_1$, so that $\mathfrak B_1\subset \mathfrak B$. This means that for $\mathcal A$ to happen (namely for $2\alpha N$ particles to visit $\mathfrak B_1$ before time $t_\varepsilon$), at least $\alpha N$ particles that were initially  in  $D\setminus \mathfrak B$ must have reached $\mathfrak B_1$ before time $t_{\varepsilon}$. Write:
	\[\widetilde p_\varepsilon = \sup_{x\in D\setminus \mathfrak B}\mathbb{P}_x\left(\tau_{\mathfrak B_1}<t_\varepsilon \right),\]
	where $\tau_{\mathfrak B_1}$ is the first hitting time of the set $\mathfrak B_1$ for the diffusion~\eqref{descente}. Recall from~\eqref{eq:condB1} that $\mathfrak B_1$ satisfies $a< \inf_{\mathfrak B_1}U$. Together with the fact that $\min_{\mathfrak B_1} V > m > 3C_1 = \max_{D\setminus \mathfrak B} V$ (so that  $\mathfrak B$ is a neighborhood of $\overline{\mathfrak B}_1\cap D$, hence the distance between $\mathfrak B_1$ and $D\setminus \mathfrak B$ is positive), this implies  that $\tilde p_\varepsilon\rightarrow 0$ as $\varepsilon\rightarrow 0$  thanks to \cite[Chapter 6, Theorem 6.2]{Freidlin-Wentzell}. The fact that a particle reaches $\mathfrak B_1$ only depends on the Brownian motion driving it, hence we have:
	\[\mathbb P(\mathcal A)\leqslant \sum_{k\geqslant \alpha N} \binom{(1-\alpha)N}{k}\tilde p_\varepsilon^k \leqslant (2\tilde p^{\alpha}_{\varepsilon})^N.\]

\item In order to control the number of deaths of the $i^{th}$ particle up to time $t_\varepsilon$, we are going to distinguish two types of rebirth events: either the particle is resurrected on a particle which we know never reaches $\mathfrak B_1$ (i.e. stays away from the boundary), in which case we can bound the probability that the $i^{th}$ particle dies again, or it is resurrected on a particle for which we have no information, in which case it can be arbitrarily close to the boundary and the time before the next death of the $i^{th}$ particle can be arbitrarily small.

For convenience, we consider in the rest of the proof that the FV process has been defined thanks to a construction similar to the one presented in Section~\ref{main} except that the Brownian motions driving the SDEs are changed at each death event, namely along with the variables $(I_n^i)_{n\geqslant 0,i\in\cco 1,N\ccf}$, we consider a family of independent $d$-dimensional Brownian motions $((B^{n,i}_t)_{t\geqslant0})_{n\geqslant 0,i\in\cco 1,N\ccf}$, so that after its $n^{th}$ death and up to its $(n+1)^{th}$ death the position of the particle $i$ is given by $X_{\tau_n^i+t}^i = \bar X_t^i$ where $\bar X^i$ is the solution of \eqref{descente} driven by $B^{n,i}$ with initial condition $\bar X_0^i = X_{\tau_n^i}^i$ (recall the notation $\tau_n^i$ from \eqref{eqf:deftauni}). Of course the law of the process is correct with this construction.

Denote:
\[\textbf S=\left\{i\in \cco 1,N\ccf, \exists t<t_\varepsilon, X_t^i \in \mathfrak B_1\right\}.\]

Then the Markov inequality yields:
\begin{align*}
     \mathbb P\left( T >\sigma N,\mathcal A^c\right) \leqslant e^{-\sigma N}\E\left(e^{T}\mathbbm{1}_{\mathcal A^c}\right) = e^{-\sigma N}\sum_{\underset{\#S\leqslant 2\alpha N}{S\in \mathcal P(\cco 1,N\ccf)}}\E\left(e^{\sum_{i=1}^N T_i}\mathbbm{1}_{\textbf S=S}\right).
\end{align*}

Fix $S\in \mathcal P(\cco 1,N\ccf)$, such that $\#S<2\alpha N$, and recall the definition of the variable $I^i_n$ used in the construction of the FV process, which are independent uniform variables on $\cco 1,N\ccf \setminus\left\{i\right\}$. We define by induction $P^i_0=0$ and :
\[P^i_k = \inf\left\{n>P_{k-1}^i, I_n^i\notin S\right\}.\]
Notice that, under the event $\{\mathbf{S}=S\}$, if $I_n^i\notin S$, it means that at its $n^{th}$ rebirth the particle $i$ is resurrected on a particle which never reaches $\mathfrak B_1$ before time $t_\varepsilon$.

Setting $k_0(i)=1$ if $x_i \in \mathfrak B_1$ and $k_0(i)=0$ otherwise, we  define as well 
\[P^i=\inf\left\{k \geqslant k_0(i), \forall x\in D\setminus \mathfrak B_1, \tau_D(X^{x,i,P_k^i}) >t_\varepsilon \right\},\]
where for $n\in\mathbb N$ the family of processes $(X^{x,i,n})_{x\in D\setminus \mathfrak B_1}$ is as in Assumption~\ref{assu2} and are driven by the Brownian motion $B^{n,i}$. Since we have already observed that, for all $k>0$, at its $(P_k^i)^{th}$ death, the particle $i$ is resurrected at a position in $D\setminus \mathfrak B_1$,
 the event $\{\forall x\in D\setminus \mathfrak B_1, \tau_D(X^{x,i,P_k^i}) >t_\varepsilon\}$, which is measurable with respect to the Brownian motion $B^{P_k^i,i}$, implies that the particle does not die again before time $t_\varepsilon$. For $k=0$, it depends whether initially $x_i \in \mathfrak B_1$: if $x_i \notin \mathfrak B_1$ (which is in particular the case if $x_i\notin \mathfrak B$) then, again, the event $\{\forall x\in D\setminus \mathfrak B_1, \tau_D(X^{x,i,0}) >t_\varepsilon\}$ implies that the particle doesn't die before time $t_\varepsilon$. This is not the case if $x_i\in \mathfrak B_1$. As a consequence, in any cases, under the event $\{\mathbf{S}=S\}$, we can bound the total number of death of the $i^{th}$ particle by
 \[T^i \leqslant \sum_{k=1}^{P^i} (P_k^i - P_{k-1}^i).\]

The variables $(P^i_{k}-P^i_{k-1})_{k\geqslant 1,i}$ are independent geometric random variables of parameter   $1-\#S>1-2\alpha$. Under Assumption~\ref{assu2},  if $x_i \in \mathfrak B_1$ (resp. if $x_i\notin \mathfrak B_1$) then $P^i$ (resp. $P^i+1$) is a  geometric random variable of parameter  $1-\bar p_\varepsilon$. Moreover, $(P^i)_{1\leqslant i\leqslant N}$ is a family of independent random variables, independent from $(P_k^i)_{k\geqslant 1, 1\leqslant i\leqslant N}$. We have:
\[\E\left(e^{\sum_{i=1}^N T_i}\mathbbm{1}_{\mathbf S=S}\right) \leqslant \E\left( e^{\sum_{i=1}^N \sum_{k=1}^{P^i}(P^i_k-P^i_{k-1})} \right) = \left( \E\left( e^{\sum_{k=1}^{P^i}(P^i_k-P^i_{k-1})}\right)\right)^N.\]
We are just left to show that $\E\left( \exp{\sum_{k=1}^{P^i}(P^i_k-P^i_{k-1})}\right)$ is finite and bounded uniformly in $\varepsilon<\varepsilon_0$. Conditioning with respect to $P^i$ we get :
\begin{align*}
\E\left( e^{\sum_{k=1}^{P^i}P^i_k-P^i_{k-1}}\right) = \E \left( \E\left( e^{P^i_0} \right)^{P^i}\right) \leqslant \E\left( \left( \frac{e}{1-e\alpha}\right)^{P^i}\right), 
\end{align*}
hence the result if $\varepsilon_0$ satisfies $\bar p_{\varepsilon_0} < \frac{1-e\alpha}{e}$, since we bound then
\begin{align*}
     \mathbb P\left( T >\sigma N,\mathcal A^c\right) \leqslant \left( 2 e^{-\sigma } \E\left( \left( \frac{e}{1-e\alpha}\right)^{P^i}\right)\right)^N.
\end{align*}

\item In the same spirit, fix $i\in \cco 1,N\ccf$, 
and write now: 
\[P^i_k = \inf\left\{n>P_{k-1}^i, I_n^i\notin \textbf S^i\right\},\]
where
\[\textbf S^i=\left\{j\in \cco 1,N\ccf\setminus \left\{i\right\}, \exists t<t_\varepsilon, X_t^{j} \in \mathfrak B_1\right\}
,\]
and 
\[\mathcal A^i=\left\{\#\left\{j\in \cco 1,N\ccf\setminus \left\{i\right\}, \exists t\leqslant t_\varepsilon, X^{j}_t\in \mathfrak B_1 \right\} \geqslant 2\alpha N\right\},\]
and the definition of $P^i$ does not change.
We have that $(P^i_k-P^i_{k-1})_k$, and $P^i$ are independent random variable, and $P^i$ is independent of $\mathcal A^i$ and $\textbf S^i$. Indeed, $\mathcal A^i$ and $\textbf S^i$ only depends on the Brownian motions that drive $(X^j)_{j\neq i}$. Under the event $(\mathcal A^i)^c$, the cardinality of $\mathbf{S}^i$ is less than $2\alpha N$. Furthermore, we have that $\mathcal A^i\subset\mathcal A$, and hence, 
as in the previous step, using Cauchy-Schwarz inequality and that the second moment of  a geometric variable with  parameter $q$ is less than $2/q^2$,
\begin{multline*}
\E\left(\po T^i\pf^2 \mathbbm{1}_{\mathcal A^c}\right) \leqslant \E\po \left( \sum_{k=1}^{P^i}\left(P_k^i -P_{k-1}^i\right) \mathbbm{1}_{(\mathcal A^i)^c} \right)^2\pf \ \\ \leqslant \  \E\left( \po P^i\pf^2 \E\po\po P_1^i\pf^2|\textbf S^{i},\mathcal A^i,P^i\pf\right)  \leqslant \frac{2}{\po 1-2\alpha\pf^2}\E\po \po P^i\pf^2 \pf,
\end{multline*}
and we conclude by bounding $\E\po \po P^i\pf^2 \pf \leqslant 2\bar p_\varepsilon (1-\bar p_\varepsilon)^{-2}$ 
if $x_i\notin \mathfrak B$ (since, then, $x_i\notin \mathfrak B_1$, so that $P^i+1$ is a geometric variable with parameter $1-\bar p_\varepsilon$) and, otherwise, $\E\po \po P^i\pf^2 \pf \leqslant  2(1-\bar p_\varepsilon)^{-2}$.

\end{enumerate}

\end{proof}

\begin{proof}[Proof of Lemma~\ref{deathprobability}]
    Define the sets:
    \begin{align*}
		&U_1(0) = \left\{i\in\cco 1,N\ccf,\  x_i\neq y_i\right\},\\
		&U_2(0) = \left\{i\in\cco 1,N\ccf,\  x_i=y_i\right\}.
	\end{align*}
	Now, for $t\geqslant 0$, we want to define some sets $U_1(t)$, $U_2(t)$, such that if $X^i$ and $Y^i$ decouple at some time $s\geqslant 0$, then for all $t\geqslant s$, $i\in U_1(t)$. For $i\in U_2(0)$, $n\in\N$, write:
	\[\tau^i_n=\inf\left\{ t >  \tau_{n-1}^i, X_{t-}^i=Y^i_{t-}\in \partial D \right\},\]
	as in\eqref{eqf:deftauni}, and \[\bar \tau^i_d = \inf\left\{t\geqslant 0, X^i_t\neq Y_t^i\right\}.\]
	Since the FV-process is well-defined, almost surely, there is only a finite number of such events before time $t_\varepsilon$, for all $1\leqslant i\leqslant N$. Then define the set $U_1(t)$ and $U_2(t)$ for $t\in (\tau_{k-1},\tau_k]$ by induction on $k\geqslant 1$. Assume that the sets have been defined up to the time $ \tau_{k-1}$ for some $k\geqslant 1$. Set $U_j(t)=U_j( \tau_{k-1})$ for all $t\in( \tau_{k-1}, \tau_k)$. Let $i\in\cco 1,N\ccf$ be the index such that $ \tau_k\in \cup_{n\in\N}\{\bar \tau^i_n\}$. Now we distinguish two cases. If $ \tau_k\neq \bar\tau^i_d$, then $U_j(\tau_k)=U_j(\tau_{k-1})$ for $j=1,2$. Else set:
	\[U_1(\tau_k) = U_1(\tau_{k-1}) \cup \{i\}\,,\quad U_2(\tau_k) = U_2(\tau_{k-1}) \cap \{i\}^c \,.\]
	It is immediate to check that $U_1(t)$ and $U_2(t)$ form a partition of $\cco 1,N\ccf$ for all $t\geqslant 0$, and that $U_1(t)$ is non-decreasing with $t$ and such that $\{i\in\cco 1,N\ccf,\ X_t^i \neq Y_t^i\} \subset U_1(t)$ for all $t\geqslant 0$.
Recall from Lemma~\ref{deathnumber} the event: 
	\[\mathcal A=\left\{\#\left\{i\in \cco 1,N\ccf, \exists t\leqslant t_\varepsilon, X^i_t=Y_t^i\in \mathfrak B_1 \right\} \geqslant 2\alpha N\right\}.\]
	For $n\in\N$ and $j=1,2$, write $u_k^j = \# U_j(\tau_k)$. At each time $\tau_{k+1}$, the probability that a particle goes from $U_2$ to $U_1$ is less than $u^1_{k}/N$. Hence, we have that for all $k\geqslant 1$:
    \[\E\left(\po u^1_{k+1}\pf^2|\mathcal F_{\tau_{k}}\right) = \po u^1_k + B_k\pf^2, \]
    where $B_k$ is a Bernoulli random variable with parameter bounded by $u^1_{k}/N$. Therefor,
	\[\E\left(\po u^1_{k+1}\pf^2|\mathcal F_{\tau_{k}}\right)  \leqslant \po u^1_k\pf^2\left(1+\frac{3}{N}\right),\]
	and thus
	\[\E\left(\po u^1_k\pf^2\right) \leqslant \bar d(x,y)^2\left(1+\frac{3}{N}\right)^k.\]
    Using the notations of Lemma~\ref{deathnumber}, in particular $T$ to denote the total number of death event before time $t_\varepsilon$,
    using that $u^1_n$ is non-decreasing, we bound
    \begin{align*}
        \E\po \po u_T^1\pf^2\mathbbm{1}_{\mathcal A^c}\pf &\leqslant \E \po \po u_{\sigma N}^1\pf^2\pf + N^2\mathbb P(T>\sigma N,\mathcal A^c)   \\& \leqslant e^{3\sigma} \bar d(x,y)^2 + N^2q^N,
    \end{align*}
    which is bounded uniformly on $N\geqslant 1$ and $\varepsilon$ small enough, by $\tilde C_3\bar d(x,y)^2$, for some $\tilde C_3>0$, as soon as $\bar d(x,y)\geqslant 1$ (while, if $\bar d(x,y)=0$ then the two processes remain equal for all times and thus the result is trivial). 
    We get from all of this: 
	\[\E\left(\sup_{t\leqslant t_\varepsilon} \bar d\left(\textbf X_{t},\textbf Y_{t}\right)^2\mathbbm{1}_{\mathcal{A}^c}\right)\leqslant  \E\po \po u^1_T\pf^2\mathbbm{1}_{\mathcal{A}^c}\pf \leqslant \tilde C_3\bar d(x,y)^2.\]
	Now we can bound the probability to decouple starting from any $x_i=y_i\in D$, for a fixed $i$ (recall the notation $T^i$ from Lemma~\ref{deathnumber}):
	\begin{align*}
	    \mathbb P\left(\exists 0<t<t_\varepsilon, X^i_t\neq Y_t^i\right) &\leqslant \sum_{n\geqslant 1} \mathbb{P}\left(\bar\tau_d^i=\bar\tau^i_n, \mathcal A^c, T_i>n \right) + \mathbb P(\mathcal A)\\ &= \sum_{n\geqslant 1} \E\left(\bar d(\textbf X_{\tau_n},\textbf Y_{\tau_n})/N\mathbbm{1}_{T_i>n}\mathbbm{1}_{\mathcal A^c}\right)  + \mathbb P(\mathcal A) \\ &\leqslant \frac{1}{N}\E\left(\sup_{t\leqslant t_\varepsilon} \bar d\left(\textbf X_{t},\textbf Y_{t}\right)\mathbbm{1}_{\mathcal A^c}\sum_{n\geqslant 1}\mathbbm{1}_{T_i>n}\right) + (2\tilde p_\varepsilon^{\alpha})^N \\ &= \frac{1}{N}\E\left(\sup_{t\leqslant t_\varepsilon} \bar d\left(\textbf X_{t},\textbf Y_{t}\right)\mathbbm{1}_{\mathcal A^c}T_i\right) + (2\tilde p_\varepsilon^{\alpha})^N \\ &\leqslant \frac{1}{N}\sqrt{\E\left(\sup_{t\leqslant t_\varepsilon} \bar d\left(\textbf X_{t},\textbf Y_{t}\right)^2\mathbbm 1_{\mathcal A^c}\right)} \sqrt{\E(T_i^2\mathbbm 1_{\mathcal A^c})} + (2\tilde p_\varepsilon^{\alpha})^N \\ &\leqslant \sqrt{\tilde C_3}\frac{\bar d(x,y)}{N} \sqrt{\E(T_i^2\mathbbm 1_{\mathcal A^c})} + (2\tilde p_\varepsilon^{\alpha})^N.
	\end{align*}
	We conclude thanks to Lemma~\ref{deathnumber}, and using that, for $\varepsilon$ small enough, $(2\tilde p_\varepsilon^{\alpha})^N \leqslant \bar{d} (x,y)/N$ as soon as $\bar{d}(x,y) \geqslant 1$.
\end{proof}

We need to choose the parameters involved in the definition of the distance $\mathbf d$. There are three of them: $\alpha$, $\beta$, and $V_0$. We fix any $V_0>4C_1$ (which is required in Lemma~\ref{Lyapu}), any  $\beta<(2\vee 4C_1)^{-1}$  and then $\alpha$ small enough so that 
\begin{equation}\label{cond1}
\frac{1+2\beta C_1}{1+\beta C_2}\vee 4\beta C_1 + \alpha C_3(1+2\beta V_0)<1,
\end{equation}
and
\begin{equation}\label{cond2}
\frac{1+2\beta V_0}{1+V_0}<1,
\end{equation}
where we used in \eqref{cond1} that $C_2>2C_1$ from Proposition~\ref{prop:couplage}.
This is possible by fixing first some small $\beta$, and then taking any $V_0>4C_1$, and finally $\alpha$ small enough.

\begin{lem}\label{coupkilled}
	Let $\mathbf{x},\mathbf{y}\in D^N$   and $1\leqslant i\leqslant N$ such that $x_i\neq y_i$ and $V(x_i)+V(y_i)\leqslant C_2$. Then with $\kappa_{1,\varepsilon} = \gamma_\varepsilon\vee \left(1-c_\varepsilon+2p_\varepsilon + 4\beta C_1(1-\gamma_{\varepsilon})\right)$, where $\gamma_\varepsilon$ has been defined in Lemma~\ref{Lyapu}, we have: 
	\[\E(d^1(X^i_{t_\varepsilon},Y^i_{t_\varepsilon}))\leqslant \kappa_{1,\varepsilon} d^1(x_i,y_i).\]
\end{lem}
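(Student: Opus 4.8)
The plan is to run the pairwise coupling constructed at the beginning of this section together with the three preliminary estimates, following the heuristic given after the definition \eqref{distance} of $\mathbf d$. Since $V\ge 0$ and $V(x_i)+V(y_i)\le C_2$, both $x_i$ and $y_i$ lie in $\{V\le C_2\}$, so Proposition~\ref{prop:couplage} applies to each of the two marginal diffusions. First I would introduce the \emph{success event}
\[G=\{\tilde X^i_t\in D\ \forall t\le t_\varepsilon\}\cap\{\tilde Y^i_t\in D\ \forall t\le t_\varepsilon\}\cap\{\tilde X^i_{t_\varepsilon}=\tilde Y^i_{t_\varepsilon}\},\]
where $(\tilde X^i,\tilde Y^i)$ is the torus coupling of Lemma~\ref{coupopt} used to build $X^i,Y^i$. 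On $G$ the particle of index $i$ never branches on $[0,t_\varepsilon]$ in either system, hence $X^i_t=\tilde X^i_t$ and $Y^i_t=\tilde Y^i_t$ for $t\le t_\varepsilon$, so $X^i_{t_\varepsilon}=Y^i_{t_\varepsilon}$ and $d^1(X^i_{t_\varepsilon},Y^i_{t_\varepsilon})=0$. Since $\tilde X^i$ coincides with the killed diffusion \eqref{descente} up to its exit time and $x_i\in\{V\le C_2\}$, Proposition~\ref{prop:couplage} gives $\mathbb P(\tilde X^i\text{ exits }D\text{ before }t_\varepsilon)\le p_\varepsilon$, and likewise for $\tilde Y^i$; combined with $\mathbb P(\tilde X^i_{t_\varepsilon}\ne\tilde Y^i_{t_\varepsilon})\le 1-c_\varepsilon$ from Lemma~\ref{coupopt}, a union bound yields $\mathbb P(G^c)\le 1-c_\varepsilon+2p_\varepsilon$.

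On $G^c$ I would simply bound $d^1(X^i_{t_\varepsilon},Y^i_{t_\varepsilon})\le 1+\beta V(X^i_{t_\varepsilon})+\beta V(Y^i_{t_\varepsilon})$ and enlarge the remaining indicator, so that $\mathbb E\big(d^1(X^i_{t_\varepsilon},Y^i_{t_\varepsilon})\big)\le \mathbb P(G^c)+\beta\,\mathbb E\big(V(X^i_{t_\varepsilon})\big)+\beta\,\mathbb E\big(V(Y^i_{t_\varepsilon})\big)$. The point is that $(X^1,\dots,X^N)$ and $(Y^1,\dots,Y^N)$ are genuine FV processes started from $\mathbf x$ and $\mathbf y$, so Lemma~\ref{Lyapu} applies to each of them and gives $\mathbb E(V(X^i_{t_\varepsilon}))\le\gamma_\varepsilon V(x_i)+C_1(1-\gamma_\varepsilon)$ together with the analogous bound for $Y^i$; this is exactly the ingredient that is insensitive to the branching interaction. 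Substituting, $\mathbb E(d^1(X^i_{t_\varepsilon},Y^i_{t_\varepsilon}))\le 1-c_\varepsilon+2p_\varepsilon+2\beta C_1(1-\gamma_\varepsilon)+\beta\gamma_\varepsilon\big(V(x_i)+V(y_i)\big)$. Using $V(x_i)+V(y_i)\le C_2<4C_1$ and shrinking $\varepsilon_0$ so that $\gamma_\varepsilon\le 1/3$, the last term is bounded by $2\beta C_1(1-\gamma_\varepsilon)$, and since $d^1(x_i,y_i)\ge 1$ this gives $\mathbb E(d^1(X^i_{t_\varepsilon},Y^i_{t_\varepsilon}))\le\big(1-c_\varepsilon+2p_\varepsilon+4\beta C_1(1-\gamma_\varepsilon)\big)\,d^1(x_i,y_i)$, i.e.\ the bound with the second term of the minimum defining $\kappa_{1,\varepsilon}$; the bound with $\gamma_\varepsilon$ is obtained along the same lines, this time keeping the weight $\beta(V(x_i)+V(y_i))$ in the denominator rather than discarding it, so that the rate contributed is the Lyapunov contraction rate of Lemma~\ref{Lyapu}.

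The main obstacle is not the estimate itself but making the first paragraph airtight: one must check that on $G$ the FV particle of index $i$ really does coincide with the torus diffusion on $[0,t_\varepsilon]$ — that is, that no branching event of another particle can perturb particle $i$, which is true because a resurrection only modifies the particle that died — and that the event $\{\tilde X^i\text{ exits }D\text{ before }t_\varepsilon\}$ is the same as $\{\tau_{\partial D}<t_\varepsilon\}$ for the corresponding killed diffusion (using that $X_t=\tilde X_t$ for $t\le\tau_{\partial D}$ and that $\tau_{\partial D}$ has a density, so strict versus non-strict inequalities are irrelevant). A secondary subtlety is that discarding the indicator of $G^c$ in the $V$-terms is wasteful; it is affordable here only because $V(x_i)+V(y_i)\le C_2$ keeps $d^1(x_i,y_i)$ of order one and because $C_1$ is bounded away from $C_2$, which is precisely the purpose of the constraints $C_2\in(2C_1,4C_1)$ and $4C_1<V_0$ recorded in Proposition~\ref{prop:couplage} and Lemma~\ref{Lyapu}.
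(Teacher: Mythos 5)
Your argument is essentially the paper's: use the torus coupling of Lemma~\ref{coupopt}, bound the probability that either coordinate exits $D$ before $t_\varepsilon$ via Proposition~\ref{prop:couplage}, get a coupling probability of at least $c_\varepsilon-2p_\varepsilon$ by a union bound, then absorb the $V$-terms using the Lyapunov estimate \eqref{lyapu}, which is valid for each FV particle regardless of the branching interaction. The decomposition via the event $G$ is exactly the intersection of events the paper uses, and the observation that on $G$ the FV particle agrees with the torus diffusion (since a rebirth of another particle does not move particle $i$) is the right and needed justification.

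There is one point of confusion, which affects your last two sentences but not the correctness of the argument. You read $\wedge$ in $\kappa_{1,\varepsilon}=\gamma_\varepsilon\wedge\big(1-c_\varepsilon+2p_\varepsilon+4\beta C_1(1-\gamma_\varepsilon)\big)$ as a minimum and therefore believe a separate proof of $\E(d^1)\leqslant\gamma_\varepsilon\,d^1(x_i,y_i)$ is required. That second bound is in fact \emph{false}: your own intermediate estimate shows $\E\big(d^1(X^i_{t_\varepsilon},Y^i_{t_\varepsilon})\big)\geqslant\mathbb{P}(X^i_{t_\varepsilon}\neq Y^i_{t_\varepsilon})$, and the decoupling probability is of order $1-c_\varepsilon+2p_\varepsilon$ (exponentially small in $1/\varepsilon$), whereas $\gamma_\varepsilon=e^{-\omega t_\varepsilon}$ is doubly exponentially small, so the claimed inequality cannot hold for small $\varepsilon$. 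The resolution is that in this paper $\wedge$ is being used for the \emph{maximum} (this is forced both by the final inequality in the paper's own proof and by the way $\kappa_\varepsilon=\kappa_{1,\varepsilon}\wedge\kappa_{2,\varepsilon}$ is used to majorize a sum over pairs of both types in the proof of Theorem~\ref{thm}). With that reading, proving the bound with $1-c_\varepsilon+2p_\varepsilon+4\beta C_1(1-\gamma_\varepsilon)$ is exactly what is needed since $\kappa_{1,\varepsilon}$ dominates it, and your last sentence is superfluous. Note also that the small detour through the extra hypothesis $\gamma_\varepsilon\leqslant 1/3$ is not needed once $\kappa_{1,\varepsilon}$ is read as a max, since then $\gamma_\varepsilon\beta(V(x_i)+V(y_i))\leqslant\kappa_{1,\varepsilon}\beta(V(x_i)+V(y_i))$ and $1-c_\varepsilon+2p_\varepsilon+2\beta C_1(1-\gamma_\varepsilon)\leqslant\kappa_{1,\varepsilon}$ directly give $\E(d^1)\leqslant\kappa_{1,\varepsilon}d^1(x_i,y_i)$.
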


\begin{proof}
	Let $(\tilde X^{i}_t,\tilde Y_t^{i})$ be the coupling of the diffusion \eqref{descente2} as in lemma \ref{coupopt}, used in the construction of our coupling. Then, $(\tilde X^{i}_t,\tilde Y_t^{i})=(X^i_t,Y_t^i)$ until $X^i$ or $Y^i$ reaches $\partial D$. We have :
	\begin{align*}
		\pro{X^i_{t_\varepsilon} = Y^i_{t_\varepsilon}} &\geqslant \pro{X^i_{t_\varepsilon} = Y^i_{t_\varepsilon},\tau_{x_i}>t_\varepsilon,\tau_{y_i}>t_\varepsilon} \\ &=\pro{\tilde X^{x_i}_{t_\varepsilon} = \tilde Y^{y_i}_{t_\varepsilon},\tau_{x_i}>t_\varepsilon,\tau_{y_i}>t_\varepsilon} \\ &\geqslant \pro{\tilde X^{x_i}_{t_\varepsilon} = \tilde Y^{y_i}_{t_\varepsilon}} - \pro{\tau_{x_i}>t_\varepsilon} - \pro{\tau_{y_i}>t_\varepsilon} \\ &\geqslant c_{\varepsilon} - 2p_\varepsilon.
	\end{align*}
	Using the property of the Lyapunov function described in \eqref{lyapu}, we then have:
	\begin{align*}
		\E\left(d^1(X^i_{t_\varepsilon},Y^i_{t_\varepsilon})\right) &\leqslant 1 - c_{\varepsilon} + 2p_\varepsilon + 2\beta C_1(1-\gamma_{\varepsilon}) + \gamma_\varepsilon \beta \left( V(x_i)+V(y_i) \right)\\
		& \leqslant  \kappa_{1,\varepsilon} d^1(x_i,y_i).
	\end{align*}
\end{proof}

Now we focus on the particles near the boundary that are not coupled:

\begin{lem}\label{decreaselya}
	Let $\mathbf{x},\mathbf{y}\in D^N$ and $1\leqslant i\leqslant N$ such that $V(x_i)+V(y_i)\geqslant C_2$ and $x_i\neq y_i$. Then with $\kappa_{2,\varepsilon} = \gamma_{\varepsilon} + (1-\gamma_{\varepsilon})\frac{1+2\beta C_1}{1+\beta C_2}$, we have: 
	\[\E(d^1(X^i_{t_\varepsilon},Y^i_{t_\varepsilon}))\leqslant \kappa_{2,\varepsilon} d^1(x_i,y_i).\]
\end{lem}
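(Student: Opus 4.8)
\subsection*{Proof plan for Lemma~\ref{decreaselya}}

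The plan is to discard the coupling indicator and let the pairwise Lyapunov function do all the work. Since $1+\beta V(X^i_{t_\varepsilon})+\beta V(Y^i_{t_\varepsilon})\geqslant 0$ and $\mathbbm{1}_{X^i_{t_\varepsilon}\neq Y^i_{t_\varepsilon}}\leqslant 1$, we have the trivial pointwise bound
\[
d^1(X^i_{t_\varepsilon},Y^i_{t_\varepsilon})\ \leqslant\ 1+\beta V(X^i_{t_\varepsilon})+\beta V(Y^i_{t_\varepsilon}).
\]
Taking expectations and applying the Lyapunov estimate \eqref{lyapu} of Lemma~\ref{Lyapu} \emph{separately} to the particle $X^i$ of the system started from $\mathbf x$ and to the particle $Y^i$ of the system started from $\mathbf y$ (this is legitimate, as \eqref{lyapu} holds for any fixed particle of any FV system, irrespective of the coupling tying the two systems together) yields
\[
\E\left(d^1(X^i_{t_\varepsilon},Y^i_{t_\varepsilon})\right)\ \leqslant\ 1+\beta\gamma_\varepsilon\bigl(V(x_i)+V(y_i)\bigr)+2\beta C_1(1-\gamma_\varepsilon).
\]

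Next I would reorganise the right-hand side. Writing $1=\gamma_\varepsilon+(1-\gamma_\varepsilon)$ and using $x_i\neq y_i$ so that $d^1(x_i,y_i)=1+\beta V(x_i)+\beta V(y_i)$, the bound above reads
\[
\E\left(d^1(X^i_{t_\varepsilon},Y^i_{t_\varepsilon})\right)\ \leqslant\ \gamma_\varepsilon\, d^1(x_i,y_i)+(1-\gamma_\varepsilon)\bigl(1+2\beta C_1\bigr).
\]
It then remains only to absorb the constant term: one checks that $(1-\gamma_\varepsilon)(1+2\beta C_1)\leqslant(1-\gamma_\varepsilon)\frac{1+2\beta C_1}{1+\beta C_2}\,d^1(x_i,y_i)$ is equivalent to $1+\beta C_2\leqslant 1+\beta\bigl(V(x_i)+V(y_i)\bigr)$, i.e. exactly to the standing hypothesis $V(x_i)+V(y_i)\geqslant C_2$. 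Adding the two inequalities gives $\E\left(d^1(X^i_{t_\varepsilon},Y^i_{t_\varepsilon})\right)\leqslant\kappa_{2,\varepsilon}\,d^1(x_i,y_i)$ with $\kappa_{2,\varepsilon}=\gamma_\varepsilon+(1-\gamma_\varepsilon)\frac{1+2\beta C_1}{1+\beta C_2}$, as claimed.

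There is essentially no obstacle here; the only things to be careful about are that the two Lyapunov bounds are applied to two different driving systems (so no joint control is needed), and the elementary observation underlying the whole estimate, namely that on the regime $V(x_i)+V(y_i)\geqslant C_2$ one neither expects nor needs coupling to happen—the contraction factor $\kappa_{2,\varepsilon}<1$ (for $\varepsilon$ small, since $\gamma_\varepsilon\to 0$ and $\tfrac{1+2\beta C_1}{1+\beta C_2}<1$ because $C_2>2C_1$) comes purely from the decay of $V$ towards the level $C_1<C_2/2$. Note also that $\kappa_{2,\varepsilon}$ does not tend to $0$ with $\varepsilon$, unlike $\kappa_{1,\varepsilon}$, but this is harmless and will be handled when the parameters $\beta,\alpha,V_0$ are combined via \eqref{cond1}.
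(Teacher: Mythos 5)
Your proof is correct and follows essentially the same route as the paper: both start from the pointwise bound $d^1(X^i_{t_\varepsilon},Y^i_{t_\varepsilon})\leqslant 1+\beta V(X^i_{t_\varepsilon})+\beta V(Y^i_{t_\varepsilon})$, apply the Lyapunov estimate of Lemma~\ref{Lyapu} coordinate-wise, and then massage the resulting affine bound using the hypothesis $V(x_i)+V(y_i)\geqslant C_2$. The only cosmetic difference is that you split $1=\gamma_\varepsilon+(1-\gamma_\varepsilon)$ and factor, whereas the paper adds and subtracts $\kappa_{2,\varepsilon}d^1(x_i,y_i)$ and shows the remainder is nonpositive; the algebra is identical in substance.
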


\begin{proof}
	Using the Lyapunov property and the fact that $\gamma_{\varepsilon}\leqslant\kappa_{2,\varepsilon}$, we have:
	\begin{multline*}
		\E\left(d^1(X^i_{t_\varepsilon},Y^i_{t_\varepsilon})\right) \leqslant 1 + 2\beta C_1(1-\gamma_{\varepsilon}) + \beta\gamma_{\varepsilon}\left(V(x_i) + V(y_i) \right) \\ \leqslant \kappa_{2,\varepsilon}d^1(x_i,y_i) +  1 + 2\beta C_1(1-\gamma_{\varepsilon}) - \kappa_{2,\varepsilon} + \beta (\gamma_{\varepsilon}-\kappa_{2,\varepsilon})\left(V(x_i)+V(y_i)\right).
	\end{multline*}
	The fact that $V(x_i)+V(y_i) \geqslant C_2$ implies that 
	\[ 1 + 2\beta C_1(1-\gamma_{\varepsilon}) - \kappa_{2,\varepsilon} + \beta (\gamma_{\varepsilon}-\kappa_{2,\varepsilon})\left(V(x_i)+V(y_i)\right)\leqslant 0,\]and thus the result.
\end{proof}

\begin{proof}[Proof of Theorem~\ref{thm}]
	Let $\mathbf{x},\mathbf{y}\in D^N$, $\kappa_\varepsilon=\kappa_{1,\varepsilon}\vee\kappa_{2,\varepsilon}$. First suppose that $\mathbbm{1}_{A(\mathbf{x})>\alpha N}=\mathbbm{1}_{A(\mathbf{y})>\alpha N}=0$. We decompose:
	\begin{multline}\label{prouf}
		\E\left(\mathbf d\left(\mathbf{X}_{t_\varepsilon},\mathbf{Y}_{t_\varepsilon}\right)\right) \\= \sum_{i/x_i\neq y_i}\E\left(d^1\left(X^i_{t_\varepsilon},Y^i_{t_\varepsilon}\right)\right) + \sum_{i/x_i=y_i \notin \mathfrak B}\E\left(d^1\left(X^i_{t_\varepsilon},Y^i_{t_\varepsilon}\right)\right) + \sum_{i/x_i=y_i \in \mathfrak B}\E\left(d^1\left(X^i_{t_\varepsilon},Y^i_{t_\varepsilon}\right)\right) \\+ N\left(1+V_0\right)\left( \mathbb{P}\left(A(\mathbf X_{t_\varepsilon})>\alpha N\right) + \mathbb{P}\left(A(\mathbf Y_{t_\varepsilon})>\alpha N\right) \right)\,.
	\end{multline}
	Thanks to Lemmas~\ref{coupkilled} and \ref{decreaselya}, we have that the first sum is less than $\kappa_\varepsilon d(\mathbf{x},\mathbf{y})$.  From Lemma~\ref{deathprobability}, the second term is less than: \[C_3m_{\varepsilon}(1+2\beta V_0)\mathbf d(\mathbf{x},\mathbf{y}),\]
	and the third term is less than:
	\[\alpha C_3(1+2\beta V_0)\mathbf d(\mathbf{x},\mathbf{y}).\]
	Finally, thanks to Lemma~\ref{boundary}, the last term is less than: \[2N\left(1+V_0\right)q_{\varepsilon}^N\mathbf{d}(\mathbf{x},\mathbf{y})\leqslant \frac{-2\left(1+ V_0\right)}{e\ln(q_\varepsilon)}\mathbf d(\mathbf{x},\mathbf{y}).\]
	Putting all of this together we get:
	\[\E(\mathbf d(\mathbf{X}_{t_\varepsilon},\mathbf{Y}_{t_\varepsilon}))\leqslant s_\varepsilon \mathbf d(\mathbf{x},\mathbf{y})\]where \[s_{\varepsilon}=\kappa_\varepsilon + C_3\left(1+2\beta V_0\right)m_\varepsilon + \alpha C_3(1+2\beta V_0)+ \frac{-2\left(1+ V_0\right)}{e\ln(q_\varepsilon)}.\]As $\varepsilon$ goes to $0$, $s_\varepsilon$ goes to $\frac{1+2\beta C_1}{1+\beta C_2}\vee 4\beta C_1 + \alpha C_3(1+2\beta V_0)<1$ because of our choice of constants \eqref{cond1}. \par 
	Now, consider the case where $\mathbbm{1}_{A(\mathbf{x})>\alpha N}+\mathbbm{1}_{A(\mathbf{y})>\alpha N}>0$. Assume that $\mathbf{x}\neq\mathbf{y}$, the result being trivial otherwise since the processes stay equal for all times. In that case, $\mathbf d(\mathbf{x},\mathbf{y}) \geqslant N(1+ V_0)$ and we simply bound
	\[\E\left(d\left(\mathbf{X}_{t_\varepsilon},\mathbf{Y}_{t_\varepsilon}\right)\right) \leqslant N(1+2\beta V_0 + (1+V_0)q_\varepsilon)\leqslant \left(\frac{1+2\beta V_0}{1+V_0}+q_\varepsilon\right)\mathbf d(\mathbf{x},\mathbf{y}),\]
	for $\varepsilon$ small enough. 
	Since $r_\varepsilon := \frac{1+2\beta V_0}{1+V_0} + q_\varepsilon$ is strictly less than $1$ with our choice of constants \eqref{cond2} for $\varepsilon$ small enough, we conclude that, with $c=1-\sup_{\varepsilon<\varepsilon_0}s_\varepsilon\wedge r_\varepsilon>0$ where $\varepsilon_0$ is small enough, we have for all $\mathbf{x},\mathbf{y}\in D^N$:
	\[\E_{\mathbf{x},\mathbf{y}}\left(\mathbf d\left(\mathbf{X}_{t_\varepsilon},\mathbf{Y}_{t_\varepsilon}\right)\right) \leqslant (1-c) \mathbf d(\mathbf{x},\mathbf{y}).\]
	By conditioning with respect to the initial condition we get that:
	\[W_{\mathbf d}\left(\mu P_{t_\varepsilon}^N , \nu P_{t_\varepsilon}^N\right) \leqslant ( 1-c )W_{\mathbf d}\left(\mu ,  \nu \right),\]
	for all probability measures $\mu,\nu$ in $\mathcal{M}^1(D^N)$, and by iteration:
	\begin{align*}W_{\mathbf d}\left(\mu P_{t}^N , \nu P_{t}^N\right)&\leqslant (1-c)^{\left\lfloor t/t_\varepsilon \right\rfloor} W_{\mathbf d}\left(\mu P_{t-\left\lfloor t/ t_\varepsilon\right\rfloor t_\varepsilon}^N , \nu P_{t-\left\lfloor t/ t_\varepsilon\right\rfloor t_\varepsilon}^N\right) \\ &\leqslant (1 + 2(\beta+1)V_0)N (1-c)^{\left\lfloor t/t_\varepsilon \right\rfloor}.
	\end{align*}
	We conclude the first point of the theorem using $\mathbbm{1}_{\mathbf{x}\neq \mathbf{y}} \leqslant \mathbf d(\mathbf{x},\mathbf{y})$.

	Let's now prove the second point of Theorem~\ref{thm}. $\mathcal{M}^1\left(D^N\right)$ endowed with the distance $\mathbf d$ is a complete space. The contraction of $P_{t_\varepsilon}^N$ yield the existence and uniqueness of the stationary measure $\nu^{N,\varepsilon}_{\infty}$, as well as the exponential convergence of $\mu P_t^{N,\varepsilon}$ towards $\nu^{N,\varepsilon}_{\infty}$. If $\mu$ is exchangeable, then $\mu P_t^{N,\varepsilon}$ is exchangeable for all $t\geqslant 0$. The convergence of $\mu P_t^{N,\varepsilon}$ toward $\nu^{N,\varepsilon}_{\infty}$ implies that $\nu^{N,\varepsilon}_{\infty}$ is exchangeable. Now consider an optimal coupling for the distance $d$ of $\mu P_t^{N,\varepsilon}$ and $\nu^{N,\varepsilon}_{\infty} P_t^{N,\varepsilon}$. Using the exchangeability property we have:
	\begin{align*}
	\|\mu P^{N,k}_{t}-\nu^{N,\varepsilon,k}_{\infty}\|_{TV} &\leqslant \E\left(\mathbbm{1}_{\left(X_t^1,\dots,X^k_t\right)\neq \left(Y_t^1,\dots,Y^k_t\right)}\right) \\ &\leqslant \sum_{i=1}^k\E\left(\mathbbm{1}_{X^i_t\neq Y^i_t}\right) \\ &= k\E\left(\mathbbm{1}_{X^1_t\neq Y^1_t}\right) \\&\leqslant \frac{k}{N}\sum_{i=1}^N\E\left(\mathbbm{1}_{X^i_t\neq Y^i_t}\right),
	\end{align*}
	and we conclude with the first point of the theorem.
\end{proof}

\subsection{Propagation of chaos}\label{proof2}

Recall the definition of the empirical measure $\pi^N$ in \eqref{empiri}.
As said in the introduction, the goal is to get a uniform in time propagation of chaos result. We start from a propagation of chaos result, with a time dependency, from \cite{Villemonais}. Their result reads as follows:

\begin{prop}[\cite{Villemonais}, Theorem 1]\label{propag}
For all $\mu_0\in \mathcal M^1(D^N)$, considering $(X_t^i)_{t\geqslant 0}$  the FV process with initial condition $(X_0^i)$ which is a random variable of law $\mu_0$, and $(X_t)$ the diffusion \eqref{descente}, then, for all bounded $f:D\to\R_+$, all $\varepsilon>0$ and all $t\geqslant 0$:
\begin{multline*}
\E\left(\left|\int_D f\dd\pi^N(\textbf{X}_t) - \E_{\pi^N(\textbf X_0)}\left(f(X_t)|\tau_{\partial D}>t\right) \right|\right)\\
\leqslant\frac{2(1+\sqrt{2})\|f\|_{\infty}}{\sqrt{N}}\sqrt{\E\left(\frac{1}{\mathbb{P}_{\pi^N(\textbf X_0)}\left(\tau_{\partial D}>t\right)^2}\right)},    
\end{multline*}
where $\tau_{\partial D}$ is defined in \eqref{deathtime}.
\end{prop}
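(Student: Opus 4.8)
The plan is to reprove this estimate (it is \cite[Theorem~1]{Villemonais}) through the classical ``unbiased weight'' martingale attached to a Fleming--Viot system, combined with a bias--variance decomposition. Write $Q_sf(x)=\E_x\left(f(X_s)\mathbbm 1_{\tau_{\partial D}>s}\right)$ for the sub-Markovian semigroup of the killed diffusion, so that the quantity to be approximated equals $\pi^N(\mathbf X_0)Q_tf/\gamma_t$, where $\gamma_t:=\pi^N(\mathbf X_0)Q_t\mathbbm 1=\mathbb P_{\pi^N(\mathbf X_0)}(\tau_{\partial D}>t)$. Let $R_s$ be the number of rebirths in the whole system before time $s$ and set $W_s=\left(\tfrac{N-1}{N}\right)^{R_s}$. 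The first step is to check that, for any bounded $g$ and any fixed $t$, the process $M^g_s:=W_s\sum_{i=1}^N Q_{t-s}g(X^i_s)$, $s\in[0,t]$, is a martingale: between rebirth times $W_s$ is constant and each $s\mapsto Q_{t-s}g(X^i_s)$ is a bounded martingale (by $\partial_rQ_rg=\mathcal L Q_rg$ and Ito's formula), while at a rebirth time $\tau_k$ the particle that hits $\partial D$ carries the value $Q_{t-\tau_k}g=0$ and is resampled uniformly among the $N-1$ others, so the conditional jump of $\sum_iQ_{t-s}g(X^i_s)$ is $\tfrac1{N-1}$ times its pre-jump value, exactly cancelled by the factor $\tfrac{N-1}{N}$ gained by $W$. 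Evaluating $M^g$ at $s=0$ and $s=t$ gives, conditionally on $\mathbf X_0$, $\E\left(W_t\,\pi^N(\mathbf X_t)(g)\mid\mathbf X_0\right)=\pi^N(\mathbf X_0)Q_tg$, and in particular $\E(W_t\mid\mathbf X_0)=\gamma_t$ (take $g=\mathbbm 1$).

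Abbreviate $\Phi:=\pi^N(\mathbf X_0)Q_tf/\gamma_t\leqslant\|f\|_\infty$ (which is $\mathbf X_0$-measurable) and $\bar f_t:=\pi^N(\mathbf X_t)(f)$. From the identity
\[\bar f_t-\Phi=\frac1{\gamma_t}\left(W_t\bar f_t-\gamma_t\Phi\right)-\frac1{\gamma_t}\left(W_t-\gamma_t\right)\bar f_t,\]
together with $\gamma_t\Phi=\E(W_t\bar f_t\mid\mathbf X_0)$ and $\gamma_t=\E(W_t\mid\mathbf X_0)$ from the first step, the conditional Cauchy--Schwarz inequality yields
\[\E\left(\left|\bar f_t-\Phi\right|\,\middle|\,\mathbf X_0\right)\leqslant\frac1{\gamma_t}\sqrt{\mathrm{Var}\left(W_t\bar f_t\mid\mathbf X_0\right)}+\frac{\|f\|_\infty}{\gamma_t}\sqrt{\mathrm{Var}\left(W_t\mid\mathbf X_0\right)}.\]
It therefore suffices to prove a bound of the form $\mathrm{Var}(W_t\bar f_t\mid\mathbf X_0)\leqslant C\|f\|_\infty^2/N$ together with its $f\equiv\mathbbm 1$ instance $\mathrm{Var}(W_t\mid\mathbf X_0)\leqslant C/N$; taking expectations over $\mathbf X_0$ and using $\E(\gamma_t^{-1})\leqslant\E(\gamma_t^{-2})^{1/2}$ then gives the stated inequality, up to the value of the absolute constant.

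For the variance bound, $NW_t\bar f_t=M^f_t$ is a martingale started from the (given $\mathbf X_0$) deterministic $M^f_0$, so $N^2\mathrm{Var}(W_t\bar f_t\mid\mathbf X_0)=\E\left((M^f_t)^2\mid\mathbf X_0\right)-(M^f_0)^2$ and it is enough to bound $\E((M^f_t)^2\mid\mathbf X_0)$. Writing $\left(\sum_if(X^i_t)\right)^2=\sum_if(X^i_t)^2+\sum_{i\neq j}f(X^i_t)f(X^j_t)$, the diagonal contribution is controlled by $W_t^2\leqslant W_t$ and the first step, $\E\left(W_t^2\sum_if(X^i_t)^2\mid\mathbf X_0\right)\leqslant N\,\pi^N(\mathbf X_0)Q_t(f^2)\leqslant N\|f\|_\infty^2$. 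For the off-diagonal one runs the martingale argument one order up: $S_s:=\sum_{i\neq j}Q_{t-s}f(X^i_s)Q_{t-s}f(X^j_s)$ is a martingale between rebirths (cross-brackets vanish since the $B^i$'s are independent), and at a rebirth $\tau_k$ a direct computation gives $\E(\Delta S_{\tau_k}\mid\mathcal F_{\tau_k-})=2(N-1)\bar q_k^2\leqslant\tfrac2{N-1}S_{\tau_k-}+2\|f\|_\infty^2$, where $\bar q_k$ is the average of $Q_{t-\tau_k}f$ over the $N-1$ surviving particles (and $f\geqslant0$ guarantees $S_s\geqslant0$). Hence $T_s:=W_s^2S_s$ satisfies $\E(T_{\tau_k}\mid\mathcal F_{\tau_k-})\leqslant T_{\tau_k-}+2\|f\|_\infty^2W_{\tau_k-}^2$ and is a martingale between rebirths, so
\[\E\left(T_t\mid\mathbf X_0\right)\leqslant T_0+2\|f\|_\infty^2\,\E\left(\sum_{\tau_k\leqslant t}W_{\tau_k-}^2\,\middle|\,\mathbf X_0\right)\leqslant T_0+2\|f\|_\infty^2\sum_{k\geqslant1}\left(\tfrac{N-1}{N}\right)^{2(k-1)}\leqslant T_0+2N\|f\|_\infty^2,\]
using $\sum_{k\geqslant1}(1-1/N)^{2(k-1)}=N^2/(2N-1)\leqslant N$. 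Since $T_0=S_0\leqslant\left(N\pi^N(\mathbf X_0)Q_tf\right)^2=(M^f_0)^2$, adding the two contributions gives $\E((M^f_t)^2\mid\mathbf X_0)\leqslant(M^f_0)^2+3N\|f\|_\infty^2$, that is $\mathrm{Var}(W_t\bar f_t\mid\mathbf X_0)\leqslant3\|f\|_\infty^2/N$, and $f\equiv\mathbbm 1$ gives $\mathrm{Var}(W_t\mid\mathbf X_0)\leqslant3/N$, closing the argument.

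The crux is the last step. In the hard-killing regime the number of rebirths $R_t$ admits no bound uniform in $N$ (or in $t$), since the effective killing rate blows up near $\partial D$, so one cannot afford to pay a factor $R_t$ --- or even $\E R_t$ --- when estimating the quadratic variation. What saves the $1/N$ rate is precisely that the unbiased weight $W_s=(\tfrac{N-1}{N})^{R_s}$ decays geometrically in $R_s$: the contribution of the $k$-th rebirth to the quadratic variation carries the factor $W_{\tau_k-}^2=(1-1/N)^{2(k-1)}$ and the resulting geometric series sums to $O(N)$ uniformly in time. Combining this weighted summation with the second-moment martingale $S_s$ is what delivers both the $1/\sqrt N$ decay and the correct $\mathbb P(\tau_{\partial D}>t)$-dependent prefactor.
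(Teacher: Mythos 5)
The paper never proves Proposition~\ref{propag}: it is imported as is from \cite{Villemonais}, so there is no internal proof to compare against; your argument is a correct self-contained reconstruction in the spirit of that reference, built on the unbiased weight $W_s=((N-1)/N)^{R_s}$, the martingale $s\mapsto W_s\sum_i Q_{t-s}g(X^i_s)$ (zero-mean jumps because the killed particle carries the value $0$ and is resampled uniformly among the $N-1$ survivors), and the bias--variance split in terms of $\gamma_t=\mathbb P_{\pi^N(\mathbf X_0)}(\tau_{\partial D}>t)$. The computations check out: the jump estimate $\E(\Delta S_{\tau_k}\mid\mathcal F_{\tau_k-})=\tfrac{2}{N-1}\bigl(S_{\tau_k-}+\sum_{j\neq i_k}q_j^2\bigr)\leqslant\tfrac{2}{N-1}S_{\tau_k-}+2\|f\|_\infty^2$, the factor $\bigl(\tfrac{N-1}{N}\bigr)^2\bigl(1+\tfrac{2}{N-1}\bigr)=\tfrac{N^2-1}{N^2}\leqslant 1$ which makes $W^2S$ a supermartingale up to the compensator $2\|f\|_\infty^2\sum_{k\geqslant1}(1-1/N)^{2(k-1)}\leqslant 2N\|f\|_\infty^2$, the diagonal bound via $W_t^2\leqslant W_t$, and $T_0=S_0\leqslant (M^f_0)^2$ (using $f\geqslant0$) indeed give $\mathrm{Var}(W_t\bar f_t\mid\mathbf X_0)\leqslant 3\|f\|_\infty^2/N$, hence the constant $2\sqrt3\leqslant 2(1+\sqrt2)$, so the stated inequality follows. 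Two technical points are glossed over and deserve a sentence each: (i) applying It\^o's formula to $s\mapsto Q_{t-s}g(X^i_s)$ between rebirths uses parabolic regularity of the Dirichlet semigroup (interior smoothness and continuity up to $\partial D$, where $Q_rg$ vanishes), available here since $\partial D$ is $\mathcal C^2$ and $U$ is smooth near $\overline D$, together with a limiting argument at $s=t$ when $f$ is only bounded measurable (e.g. prove it for continuous $f$ and conclude by a monotone class argument); (ii) the summation over rebirth times is an optional-stopping/localization step, legitimate because $T_s\leqslant N^2\|f\|_\infty^2$ is bounded and the FV process undergoes a.s. finitely many rebirths before time $t$ by non-explosion \cite{villemonais2010interacting}. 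Compared with the paper's bare citation, your route buys a self-contained proof (with a marginally better constant) that also makes transparent why the estimate is uniform in $t$ despite the hard killing: the geometric decay of $W$ in the number of rebirths exactly offsets the absence of any bound on that number.
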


We also need a result on the convergence of the law of the diffusion \eqref{descente}, conditioned on survival, towards the QSD $\nu_{\infty}^{\varepsilon}$. This is from \cite{champagnat2018general}, although the statement is slightly modified to fit our setting.

\begin{prop}\label{convnonlin}
Under Assumption~\ref{assu1}, there exists $\varepsilon_0>0$ such that for all compact $K\subset D$, and all $0<\varepsilon<\varepsilon_0$, there exists $C_\varepsilon,\tilde{C}_\varepsilon,\lambda_\varepsilon,\chi_\varepsilon>0$ such that for all $\mu_0\in\mathcal M^1(D)$ satisfying $\mu_0(K)\geqslant 1/4$:
\[\|\mathbb{P}_{\mu_0}\left(X_t\in \cdot|\tau_{\partial D}>t\right)-\nu_{\infty}^{\varepsilon}\|_{TV}\leqslant C_\varepsilon e^{-\chi_\varepsilon t},\]and
\[\mathbb{P}_{\mu_0}\left(\tau_{\partial D}>t\right)\geqslant \tilde C_\varepsilon e^{-\lambda_\varepsilon t}.\]
\end{prop}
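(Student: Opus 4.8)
The plan is to derive Proposition~\ref{convnonlin} from the general criteria of \cite{champagnat2018general}: that abstract result asserts that a sub-Markovian evolution satisfying a Doeblin-type minorization on a suitable set, a Lyapunov-type control of the excursions away from this set, and a comparison estimate between survival probabilities started from different points, admits a unique QSD toward which the conditioned law converges exponentially fast, with constants depending on the initial law in an explicit way. So the proof amounts to checking these hypotheses for the diffusion \eqref{descente} killed on $\partial D$ --- writing $P^D_tf(x)=\E_x\bigl(f(X_t)\mathbbm1_{\tau_{\partial D}>t}\bigr)$ for the killed semigroup --- and then rephrasing the conclusion in the stated form.

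First I would reuse the function $V$ of Lemma~\ref{Lyapu} as Lyapunov function: the jump term is absent here, and the computation behind Lemma~\ref{Lyapu}, based on Proposition~\ref{Ito} with the branching replaced by killing, gives $\E_x\bigl(V(X_{t_\varepsilon})\mathbbm1_{\tau_{\partial D}>t_\varepsilon}\bigr)\le\gamma_\varepsilon V(x)+C_1(1-\gamma_\varepsilon)$ for all $x\in D$. The crucial point is that $V\equiv V_0=\sup_DV$ on $\partial D$, so every sublevel set $K_R=\{V\le R\}$ with $R<V_0$ is compactly contained in $D$. On $K_R$ the killed kernel has, for a fixed $t_0>0$, a continuous positive density with respect to Lebesgue measure (interior parabolic regularity and positivity of the Dirichlet heat kernel via the interior Harnack inequality for \eqref{descente}), which provides a minorization $\mathbb P_x(X_{t_0}\in\cdot,\ \tau_{\partial D}>t_0)\ge c_0\,\nu_\varepsilon(\cdot)$ for all $x\in K_R$, with $\nu_\varepsilon$ supported in $K_R$ and some $c_0>0$. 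The Lyapunov estimate together with the inward-pointing condition \eqref{return} then controls trajectories lying between $K_R$ and $\partial D$, while the Freidlin--Wentzell survival estimates underlying Proposition~\ref{prop:couplage} yield the comparison between $\mathbb P_{\nu_\varepsilon}(\tau_{\partial D}>t)$ and $\sup_{x\in D}\mathbb P_x(\tau_{\partial D}>t)$. With these ingredients the abstract theorem applies for all $\varepsilon<\varepsilon_0$, with $\varepsilon_0$ depending only on $U$ and $D$ and not on the compact $K$ of the statement, and produces a unique QSD --- which coincides with $\nu_\infty^\varepsilon$ of \cite{LelievreLeBris} by uniqueness --- a convergence rate $\chi_\varepsilon>0$ (equal to the spectral gap $\lambda_1(\varepsilon)-\lambda_0(\varepsilon)$ of the Dirichlet realization of $\mathcal L$ in $L^2(e^{-U/\varepsilon}\mathbbm1_D\,\dd x)$) and $\lambda_\varepsilon=\lambda_0(\varepsilon)$, together with bounds of the form $\|\mathbb P_{\mu_0}(X_t\in\cdot\mid\tau_{\partial D}>t)-\nu_\infty^\varepsilon\|_{TV}\le C\,e^{-\chi_\varepsilon t}\,\mu_0(V)/\mu_0(\varphi_\varepsilon)$ and $\mathbb P_{\mu_0}(\tau_{\partial D}>t)\ge c\,e^{-\lambda_\varepsilon t}\,\mu_0(\varphi_\varepsilon)$, where $\varphi_\varepsilon$ is the associated principal eigenfunction (continuous, positive on $D$, vanishing on $\partial D$). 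Since $V\le V_0$ and, $\varphi_\varepsilon$ being continuous and positive on $D$, $\inf_K\varphi_\varepsilon>0$, the hypothesis $\mu_0(K)\ge1/4$ gives $\mu_0(\varphi_\varepsilon)\ge\tfrac14\inf_K\varphi_\varepsilon>0$, which turns both bounds into the stated ones with $C_\varepsilon,\tilde C_\varepsilon$ depending only on $\varepsilon$ and $K$.

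The step I expect to be the main obstacle is the survival-comparison hypothesis, namely producing $c>0$ with $\mathbb P_{\nu_\varepsilon}(\tau_{\partial D}>t)\ge c\,\sup_{x\in D}\mathbb P_x(\tau_{\partial D}>t)$ for all $t\ge0$. This is the classically delicate point for processes killed on the boundary of a domain, since the survival probability degenerates near $\partial D$; one has to combine \eqref{return} (a trajectory starting near $\partial D$ is either absorbed quickly or pushed back into the bulk) with the Lyapunov control to show that, after one step of length $t_0$, the law conditioned on survival has put a fixed fraction of its mass inside some $K_R$, so that long-time survival is, up to a constant, governed by bulk starting points. A self-contained alternative avoiding \cite{champagnat2018general} is the spectral route implicit above: the Dirichlet realization of $\mathcal L$ has discrete spectrum with a spectral gap, ultracontractivity of $P^D_t$ (from a Dirichlet Sobolev inequality, as in Lemma~\ref{Sob}) upgrades the $L^2$ spectral expansion to $L^\infty$, and writing the conditioned expectation as the ratio $\mu_0(P^D_tg)/\mu_0(P^D_t\mathbbm1)$ yields the two estimates directly, with $\mu_0(\varphi_\varepsilon)\ge\tfrac14\inf_K\varphi_\varepsilon$ again playing the decisive role in the denominator.
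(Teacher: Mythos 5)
Your proposal takes essentially the same route as the paper for the total variation estimate: invoke \cite{champagnat2018general} with the Lyapunov function $V$ from Lemma~\ref{Lyapu}, then turn the resulting bound involving $\mu_0(V)/\mu_0(\phi)$ into the stated one using $V\leqslant V_0$ and $\mu_0(\phi)\geqslant\frac14\inf_K\phi$ with $\phi$ the continuous positive function produced by that reference. The paper is more economical on the verification step: rather than checking the Doeblin minorization and the survival-comparison condition separately (which is the step you flag as the main obstacle), it invokes the sufficient Lyapunov-type criterion of \cite[Section 4]{champagnat2018general} --- equation (4.7) there with $D_0=F$, $\varphi=V$, a drift rate $\lambda_1$ independent of $\varepsilon$, and the observation that the constant $\lambda_0$ of their equation (4.4) vanishes as $\varepsilon\to0$, so $\lambda_1>\lambda_0$ for $\varepsilon$ small --- which packages precisely the hard conditions you were worried about. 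Where you genuinely diverge is the second bound $\mathbb{P}_{\mu_0}(\tau_{\partial D}>t)\geqslant\tilde C_\varepsilon e^{-\lambda_\varepsilon t}$: you extract it from the same machinery as $\mathbb{P}_{\mu_0}(\tau_{\partial D}>t)\geqslant c\,e^{-\lambda_0 t}\mu_0(\varphi_\varepsilon)$ and again bound $\mu_0(\varphi_\varepsilon)\geqslant\frac14\inf_K\varphi_\varepsilon$, which is valid. The paper instead gives a separate, self-contained argument: fixing $\tilde F=F\cup\{U\leqslant\tilde U_0\}\supset K$ and comparing $X$ to the deterministic gradient flow over a fixed window $[0,T]$ via Gronwall plus Gaussian tails, one gets $\mathbb{P}(\mathcal{E}_{i+1}\mid\mathcal{E}_i)\geqslant 1-e^{-b/\varepsilon}$ for the nested events $\mathcal{E}_i=\{\tau_{\partial D}>(i+1)T,\ X_{(i+1)T}\in\tilde F\}$, hence $\mathbb{P}_x(\tau_{\partial D}>t)\geqslant(1-e^{-b/\varepsilon})^{t/T}$ uniformly for $x\in\tilde F$. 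Your route is shorter given that the abstract theorem is already in play; the paper's argument is independent of that machinery and gives an explicit, quantitative dependence of $\lambda_\varepsilon$ on $\varepsilon$. Both establish the claim.
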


\begin{proof}
The process~\eqref{descente} satisfies equation~(4.7) of \cite{champagnat2018general}, with $D_0=F$, where $F$ was defined in the proof of Lemma~\ref{Lyapu}, some $\lambda_1$ independent from $\varepsilon$, and $\varphi=V$. The constant $\lambda_0$ defined in equation~(4.4) of \cite{champagnat2018general} goes to zero as $\varepsilon$ goes to zero, hence we may chose $\varepsilon_0$ such that for all $\varepsilon<\varepsilon_0$, $\lambda_1>\lambda_0$, and assumption of \cite[Corollary 4.3]{champagnat2018general} hold true. From \cite[Theorem 4.1]{champagnat2018general}, this yields the existence of the $QSD$ $\nu_{\infty}^{\varepsilon}$ and of some function $\phi:D\to\R_+^*$, uniformly bounded away from $0$ on all compact subsets of $D$ such that for all $\mu_0\in\mathcal M^1(D)$:
\[
\|\mathbb{P}_{\mu_0}\left(X_t\in \cdot|\tau_{\partial D}>t\right)-\nu_{\infty}^{\varepsilon}\|_{TV}\leqslant C_\varepsilon e^{-\chi_\varepsilon t}\mu_0(V)/\mu_0(\phi).
\]
Since $V$ is bounded,  if $\mu_0(K)\geqslant 1/4$, then $\mu_0(\phi)\geqslant 1/4\min_{K}\phi$, and we get that
\[\|\mathbb{P}_{\mu_0}\left(X_t\in \cdot|\tau_{\partial D}>t\right)-\nu_{\infty}^{\varepsilon}\|_{TV}\leqslant C_\varepsilon e^{-\chi_\varepsilon t}\frac{4\|V\|_{\infty}}{\inf_{K}\phi}.\]

For the second point, write:
\[\mathbb{P}_{\mu_0}\left(\tau_{\partial D}>t\right)\geqslant \frac{1}{4}\inf_{x\in K}\mathbb P_x(\tau_{\partial D}>t).\]
Now fix $0<\tilde U_1<\tilde U_0$ such that $K\subset F\cup \left\{U\leqslant \tilde U_0\right\}=:\tilde F$, and \[\tilde \zeta := \inf_{(F\cup \left\{U\leqslant \tilde U_1\right\})^c}|\na U|>0\].
Fix $T>0$ such that $\tilde U_0-\tilde\zeta^2 T< \tilde U_1$, and some $\delta >0$ such that:
\[\delta < \min\left(dist \left(F\cup \left\{U\leqslant \tilde U_1\right\},(F\cup \left\{U\leqslant \tilde U_0\right\})^c \right), dist\left(F\cup \left\{U\leqslant \tilde U_0\right\},\R^d\setminus D\right) \right).\]

Write:
\[\mathcal E = \left\{ \tau_{\partial D}>T, X_T \in \tilde F \right\},\]
and recall the definition~\eqref{def:flow} of $\varphi$.
With our choice of $T$, $\tilde U_0$ and $\tilde U_1$, we have that $\varphi_T \in F\cup \left\{U\leqslant \tilde U_1\right\}$ for all $x\in \tilde F$.
We get using Gronwall's Lemma:
\[\sup_{0\leqslant t\leqslant T}|X_t-\varphi_t| \leqslant e^{\|\na^2U\|_{\infty}T}\sqrt{2\varepsilon}\sup_{0\leqslant t\leqslant T}|B_t|.\]
If $W$ is a one-dimensional Brownian motion, $\sup_{0\leqslant t\leqslant T}W_t$ has the law of $|G|$ where $G$ is a standard Normal random variable. Hence:
\[\mathbb P\left(\sup_{0\leqslant t\leqslant T}|X_t-\varphi_t|\geqslant \delta \right) \leqslant 4d\mathbb P\left(G\geqslant \frac{\delta e^{-\|\na^2U\|_{\infty}T}}{T\sqrt{2\varepsilon}} \right) \leqslant 4de^{-\frac{b}{\varepsilon}}, \]where $b=\frac{\delta e^{-\|\na^2U\|_{\infty}T}}{2\sqrt{2}T}$.

Now write
\[\mathcal E_i = \left\{\tau_{\partial D} >(i+1)T \text{ and }X_{(i+1)T} \in \tilde F \right\}.\]
We showed that for $\varepsilon$ small enough, $\mathbb P\left( \mathcal E_{i+1}|\mathcal E_{i}\right) \geqslant 1-e^{-b/\varepsilon}$. We also have for our choice of $\delta$:\[\left\{ \tau_{\partial D}< t_\varepsilon \right\} \subset \mathcal{E}_{\left\lceil t/T\right\rceil}^c.\]Hence for all $x\in\tilde F$:
\[\mathbb  P_{x}\left( \tau_{\partial D} >t  \right) \geqslant 
\left(1-e^{-b/\varepsilon}\right)^{t/T},\] and thus the result.
\end{proof}

In our metastable setting, this already yields propagation of chaos at equilibrium. Indeed, if the FV process starts from its stationary measure, its law won't change. But then from Proposition~\ref{propag}, the empirical measure of this process is close to the law of the process \eqref{descente} conditioned on survival at time $t\geqslant0$ starting from $\nu_{\infty}^{N,\varepsilon}$, which is itself close to the $QSD$ if $t$ is large enough.

\begin{lem}\label{propeq}
Under Assumption~\ref{assu1} and~\ref{assu2}, there exists $\varepsilon_0>0$ such that for all $0<\varepsilon<\varepsilon_0$, there exists $C_\varepsilon,\eta_{\varepsilon,1}>0$ such that if $\textbf{X}_{\infty}$ is a random vector of law $\nu^{N,\varepsilon}_{\infty}$ on $D^{N}$, then for all bounded function $f:D\to\R_+$:
\[\E\left(\left|\int_D f \dd\pi^N(\textbf{X}_{\infty}) - \int_D f\dd\nu_{\infty}^{\varepsilon}\right|\right)\leqslant \frac{C_\varepsilon\|f\|_{\infty}}{N^{\eta_{\varepsilon,1}}}.\]
\end{lem}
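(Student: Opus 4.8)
The plan is to run the Fleming--Viot process started from a truncated version of $\nu^{N,\varepsilon}_\infty$ for a time $t=t_N$ growing logarithmically in $N$, combining the finite-time (but $t$-dependent) propagation of chaos of Proposition~\ref{propag} with the exponential relaxation of the conditioned diffusion towards the QSD from Proposition~\ref{convnonlin}, and to close the loop using stationarity of $\nu^{N,\varepsilon}_\infty$. The truncation is needed because the bound in Proposition~\ref{propag} involves $\mathbb P_{\pi^N(\mathbf X_0)}(\tau_{\partial D}>t)^{-2}$, which has no deterministic lower bound over $D^N$. Concretely, set $K=\{V\leqslant 3C_1\}$, which by Lemma~\ref{Lyapu} (since $V\equiv V_0>4C_1$ on $\partial D$) is a compact subset of $D$ with $D\setminus K=B$ and $\pi^N(\mathbf x)(K)=1-A(\mathbf x)/N$, $A,B$ being as in Section~\ref{partnearbound}. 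Fix $\alpha\in(0,1/4)$. Using the invariance of $\nu^{N,\varepsilon}_\infty$ under $P^N_{t_\varepsilon}$ together with Lemma~\ref{boundary},
\[\mathbb P_{\nu^{N,\varepsilon}_\infty}\bigl(A(\mathbf X_0)>\alpha N\bigr)=\int_{D^N}\mathbb P_{\mathbf x}\bigl(A(\mathbf X_{t_\varepsilon})>\alpha N\bigr)\,\nu^{N,\varepsilon}_\infty(\dd\mathbf x)\leqslant q_\varepsilon^N.\]
Writing $p=\mathbb P_{\nu^{N,\varepsilon}_\infty}(A(\mathbf X_0)\leqslant\alpha N)\geqslant 1-q_\varepsilon^N$, which is positive for $\varepsilon$ small enough, I set $\mu_0=\nu^{N,\varepsilon}_\infty(\,\cdot\mid A\leqslant\alpha N)$, so that $\pi^N(\mathbf x)(K)\geqslant 1-\alpha\geqslant 1/4$ for $\mu_0$-almost every $\mathbf x$.

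Then I fix $t>0$ and start the FV process from $\mu_0$. Since $\pi^N(\mathbf X_0)(K)\geqslant 1/4$ almost surely, the survival lower bound of Proposition~\ref{convnonlin} (for this fixed $K$) gives $\mathbb P_{\pi^N(\mathbf X_0)}(\tau_{\partial D}>t)\geqslant\tilde C_\varepsilon e^{-\lambda_\varepsilon t}$ almost surely, hence $\E_{\mu_0}\bigl(\mathbb P_{\pi^N(\mathbf X_0)}(\tau_{\partial D}>t)^{-2}\bigr)\leqslant\tilde C_\varepsilon^{-2}e^{2\lambda_\varepsilon t}$, and Proposition~\ref{propag} yields
\[\E_{\mu_0}\left(\left|\int_D f\,\dd\pi^N(\mathbf X_t)-\E_{\pi^N(\mathbf X_0)}\left(f(X_t)\,\middle|\,\tau_{\partial D}>t\right)\right|\right)\leqslant\frac{2(1+\sqrt2)\,\|f\|_\infty}{\tilde C_\varepsilon\sqrt N}\,e^{\lambda_\varepsilon t}.\]
The same lower bound on $\pi^N(\mathbf X_0)(K)$ makes the total-variation estimate of Proposition~\ref{convnonlin} applicable, so $\bigl|\E_{\pi^N(\mathbf X_0)}(f(X_t)\mid\tau_{\partial D}>t)-\int_D f\,\dd\nu_\infty\bigr|\leqslant C_\varepsilon\|f\|_\infty e^{-\chi_\varepsilon t}$ almost surely; by the triangle inequality, for every $t>0$,
\[\E_{\mu_0}\left(\left|\int_D f\,\dd\pi^N(\mathbf X_t)-\int_D f\,\dd\nu_\infty\right|\right)\leqslant\|f\|_\infty\Bigl(\tfrac{2(1+\sqrt2)}{\tilde C_\varepsilon\sqrt N}\,e^{\lambda_\varepsilon t}+C_\varepsilon\,e^{-\chi_\varepsilon t}\Bigr).\]

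Next I return to $\nu^{N,\varepsilon}_\infty$: by stationarity $\mathbf X_\infty$ has the law of $\mathbf X_t$ started from $\nu^{N,\varepsilon}_\infty=p\,\mu_0+(1-p)\,\mu_0'$, where $\mu_0'=\nu^{N,\varepsilon}_\infty(\,\cdot\mid A>\alpha N)$ and $1-p\leqslant q_\varepsilon^N$. With $g(\mathbf z)=\bigl|\int_D f\,\dd\pi^N(\mathbf z)-\int_D f\,\dd\nu_\infty\bigr|\in[0,2\|f\|_\infty]$, and using $g\geqslant 0$ and $p\leqslant 1$,
\[\E\bigl(g(\mathbf X_\infty)\bigr)=p\!\int g\,\dd(\mu_0P^N_t)+(1-p)\!\int g\,\dd(\mu_0'P^N_t)\leqslant\E_{\mu_0}\bigl(g(\mathbf X_t)\bigr)+2\|f\|_\infty q_\varepsilon^N,\]
so combining with the previous display, for every $t>0$,
\[\E\bigl(g(\mathbf X_\infty)\bigr)\leqslant\|f\|_\infty\Bigl(\tfrac{2(1+\sqrt2)}{\tilde C_\varepsilon\sqrt N}\,e^{\lambda_\varepsilon t}+C_\varepsilon\,e^{-\chi_\varepsilon t}+2q_\varepsilon^N\Bigr).\]
Choosing $t=t_N:=\frac{\ln N}{2(\lambda_\varepsilon+\chi_\varepsilon)}$ makes the first two terms both of order $N^{-\chi_\varepsilon/(2(\lambda_\varepsilon+\chi_\varepsilon))}$, while $q_\varepsilon^N$ decays faster than any power of $N$ since $q_\varepsilon\in(0,1)$; this gives the lemma with $\eta_{\varepsilon,1}=\chi_\varepsilon/\bigl(2(\lambda_\varepsilon+\chi_\varepsilon)\bigr)$ after enlarging the constant.

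I expect the main obstacle to be exactly the control of the random factor $\mathbb P_{\pi^N(\mathbf X_0)}(\tau_{\partial D}>t)^{-2}$ appearing in Proposition~\ref{propag}: only the event $\{A(\mathbf X_0)\leqslant\alpha N\}$ — which forces a fixed positive fraction of the particles into the fixed compact $K$ and is overwhelmingly likely under $\nu^{N,\varepsilon}_\infty$ thanks to Lemma~\ref{boundary} — lets Proposition~\ref{convnonlin} bound this factor. A secondary point is that one should resist the temptation to replace $\nu^{N,\varepsilon}_\infty$ by a deterministic initial condition and use the long-time contraction of Theorem~\ref{thm}: its rate $CN(1-c)^{t/t_\varepsilon}$ carries a factor $N$ that the logarithmic choice of $t$ cannot absorb, whereas routing the argument through stationarity keeps the only loss at the harmless $q_\varepsilon^N$.
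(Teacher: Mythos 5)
Your argument is correct and follows essentially the same route as the paper's proof: exploit the stationarity of $\nu^{N,\varepsilon}_\infty$, use Lemma~\ref{boundary} to ensure that with overwhelming probability a positive fraction of the particles lie in the fixed compact $K=D\setminus B$, feed this into Propositions~\ref{propag} and~\ref{convnonlin} with a time $t$ logarithmic in $N$, and balance the two resulting polynomial rates. The only minor difference is organizational: you condition $\nu^{N,\varepsilon}_\infty$ on $\{A\leqslant\alpha N\}$ before invoking Proposition~\ref{propag} and recombine via the convex decomposition $\nu^{N,\varepsilon}_\infty=p\,\mu_0+(1-p)\,\mu_0'$, whereas the paper restricts the expectations to the event $\mathcal A^c$; your variant makes the control of the random factor $\mathbb P_{\pi^N(\mathbf X_0)}(\tau_{\partial D}>t)^{-2}$ in Proposition~\ref{propag} a little more transparent, but the mechanism and the resulting rate are the same.
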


\begin{proof}
Assumptions~\ref{assu1} and~\ref{assu2} yield the existence of $\nu_{\infty}^{N,\varepsilon}$. Introduce the FV process $(\textbf{X}_t)_{t\geqslant0}$ with initial condition $\textbf X_{\infty}$. By definition, the law of $\textbf{X}_t$ is $\nu_\infty^{N,\varepsilon}$ for all $t\geqslant 0$. Recall the definition of $\mathfrak B$ and $A$ from \eqref{numbbound}, and set $K=D\setminus \mathfrak B$. Since $\nu^{N,\varepsilon}_\infty$ is the stationary measure of the FV process, we get from Lemma~\ref{boundary} applied with $\alpha=1/2$ that for all $t\geqslant 0$:
\[\mathbb{P}\left(A\left(\textbf{X}_{t}\right)>N/2\right)\leqslant q_{\varepsilon}^N,\]where $q_\varepsilon>0$ goes to zero as $\varepsilon$ goes to zero. Recall the definition of $\lambda_{\varepsilon}$ from Proposition~\ref{convnonlin}, let $t=b\ln(N)$ for some $0<b<1/(2\lambda_{\varepsilon})$, and write $\mathcal{A}=\left\{ G_1\left(\textbf{X}_{t}\right) >N/2\right\}$. We have that:
\[\E\left(\left|\int_D f \dd\pi^N(\textbf{X}_{\infty}) - \int_D f\dd\nu_{\infty}^{\varepsilon}\right|\right)=\E\left(\left|\int_D f \dd\pi^N(\textbf{X}_{t}) - \int_D f\dd\nu_{\infty}^{\varepsilon}\right|\right),\]
and
\begin{multline*}
\E\left(\left|\int_D f \dd\pi^N(\textbf{X}_{\infty}) - \int_D f\dd\nu_{\infty}^{\varepsilon}\right|\right)=\E\left(\left|\int_D f \dd\pi^N(\textbf{X}_{t}) - \int_D f\dd\nu_{\infty}^{\varepsilon}\right|\right) \\ \leqslant  \E\left(\left|\int_D f \dd\pi^N(\textbf{X}_{t})-\E_{\pi^N(\textbf{X}_{\infty})}(f(X_t)|\tau_{\partial D}>t)\right|\right) \\+ \E\left(\left|\E_{\pi^N(\textbf{X}_{\infty})}(f(X_t)|\tau_{\partial D}>t)-\int_D f \dd\nu_{\infty}^{\varepsilon}\right|\right).
\end{multline*}
On $\mathcal G_1^c$, we have that $\pi^N(\textbf{X}_{\infty})(K)\geqslant 1/2$. Hence, from Proposition~\ref{propag} and \ref{convnonlin}, there exists $C>0$ such that:
\begin{equation*}
\E\left( \left| \int_D f\dd\pi^N(\textbf{X}_t) - \E_{\pi^N(\textbf{X}_{\infty})}(f(X_t)|\tau_{\partial D}>t)\right|\right) \leqslant \frac{C\|f\|_{\infty}}{\sqrt{N}}\frac{1}{N^{-b\lambda_\varepsilon}-q_\varepsilon^N} =\frac{C\|f\|_{\infty}}{N^{1/2-b\lambda_\varepsilon}}.
\end{equation*} 
From Proposition~\ref{convnonlin}, we get that:
\[\E\left(\left|\E_{\pi^N(\textbf{X}_{\infty})}(f(X_t)|\tau_{\partial D}>t)-\int_Df \dd\nu_{\infty}^{\varepsilon}\right|\right)\leqslant \left(Ce^{-\chi_{\varepsilon}t}+q_\varepsilon^N\right)\|f\|_{\infty} = \frac{C\|f\|_{\infty}}{N^{b \chi_\varepsilon}}.\]
The fact that we chose $b<1/2\lambda_\varepsilon$ concludes, as soon as $\varepsilon$ is small enough so that $q_\varepsilon<1$.
\end{proof}

\begin{proof}[Proof of Theorem~\ref{thm2}]
Fix some $\varepsilon>0$, some compact $K\subset D$, $\mu_0\in\mathcal M^1(D)$ such that $\mu_0(K)\geqslant 1/2$, and a random variable $\textbf X_0$ of law $\mu_0^{\otimes N}$. Write:
\[\mathcal G_2 = \left\{\pi^N(\textbf X_0)(K) \geqslant 1/4 \right\}.\]
We have that $\E(\pi^N(\textbf X_0)(K)) = \mu_0(K)$, and $\text{Var} (\pi^N(\textbf X_0)(K)) = \mu_0(K)(1-\mu_0(K))/N$. Hence we have:
\[\mathbb P(\mathcal G_2^c) \leqslant 4/N.\]
We now fix $0<b<1/(2\lambda_\varepsilon)$. For all $t\leqslant b\ln(N)$, from Propositions~\ref{propag} and \ref{convnonlin}, we get that there exists $C>0$ such that:
\begin{align*}
\E\left( \left|\int_D f\dd \pi^N(\textbf{X}_t) - \E_{\pi^N(\textbf X_0)}\left(f\left(X_t\right)\middle|\tau_{\partial D} >t\right) \right| \right) \leqslant \frac{C\|f\|_{\infty}}{\sqrt{N}}\sqrt{\frac{1}{N^{-b\lambda_\varepsilon}-CN^{-1}}}\leqslant \frac{C\|f\|_{\infty}}{N^{1/2-b\lambda_\varepsilon/2}}.
\end{align*}
If $t\geqslant b\ln(N)$, we consider a random variable $(\textbf X_t,\textbf{X}_\infty)$ which is an optimal coupling of $\mu_0^{\otimes N}P_t^{N,\varepsilon}$ and $\nu_{\infty}^{N,\varepsilon}$ for the distance $\mathbf d$ defined in~\eqref{distance}. We then bound as follow:
\begin{align*}
\E\left( \left|\int_D f\dd \pi^N(\textbf{X}_t) - \E_{\pi^N(\textbf X_0)}\left(f\left(X_t\right)\middle|\tau_{\partial D} >t\right) \right| \right) &\leqslant \E\left( \left|\int_D f\dd \pi^N(\textbf{X}_t) -\int_D f\dd \pi^N(\textbf{X}_\infty) \right| \right) \\ &+ \E\left( \left|\int_D f\dd \pi^N(\textbf{X}_\infty) - \int_D f \dd\nu_\infty^{\varepsilon} \right| \right) \\ &+ \E\left( \left|\int_D f \dd\nu_\infty - \E_{\pi^N(\textbf X_0)}\left(f\left(X_t\right)\middle|\tau_{\partial D} >t\right) \right| \right).
\end{align*}
From the proof of Theorem~\ref{thm}, the first term is bounded by:
\[\frac{\E(d(\textbf X_t,\textbf{X}_\infty))}{N}\|f\|_{\infty}\leqslant C(1-c)^{t/t_\varepsilon}\|f\|_{\infty}\leqslant CN^{\ln(1-c)/t_\varepsilon}\|f\|_{\infty}.\]
From Lemma~\ref{propeq}, the second term is bounded by:
\[\frac{C}{N^{\eta_{\varepsilon,1}}}\|f\|_{\infty}.\]
Finally, by Proposition~\ref{convnonlin}, the third term is bounded by:
\[\left(Ce^{-\chi_\varepsilon t}+2\mathbb P(\mathcal G_2)\right)\|f\|_{\infty}\leqslant CN^{-\chi_\varepsilon b}\|f\|_{\infty},\]and thus the result.
\end{proof}

\section{Establishing  Assumption~\ref{assu2}}\label{technique}

This section is devoted to the proof of the following:

\begin{lem}\label{unifexitevent}
Under Assumptions~\ref{assu1} and \ref{assu3}, Assumption~\ref{assu2} holds.
\end{lem}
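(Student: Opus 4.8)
The goal is to exhibit, in each of the two cases of Assumption~\ref{assu3}, a level $a\in(c^*,U_0)$ and a neighbourhood $B_1$ of $\partial D$ for which the three items of Assumption~\ref{assu2} hold. The first two items are easy to arrange: since $U\equiv U_0>c^*$ on $\partial D$ by \eqref{u0} and $\na U\neq0$ near $\partial D$ by \eqref{return}, one fixes $a$ slightly above $c^*$ (so still $a<U_0$) and takes $B_1$ a small enough neighbourhood of $\partial D$, with $B_1\subset\tilde B_1$ in the second case of Assumption~\ref{assu3} (which makes $\min_{B_1}|\na U|\geqslant\inf_{\tilde B_1}|\na U|>0$ automatic) and shrunk so that $\min_{B_1}U>a$. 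Writing $K=\overline{D\setminus B_1}$, a fixed compact subset of $D$, everything then reduces to proving
\[\bar p_\varepsilon=\mathbb P\big(\exists x\in K:\ \tau_{\partial D}(X^x)<e^{a/\varepsilon}\big)\xrightarrow[\varepsilon\to0]{}0 .\]

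\textbf{The case $d=1$.} Here $D$ is an open interval and $K=[x_-,x_+]$. Because all the diffusions $(X^x)_{x\in D}$ are driven by the \emph{same} Brownian motion, comparison for one-dimensional SDEs shows that $x\mapsto X^x_t$ is nondecreasing for every $t$. Consequently, if some $X^x$ with $x\in K$ leaves $D$ through the left (resp.\ right) endpoint before time $t_\varepsilon$, then so does $X^{x_-}$ (resp.\ $X^{x_+}$), whence
\[\bar p_\varepsilon\leqslant\mathbb P\big(\tau_{\partial D}(X^{x_-})<t_\varepsilon\big)+\mathbb P\big(\tau_{\partial D}(X^{x_+})<t_\varepsilon\big),\]
and each term tends to $0$ by the Freidlin--Wentzell estimate of Proposition~\ref{prop:couplage} (equivalently \cite[Ch.~6, Thm~6.2]{Freidlin-Wentzell}, applicable to the fixed interior points $x_\pm$ since $a<U_0$). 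This settles the one-dimensional case, and is essentially the reason the hypothesis is vacuous for $d=1$.

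\textbf{The case $d\geqslant2$.} Now the stochastic flow carries no order, and the real difficulty is to upgrade the pointwise bound ``$\mathbb P_x(\tau_{\partial D}<t_\varepsilon)$ is small for each fixed $x$'' to the uniform statement above. The plan is to combine two ingredients. First, a Freidlin--Wentzell cost estimate using the ``moat'' $\tilde B_1$: a trajectory issued from $K$ that reaches $\partial D$ must cross $\tilde B_1$, in which $U$ ranges in $[U_1,U_0]$ and $\na U$ does not vanish, and the elementary lower bound on the action
\[\tfrac14\int_0^T\big|\dot\gamma_t+\na U(\gamma_t)\big|^2\,\dd t\ \geqslant\ \tfrac12\big(U(\gamma_T)-U(\gamma_0)\big),\qquad T>0,\]
applied along the portion of the path crossing the moat (entered near its floor), shows that such a trajectory has cost at least $\tfrac12(U_0-U_1)-o(1)$; by Assumption~\ref{assu3} this exceeds $c^*$, hence $a$ if $a$ was taken close enough to $c^*$. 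Second, a reduction from the uncountable family $(X^x)_{x\in K}$ to a finite one: split $[0,t_\varepsilon]$ into $n=\lceil t_\varepsilon/T\rceil$ windows of a fixed length $T$ and, on each window, use the Gronwall bound $|X^x_t-X^y_t|\leqslant|x-y|e^{\|\na^2U\|_\infty t}$ over the \emph{bounded} time $T$ to transfer control of a fixed finite $\eta$-net of $K$ to all of $K$ at the cost of a fixed enlargement of the thresholds. On the event that, over one window, no netted trajectory comes within a fixed distance of $\partial D$, the deterministic gradient flow keeps $K$ in a fixed compact of $D$, the Freidlin--Wentzell upper bound (uniform over that compact and over $[0,T]$) gives failure probability at most $M\,e^{-(a+\delta)/\varepsilon}$ for some $\delta>0$, and moreover all trajectories return into a fixed interior compact at the end of the window, so the estimate can be renewed via the Markov property. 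Summing over the $n\asymp e^{a/\varepsilon}$ windows yields $\bar p_\varepsilon\leqslant nM\,e^{-(a+\delta)/\varepsilon}\to0$.

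\textbf{Main obstacle.} The delicate part is this last step for $d\geqslant2$: one must reconcile the conflicting requirements on the window length $T$ (large enough that the deterministic flow brings $K$ back into a fixed interior compact, so the renewal closes up; but controlled enough that the per-window exit cost stays uniformly above $a$ after absorbing the losses from the finite net and from the $\asymp e^{a/\varepsilon}$ windows), and one must handle the interior geometry of $U$ on $D$ (in particular possible high regions of $U$ inside $D$) in the cost estimate. It is precisely to keep enough slack in this budget — using the crude, non-optimal action bound $\tfrac12(U_0-U_1)$ rather than the sharp value $U_0-U_1$ — that the quantitative gap $c^*<(U_0-U_1)/2$, rather than merely $c^*<U_0$, is imposed, and we do not expect it to be sharp.
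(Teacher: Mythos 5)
Your one-dimensional argument is essentially the paper's: the flow $x\mapsto X^x_t$ driven by a common Brownian motion is monotone, so the uncountable supremum collapses onto the two boundary points of $K$, and the usual Freidlin--Wentzell estimate at those fixed points finishes it. Your overall architecture for $d\geqslant2$ is also the paper's: split $[0,t_\varepsilon]$ into $\asymp e^{a/\varepsilon}$ windows of fixed length $T$, transfer a finite-net estimate to all of $K$ via Gronwall, bound the per-window failure probability by $Me^{-(a+\gamma)/\varepsilon}$ with $\gamma>0$ using Freidlin--Wentzell, renew by the Markov property, and union-bound over windows. So the plan is correct in outline.

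However, there are two concrete gaps in the $d\geqslant2$ step, both in the cost accounting.

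First, the action lower bound you invoke, namely $\tfrac14\int_0^T|\dot\gamma+\na U(\gamma)|^2\,\dd t\geqslant\tfrac12\bigl(U(\gamma_T)-U(\gamma_0)\bigr)$, is the weaker of the two elementary rewritings. The paper uses the sharper identity $\tfrac14\int_0^T|\dot\gamma+\na U(\gamma)|^2\,\dd t=\tfrac14\int_0^T|\dot\gamma-\na U(\gamma)|^2\,\dd t+U(\gamma_T)-U(\gamma_0)\geqslant U(\gamma_T)-U(\gamma_0)$, with no factor $\tfrac12$. Your attribution of the halving in $c^*<(U_0-U_1)/2$ to the crude bound is therefore not what happens: the $\tfrac12$ comes from the placement of the renewal level at the midpoint. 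Indeed the paper takes $B_1=\{U>(U_0+U_1)/2\}\cap\tilde B_1$, so at the start of each window the process sits at energy at most $(U_0+U_1)/2$; the (sharp) cost to reach near $U_0$ from there is $\approx(U_0-U_1)/2$, and the (sharp) cost to climb from the moat floor $U_1$ back to the renewal level is also $\approx(U_0-U_1)/2$. Balancing these two gives exactly $a<(U_0-U_1)/2$. If you instead run the same bookkeeping with your $\tfrac12\Delta U$ bound on both the exit cost and the return cost, you only get $a<(U_0-U_1)/4$, which does not reach the hypothesis $c^*<(U_0-U_1)/2$. So the crude bound is not just aesthetically suboptimal; it fails to prove the lemma under Assumption~\ref{assu3} as stated.

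Second, the renewal step — ``all trajectories return into a fixed interior compact at the end of the window'' — is asserted but needs its own quantitative estimate, and it is in fact the delicate half of the Freidlin--Wentzell argument. The paper introduces a third level $U_3\in(U_1+a,(U_0+U_1)/2)$ and shows $\mathbb P\bigl(U(X^x_T)\geqslant U_3\bigr)$ is exponentially small by a two-case analysis: either the path stays inside the moat $\tilde B_1$ for the whole window, in which case the $\tfrac14\int|\na U|^2$ term in the first rewriting of the action grows like $T\inf_{\tilde B_1}|\na U|^2/4$ and one picks $T$ large enough; or the path dips below the moat to level $\leqslant U_1$ and then climbs back to $U_3$, with (sharp) cost at least $U_3-U_1>a$. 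Your sketch does not explain how the noisy trajectory is forced back to an interior compact — the deterministic gradient flow does bring $K$ inward, but the mechanism that controls the stochastic deviation over one window is precisely this $U_3$-argument, and it is the second consumer of the $(U_0-U_1)/2$ budget.

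In short: the structure, the use of a finite net with Gronwall, the windowing, and the Markov renewal are all right and match the paper; the cost estimates must use the sharp bound $\varepsilon I\geqslant\Delta U$, the halving comes from the renewal level at $(U_0+U_1)/2$, and the renewal needs the explicit ``stay in the moat or dip-and-climb'' dichotomy to close.
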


\begin{proof}
\begin{enumerate}
    \item Suppose first that $d=1$. Under Assumption~\ref{assu1}, $D=(x_1,x_2)$ for some $x_1<x_2$. Set $\mathcal N = (x_1+\theta,x_2-\theta)$ with $\theta>0$ sufficiently small so that $U'\neq 0$ on $(x_1,x_1+\theta)\cup (x_2-\theta,x_2)$ (as a consequence of Assumption~\ref{assu1}, $U$ is thus   decreasing (resp. increasing) on $(x_1,x_1+\theta)$ (resp. $(x_2-\theta,x_2)$)) and so that $\min(U(x_1+\theta),U(x_2-\theta))>c^*$ (which is possible since $\min(U(x_1),U(x_2))=U_0>c^*$). Take any $a\in (c^*,\min(U(x_1+\theta),U(x_2-\theta)))$. Fix $T_0>0$ such that $\varphi_{T_0}(x_1) >x_1 + \theta$ and $\varphi_{T_0}(x_2) < x_2 -\theta$, which is possible since $U'\neq 0$ on $D\setminus \mathcal N$. Then, $\mathcal N$ is a neighborhood of $\varphi_{T_0}(D)$, and using the uniqueness of the solution to equation~\eqref{descente} we have:
    \[\left\{\exists x\in D\setminus \mathfrak B_1, \tau_{\partial D}(X^x) <t_\varepsilon \right\} = \left\{ \tau_{\partial D}(X^{x_1+\theta}) <t_\varepsilon \right\}\cup \left\{ \tau_{\partial D}(X^{x_2-\theta}) <t_\varepsilon \right\}.\]
    We can conclude with  \cite[Chapter 6, Theorem 6.2]{Freidlin-Wentzell}.  
    
    \item We now suppose the second condition of Assumption~\ref{assu3}.  For $T>0$, let 
    \[\mathfrak B(T) = \{\varphi_t(x),\ t\in[0,T],\ x\in\partial D\}.\]
    We have that 
    \[
    \lim_{T\rightarrow\infty} \sup_{x\in\partial D} U\po \varphi_T(x)\pf = U_c,
    \]
    hence we may fix $T_0>1$ large enough so that 
    \[
    c^* < (U_0-U_1)/2,\qquad U_1 := \sup_{x\in\partial D} U\po \varphi_{T_0-1}(x)\pf.
    \]
    Set 
    \[\mathcal N = D\setminus\po \left\{ U > (U_0 + U_1)/2 \right\}\cap \mathfrak B(T_0-1)\pf\,,\]
    and fix any $a\in (c^*,(U_0-U_1)/2)$. Because $U_1 < (U_1+U_0)/2$,  this amounts to say that $\mathcal N$ is the connected component of $\{U>(U_0+U_1)/2\}$ in $D$ whose closure contains $\partial D$, and in particular $\mathcal N$ is an open set (see an illustration in Figure~\ref{Figure2} to fix ideas). Moreover, for all $x\in \overline{D}$, $\varphi_{T_0}(x) \notin \mathfrak B(T_0-1)$ (by uniqueness of solutions of ODE) so that $\mathcal N$ is a neighborhood of $\varphi_{T_0}(\overline{D})$ (as required in Assumption~\ref{assu2}).
    Fix some 
    \begin{align*}
    &U_2\in \left( a + (U_0+U_1)/2,U_0 \right), \\ &U_3\in \left(U_1+a,(U_0+U_1)/2\right),\\ &0<\gamma<\min(U_3-U_1-a,U_2-(U_0+U_1)/2-a),
    \end{align*}
    see Figure~\ref{Figure1}.

    \begin{figure}
    \centering
    \includegraphics[scale=0.5]{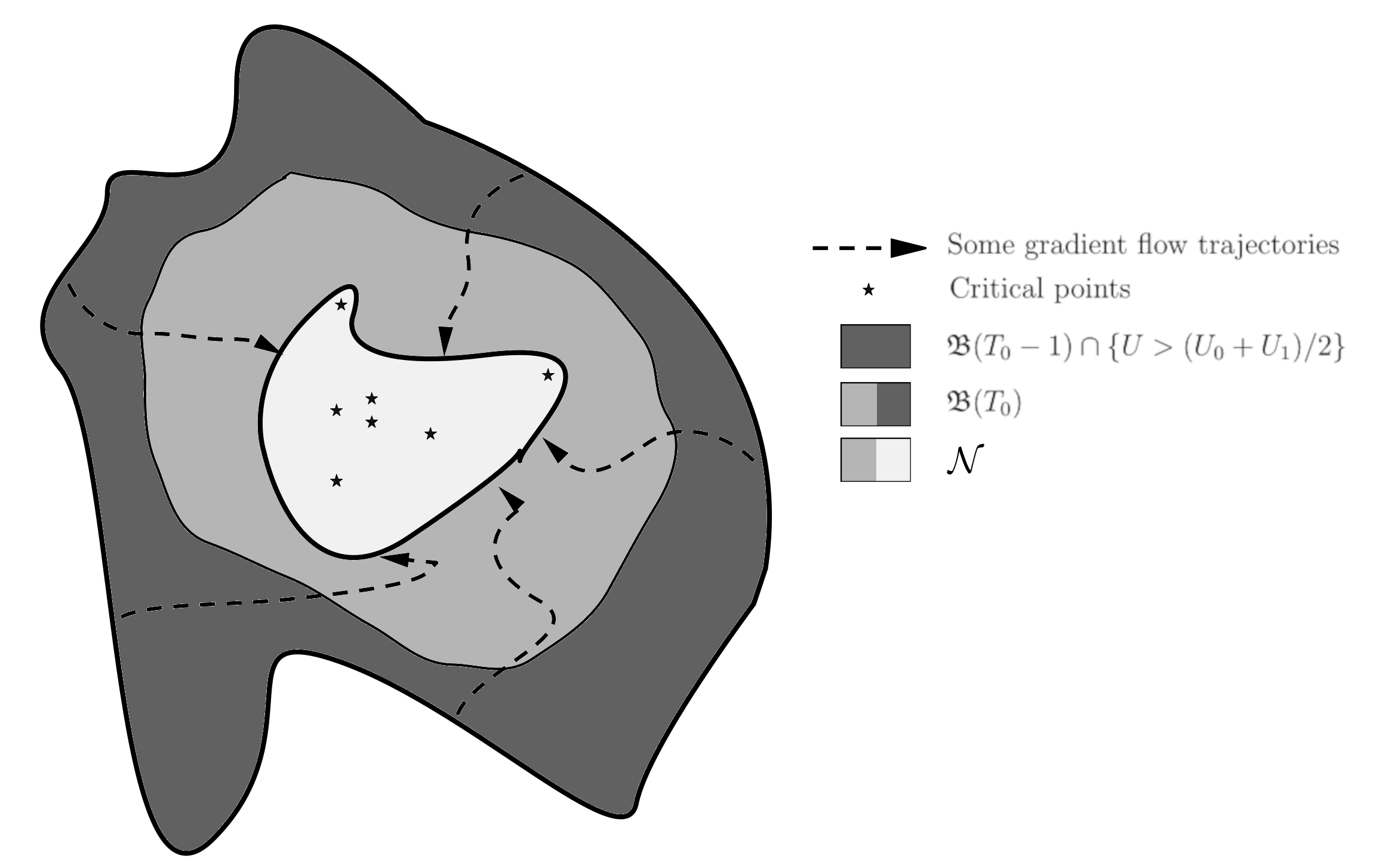}
    \caption{Sketch of the sets introduced in the proof of Lemma~\ref{unifexitevent}. In the lightest area, in the center of $D$, $U$ may have critical points and take high values, however at the boundary of this domain (which is the image of $\partial D$ by the gradient flow $\varphi_{T_0}$) the energy is relatively low (less than $U_1$). By contrast, the darkest area is a neighborhood of $\partial D$ where the energy is relatively high (larger than $(U_0+U_1)/2>U_1$).}
    \label{Figure2}
    \end{figure}
    
    \begin{figure}
    \centering
    \includegraphics[scale=0.6]{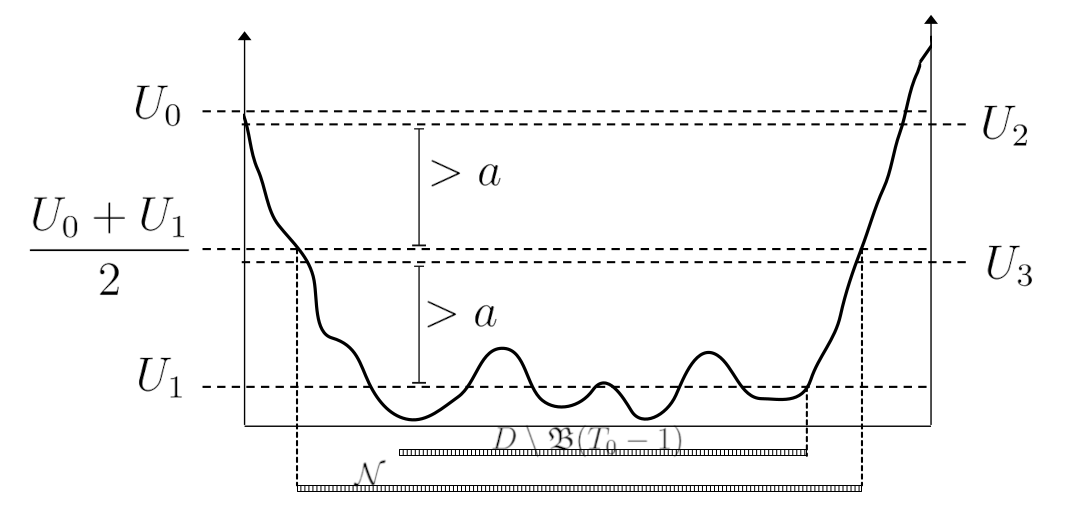}
    \caption{First, $U_0$ is the  minimal energy level on $\partial D$, and $U_1$ is the maximal energy level on $\varphi_{T_0-1}(\partial D)$ (hence at the boundary of $D\setminus \mathfrak B(T_0-1)$). The intermediate level $(U_0+U_1)/2$ is used to define $\mathcal N$. Then $U_2$ and $U_3$ are taken slightly below respectively $U_0$ and $(U_0+U_1)/2$ in such a way that there is still a gap larger than $a>c_*$ between $U_2$ and $(U_0+U_1)/2$ and between $U_3$ and $U_1$. The condition that $c^*<(U_0-U_1)/2$ means that it is possible to find $U_2,U_3$ and $a$ to fulfill these constraints.}
    \label{Figure1}
    \end{figure}
    
    Fix as well some $T>0$ (to be chosen large enough below, independently from $\varepsilon$) and for all $x\in \mathcal N$ write the event:
    \[
    \mathcal E^x = \left(\left\{U(X^x_T)<U_3 \right\} \cup \left\{ X^x_T\notin \mathcal B(T_0) \right\} \right) \cap \left\{ \forall 0<t<T, X_t^x\notin \left\{U<U_2\right\} \cap \mathcal B(T_0) \right\}.
    \]
    Let's first show that there exists $C>0$ such that $\mathbb{P}((\mathcal E^x)^c) \leqslant Ce^{-(a+\gamma)/\varepsilon}$ for $\varepsilon$ small enough and $T$ great enough, using Large Deviation results. In other words, in a fixed time interval, with high probability, starting from either a medium energy level at most $(U_1+U_0)/2$, or from the center of the domain, the process will stay away from the neighborhood of $\partial D$ where the energy is above $U_2$, and will end up either below the medium energy level $U_3$, or at the center of the domain. According to \cite[Chapter 4, Theorem 1.1]{Freidlin-Wentzell}, the action function of the process~\eqref{descente} is:
    \begin{eqnarray}
    I(\Phi) &=& \frac{1}{4\varepsilon}\int_0^T |\Phi'_s + \na U(\Phi_s)|^2\dd s \nonumber\\
    &= & \frac{1}{4\varepsilon}\int_0^T \left(|\Phi_s'|^2 + |\na U(\Phi_s)|^2\right)\dd s + \frac{U(\Phi_T)-U(\Phi_0)}{2\varepsilon}\label{Iphi1}\\
    & =& \frac{1}{4\varepsilon}\int_0^T |\Phi'_s - \na U(\Phi_s)|^2\dd s + \frac{U(\Phi_T)-U(\Phi_0)}{\varepsilon}.\label{Iphi2}
    \end{eqnarray}
    For all function $\Phi:[0,T]\to D$ such that $\Phi_0\in\mathcal N$ and there exists $t\in (0,T)$ such that $U(\Phi_t)\geqslant U_2$ and $\Phi_t\in \mathcal{B}(T_0)$, there exists $0\leqslant u < t$ such that $\Phi_u \in \po\partial \mathcal B(T_0) \pf\cap D$, and hence $U(\Phi_u) <U_1 < (U_0+ U_1)/2$, and using \eqref{Iphi2}, we have:
    \[
    \varepsilon I(\Phi)  \geqslant \frac{1}{4}\int_u^t |\Phi'_s + \na U(\Phi_s)|^2\dd s \geqslant U(\Phi_t) - U(\Phi_u) > U_2-(U_0+U_1)/2 >a+\gamma.
    \]
    From this we deduce that for $\varepsilon$ small enough, for all $x\in\mathcal N$:
    \[
    \mathbb P\left(\exists t\in [0,T], X_t^x\notin \left\{U<U_2\right\} \cap \mathcal B(T_0) \right)\leqslant e^{-(a+\gamma)/\varepsilon}.
    \]
    Second, to bound the probability that $X_T^x \in \left\{U\geqslant U_3 \right\} \cap \mathcal B(T_0)$, we consider two possible events: either the process stays during the whole interval $[0,T]$ in $\left\{ U\geqslant U_1 \right\} \cap \mathcal B(T_0)$ (which is unlikely because it would mean it stays in an unstable region where $\na U$ is non-zero), 
    or the energy of the process goes down to $U_1$ but then climbs back in a time less than $T$ to the level $U_3$ (which is also unlikely).
    More precisely, notice that $\mathfrak B(T_0)$ is a compact set where $\na U$ is non-zero, so that $|\na U|$ is uniformly bounded from below on $\left\{U\geqslant U_1\right\} \cap \mathcal B(T_0)$, and for all functions $\Phi:[0,T]\to D$ such that $\Phi_t\in \left\{U\geqslant U_1 \right\} \cap \mathcal B(T_0)$ for all $t\in[0,T]$, using \eqref{Iphi1}, we have that 
    \[\varepsilon I(\Phi) \geqslant T \inf_{\left\{U\geqslant U_1 \right\} \cap \mathcal B(T_0)}|\na U|^2/4 - U_3/2.\]
    We chose $T$ great enough so that for all such functions, $\varepsilon I(\Phi)> a+\gamma$. Then for $\varepsilon$ small enough we have that for all $x\in \left\{ U \leqslant (U_0 + U_1)/2 \right\} \cap \mathcal B(T_0)$,
    \[\mathbb P\left( \exists t\in[0,T], X^x_t\in \left\{U <U_1 \right\} \cup \mathcal B(T_0)^c \right) \geqslant 1-e^{-(a+\gamma)/\varepsilon}.\]
    Next, for all functions $\Phi:[0,T]\to D$ such that $\Phi_0\in \left\{U <U_1 \right\} \cup \mathcal B(T_0)^c$ and $\Phi_T\in\left\{U>U_3\right\}\cap \mathcal B(T_0)$, 
    \[\varepsilon I(\Phi)>U_3-U_1.\]
    Indeed, if $\Phi_0\in \mathcal B(T_0)^c$ and $\Phi_T\in \mathcal B(T_0)$, there exists $0<t<T$ such that $\Phi_t \in \partial \mathcal B(T_0)\cap D$ and $\sup_{\partial \mathcal B(T_0) \cap D} U < U_1$ so that $U(\Phi_t) <U_1$.
    Hence, for all $x$ such that $x\in \left\{U <U_1 \right\} \cup \mathcal B(T_0)^c$ and $\varepsilon$ small enough :
    \[\mathbb P(\exists t\in [0,T], U(X^x_t)> U_3) \leqslant e^{-(a+\gamma)/\varepsilon}.\]
    From those last two bounds we get for all $x\in\mathcal N$:
    \begin{align*}
        &\mathbb P\left( U(X^x_T) > U_3, X_T^x \in \mathcal B(T_0) \right) \\ &\qquad \leqslant \mathbb P\left( \exists T>s>t>0, X^x_t\in \left\{ U < U_1 \right\} \cup \mathcal B(T_0)^c, U(X^x_T)>U_3 \right)  \\&\qquad \qquad \qquad \qquad \qquad \qquad \qquad \qquad  + \mathbb{P} \left( \forall t<T, X^x_t \in \left\{ U\geqslant U_1 \right\} \cap \mathcal B(T_0) \right) \\ 
        &\qquad  \leqslant 2e^{-(a+\gamma)/\varepsilon}.
    \end{align*}
    Finally we get for all $x\in\mathcal N$:
    \[\mathbb P\left( (\mathcal E^x)^c\right) \leqslant 3e^{-(a+\gamma)/\varepsilon}.\]
    
    Up to now, we only have a control for a fixed initial condition $x$. To tackle simultaneously all initial conditions in $\mathcal N$, we use that, in a time $T$, two processes which start close stay close (deterministically). More precisely, fix 
    \[
    \delta 
    <
    \min\left(\mathrm{dist}(\left\{ U\leqslant U_2\right\}\cup \mathcal B(T_0),\R^d\setminus D),\mathrm{dist}(\left\{ U\leqslant U_3\right\} \cup \mathcal B(T_0)^c ,D\setminus \mathcal N)\right),
    \]
    and $\delta'>0$ such that $\delta'e^{\|\na^2U\|_{\infty} T}<\delta$. Fix a family of point $z_1,\dots,z_k \in\mathcal N$ such that $\mathcal N\subset \cup_{i=1}^k B(z_i,\delta')$, where $B(z,r)$ is the ball of center $z$ and radius $r$. Write the event:
    \[\mathcal E=\left\{ \forall x\in\mathcal N, \tau_{\partial D}(X^x)>T \text{ and }X^x_T\in\mathcal N \right\}.\]
    If $x\in\mathcal N$, there exists $i$ such that $|x-z_i|<\delta'$. Gronwall's lemma then classically yields that \[\sup_{0\leqslant t\leqslant T} |X^x_t-X^{z_i}_t| \leqslant \delta'e^{\|\na^2U\|_{\infty}T} <\delta.\] 
    In particular, $\tau_{\partial D}(X^x) < T$ implies that $U(X_t^{z_i}) \geqslant U_2$ for some $t\in[0,T]$. 
    Hence we have that:
    \[  \bigcap_{i=1}^k\mathcal E^{z_i}\subset \mathcal E,\]and for $\varepsilon$ small enough \[\mathbb P(\mathcal E^c)\leqslant 3ke^{-(a+\gamma)/\varepsilon}.\]
Now write
\[\mathcal E_i = \left\{\forall x\in\mathcal N,\tau_{\partial D}(X^x) >(i+1)T \text{ and }X^x_{(i+1)T} \in\mathcal N \right\}.\]
We showed that for $\varepsilon$ small enough, $\mathbb P\left( \mathcal E_{i+1}|\mathcal E_{i}\right) \geqslant 1-3ke^{-(a+\gamma)/\varepsilon}$. We also have that \[\left\{\exists x\in\mathcal N, \tau_{\partial D}(X^x)< t_\varepsilon \right\} \subset \mathcal{E}_{\left\lfloor t_\varepsilon/T\right\rfloor}^c.\]Hence:
\[\mathbb  P\left( \forall x\in\mathcal N, \tau_{\partial D}(X^x) >t_\varepsilon  \right) \geqslant 
\left(1-3ke^{-(a+\gamma)/\varepsilon}\right)^{e^{a/\varepsilon}/T},\] and thus this probability goes to $1$ as $\varepsilon$ goes to $0$. 
\end{enumerate}    
\end{proof}

\section*{Acknowledgements}

This work is supported by the projects SWIDIMS (ANR-20-CE40-0022) and CONVIVIALITY (ANR-23-CE40-0003) of the French National Research Agency.

\bibliographystyle{plain}
\bibliography{biblio}

\end{document}